\numberwithin{equation}{section}
\theoremstyle{plain}
\newtheorem{Thm}{Theorem}[section]
\newtheorem*{Thm*}{Theorem}
\newtheorem{Lem}[Thm]{Lemma}
\newtheorem{Prop}[Thm]{Proposition}
\theoremstyle{definition}
\newtheorem{Rem}[Thm]{Remark}
\newtheorem{?}[Thm]{Problem}
\newcommand{\Do}{\mathbf{D}_{0}}
\newcommand{\Dn}{\mathbf{D}_{\neq}}
\newcommand{\p}{\partial}
\newcommand{\R}{\mathbb{R}}
\newcommand{\e}{\varepsilon}
\newcommand{\rhoa}{\acute{\rho}}
\newcommand{\phim}{\mathring{\phi}}
\newcommand{\psim}{\mathring{\psi}}
\newcommand{\Torus}{\mathbb{T}}
\newcommand{\dv}{\text{div}}
\newcommand{\rhob}{\bar{\rho}}
\newcommand{\ub}{\bar{u}}
\newcommand{\mut}{\tilde{\mu}}
\newcommand{\T}{\mathcal{T}}
\newcommand{\abs}[1]{\left\lvert#1\right\rvert}
\newcommand{\norm}[1]{\left\lVert#1\right\rVert}
\begin{document}
	
	\begin{titlepage}
		\title{Nonlinear stability of planar shock wave to 3-D compressible Navier-Stokes equations in half space with Navier Boundary conditions
		}
		
		\author[]{Lin Chang$^{1}$}
		\author[]{{Lingjun Liu$^2$}
		}
		\author[]{Lingda Xu$^{3}$}
			
\affil{\begin{center}
				\footnotesize
				 \footnotesize $ ^1 $ School of Mathematics and Physics, Handan University, Handan 056006, P.R.China,\\
			\vspace{0.07cm} 
			 Email: changlin23@buaa.edu.cn (L. Chang).\\
		 \vspace{0.07cm}
				$ ^2 $ College of Mathematics, Faculty of Science, Beijing University of Technology, Beijing 100124, P.R.China,\\
				\vspace{0.07cm} 
				 Email: lingjunliu@bjut.edu.cn (L. Liu).\\
		 \vspace{0.07cm}
		  $ ^3 $ Department of Applied Mathematics, The Hong Kong Polytechnic University, Hong Kong, P.R.China,\\
		\vspace{0.07cm} 
		 Email: lingda.xu@polyu.edu.hk (L. Xu).
		\end{center}}
		\date{}\end{titlepage}
\maketitle
\begin{abstract}
In this paper, we consider the large time behavior of planar shock wave for 3-D compressible isentropic Navier-Stokes equations (CNS) in half space. Providing the strength of the shock wave and initial perturbations are small, we proved the planar shock wave for 3-D CNS is nonlinearly stable in half space with Navier boundary condition.
 The main difficulty comes from the compressibility of shock wave, which leads to lower order terms with bad sign, see the third line in \cref{C17}. We apply a decomposition of the solution into zero and non-zero modes: we take the anti-derivative for the zero mode and obtain the space-time estimates for the energy of perturbation itself. Then combining the fact that the Poincar\'e inequality is available for the non-zero mode, we have successfully controlled the lower order terms with bad sign in \cref{C17}. To overcome the difficulty that comes from the boundary, we introduce the two crucial estimates on boundary \cref{CLem0} and fully utilize the property of Navier boundary conditions, which means that the normal velocity is zero on the boundary and the fluid tangential velocity is proportional to the tangential component of the viscous stress tensor on the boundary. Finally, the nonlinear stability is proved by the weighted energy method.

\end{abstract}

{\bf{Keywords.}} 3-D Navier-Stokes equations, planar shock wave, nonlinear stability, Navier boundary conditions.

{\bf{Mathematics Subject Classification.}} 
35Q30, 76L05, 76N06.

\section{Introduction and Main Result}
In this paper, we study the three-dimensional (3-D) compressible isentropic Navier-Stokes equations,
\begin{align}\label{NS}
\left\{\begin{aligned}
	&\rho_{t}+\dv (\rho{\bf u})=0,\\
	&(\rho{\bf u})_{t}+\dv(\rho{\bf u}\otimes{\bf u}
	)+\nabla p(\rho)=\mu\Delta {\bf u}+(\mu+\lambda)\nabla\dv {\bf u},
\end{aligned}\right.
\end{align}
where $t \geq 0$ is the time variable and $ x
=\left( {x_1}
, x'\right)=\left(x_{1}, x_{2}, x_{3}\right) \in\Omega\subset \mathbb{R}^{3}$ is the spatial variable and the half space domain $\Omega:=\mathbb R^+\times\mathbb T^2=\{x:x_1>0,x'\in\mathbb T^2\}$ with $\mathbb R^+$ being a real half line and $\mathbb T^2:=(\mathbb R/\mathbb Z)^2$ being a two-dimensional unit flat torus. The functions $\rho,  {\bf u}=\left(u_{1}, u_{2}, u_{3}\right)^{t},$ and $p(\rho)=a\rho^\gamma(a>0, \gamma>1)$ 
represent the fluid density, velocity, and pressure, respectively. 
The viscous coefficients $\mu$ and $\lambda$ are constants and satisfy the physical constraints
$$\mu>0,\ \ \ 2\mu+3\lambda\ge0.$$
For the proof of this paper, we only need the weaker restriction $\mu+\lambda\geq 0$.
 We consider the initial-boundary value problem of the system \eqref{NS} with the following initial and boundary conditions
\begin{align}\label{eq1}
 &(\rho,{\bf u})\big|_{t=0}=(\rho_0,{\bf u}_0)(x)\rightarrow(\rho_+,{\bf u}_+)
 ,\ \ \ x_1\rightarrow+\infty,\\[2mm]
 \label{eq2}
 &
 (u_1,u_2,u_3)(0,x')=k(x')\left(0,\p_{1}u_2(0,x'),\p_{1} u_3(0,x')\right), \ \ \ x_1=0,
 \end{align}
where $\rho_+>0$ and ${\bf u}_+=(u_{1+},0,0)^t$ with $u_{1+}$ are constant states. And \cref{eq2} is called the Navier boundary conditions, 
where $\p_1:=\p_{x_1}$, and $k(x')$ is periodic smooth function 
and there exist some positive constants $\underline k, \bar k$ such that
$$\underline k\le k(x')\le\bar k,\ \ \ \ x'=(x_2,x_3)\in\mathbb T^2.$$

It is known that the CNS \cref{NS} has {a} close relationship with the corresponding 3-D compressible isentropic Euler equations, which read
 \begin{align}\label{Euler}
\left\{\begin{aligned}
	&\rho_{t}+\dv (\rho{\bf u})=0,\\
	&(\rho{\bf u})_{t}+\dv(\rho{\bf u}\otimes{\bf u}
	)+\nabla p(\rho)=0.
\end{aligned}\right.
\end{align}
The Euler system is a typical hyperbolic system, one of the main features of which is no matter how small or smooth the initial data is, the shock will formulate. 
\cref{Euler} admits rich wave phenomena such as shock waves, rarefaction waves, and so on. Thus,
we study the Riemann problem which is proposed by Riemann \cite{R} to learn the wave phenomena. That is, we study \eqref{Euler} with the planar Riemann initial data
\begin{align}\label{eq3}
(\rho,{\bf u})(t=0,x)=(\rho_0,{\bf u}_0)(x_1)=\left\{\begin{aligned}
&(\rho_-,{\bf u}_-),\ \ x_1<0,\\
&(\rho_+,{\bf u}_+), \ \ x_1>0.
\end{aligned}\right.
\end{align}
Specifically in this paper, $(\rho_+,{\bf u}_+)$ is given in \eqref{eq1}, ${\bf u}_-=0$ is known by Navier boundary condition \eqref{eq2}, and $\rho_-$ is uniquely determined by the corresponding wave curve, which will be introduced in \cref{Euler2,eq5}.

The nonlinear stability of Navier-Stokes equation waves has been studied extensively. For the 1-d case, we refer to \cite{C,G, MN1} for the stability of shock waves, \cite{MN2} for the stability of rarefaction waves, and \cite{HMX} for the decay rate of contact waves. In particular, due to the application of the anti-derivative technique, the results of \cite{G,HMX,MN1} all require the so-called zero-mass condition. \cite{LZ} used the Green function method to prove the stability of a single shock wave that does not require a zero-mass condition. Moreover, they obtained pointwise estimates of the shock waves and their method can be applied to systems with more general physical viscosity. And \cite{HM} proved the case of the composite wave consisting of two shock waves for CNS. \cite{HXY} also obtained the decay rate of contact waves that do not require zero mass conditions. And the optimal decay rate of contact wave was obtained by \cite{LWX} for  both zero mass and non-zero mass cases.  In addition, the cases of combinations of multiple wave patterns are also very interesting and difficult. \cite{HLM} obtains the stability of the superposition of the contact wave and the rarefaction wave by applying an estimate of the heat kernel. We refer to \cite{KVW1,KVW2} for results about the stability of combinations of multiple wave patterns.

For the m-d Navier-Stokes equations, we refer to \cite{HXY,LWWarma} for the stability of rarefaction wave. For the stability of the shock wave, we refer to \cite{WW,Yuan}. Recently, an interesting result on the stability of the vortex sheet is proved in \cite{HXXY}.
For the nonlinear stability of the wave patterns to the CNS in the half space, we refer to \cite{M} and the reference therein and thereafter. For the inflow problem, we refer to \cite{HMS,MN3,QW}, for the outflow problem, we refer to \cite{HQ,KNZ}, and for the impermeable wall case we refer to \cite{MM1}. For the stability of the rarefaction wave for the m-d CNS in half space. we refer to \cite{Wang,WW2021CPAA}.




Before stating our main result, we recall the planar shock wave to \eqref{Euler}-\eqref{eq3}. Let $(\rho^s, u_1^s)=(\rho^s, u_1^s)(t, x_1)$ be a weak entropy solution of the following 1-D compressible isentropic Euler equations
 \begin{align}\label{Euler1}
\left\{\begin{aligned}
	&\rho^s_{t}+\p_1 (\rho^s  u_1^s)=0,\\
	&(\rho^s  u_1^s)_{t}+\p_1(\rho^s( u_1^s)^2
	)+\p_1 p(\rho^s)=0,\ \ \ \ x_1\in\mathbb R,\ \ t>0,
\end{aligned}\right.
\end{align}
with the initial data 
\begin{align}\label{eq4}
(\rho^s,{ u_1^s})(t=0,x_1)=(\rho^s_0, u^s_{10})(x_1)
=\left\{\begin{aligned}
&(\rho_-, u_{1-}),\ \ x_1<0,\\
&(\rho_+,{ u}_{1+}), \ \ x_1>0,
\end{aligned}\right.
\end{align}
where $(\rho_\pm, u_{1\pm})$ 
satisfying the Rankine-Hugoniot conditions,
 \begin{align}\label{Euler2}
\left\{\begin{aligned}
	&-s(\rho_+-\rho_-)+m_{1+}-m_{1-}=0,\\
	&-s(m_{1+}-m_{1-})+\frac{(m_{1+})^2}{\rho_+}-\frac{(m_{1-})^2}{\rho_-}+p(\rho_+)-p(\rho_-)=0,
\end{aligned}\right.
\end{align}
where $m_{1\pm}=\rho_\pm u_{1\pm},$ and $s$ is the speed of shock.
The system \eqref{Euler1} has two eigenvalues $$\lambda_1(\rho,u_{1})=u_1-\sqrt{p'(\rho)}, \ \ \ \lambda_2(\rho,u_{1})=u_1+\sqrt{p'(\rho)}.$$
We consider the 2-shock $(\rho^s,u_1^s)$  satisfying the Lax's entropy condition,
\begin{align}\label{eq5}
\lambda_2(\rho_+,u_{1+})<s<\lambda_2(\rho_-,u_{1-}) \ \ \text{and}\ \ s<\lambda_1(\rho_-,u_{1-}).
\end{align}

In this paper, we consider the viscosity effect, the Lax shock wave is smoothed out to be a viscous version, $(\bar\rho,\bar u_1)=(\bar\rho,\bar u_1)(x_1-st+\alpha)$ is a traveling wave solution of the following 1-D compressible isentropic Navier-Stokes equations
 \begin{align}\label{NS1}
\left\{\begin{aligned}
	&-s(\bar\rho)'+(\bar\rho\bar u_1)'=0,\\
	&-s(\bar\rho\bar u_{1})'+\left(\bar\rho(\bar u_1)^2+p(\bar\rho)\right)'=\tilde\mu(\bar u_1)'',\\
	&(\bar\rho,\bar u_1)(x_1)\rightarrow(\rho_\pm,u_{1\pm}),\ \ \ \text{as} \ \ x_1\rightarrow\pm\infty,
\end{aligned}\right.
\end{align}
where $\alpha $ is the shift, $\tilde\mu:=2\mu+\lambda$ and $':=\frac{d}{dx_1}.$
The planar viscous shock
\begin{align}\label{eq6}
(\bar\rho,\bar{\bf u})(x_1-st+\alpha)=(\bar\rho,\bar u_1,0,0)(x_1-st+\alpha)
\end{align}
is a traveling wave solution of \eqref{NS} and propagating along the $x_1$-axis with the shock speed $s$.


Let the perturbation be defined as
\begin{align}\label{eq18}
\begin{aligned}
&\qquad\phi(t,x):=\rho(t,x)-\bar{\rho}(x_1-st+\alpha),\\
&{\psi}(t, x)=({\psi}_1,\psi_2,\psi_3)^t:={\bf m} (t, x)-\left({\bar{m}_1}(x_1-st+\alpha),0,0\right)^t,
\end{aligned}
\end{align}
where ${\bf m}=(m_1,m_2,m_3)^t=\rho{\bf u}, {\bf \bar m}=(\bar m_1,\bar m_2,\bar m_3)^t=\bar\rho{\bf \bar u}$. Inspired by \cite{Yuan}, we decompose the perturbation $(\phi,\psi)$ into principal and transversal parts. 
 Denote one-dimensional zero {modes} of perturbation {as}
\begin{align}\label{eq20}
 \phim=\int_{\mathbb T^2}\phi dx_2dx_3,\ \ \ \psim=\int_{\mathbb T^2}\psi dx_2dx_3,
 \end{align}
and multi-dimensional non-zero modes as
 \begin{align}
 \acute\phi=\phi-\int_{\mathbb T^2}\phi dx_2dx_3,\ \ \ \acute\psi=\psi-\int_{\mathbb T^2}\psi dx_2dx_3.
 \end{align}
 Set the anti-derivative variables as follows,
 \begin{align}\label{eq19}
(\Phi,\Psi)(t,x_1)=-\int_{x_1}^{+\infty}(\phim,\psim_1)(t,y_1)dy_1,\ \  x_1\in\mathbb R^+,t\ge0.
\end{align}

Now we 
state the main result as follows.
\begin{Thm}\label{thm1}
Let $(\bar\rho,\bar{\bf u})(x_1-st+\alpha)$ be the planar shock wave defined in \eqref{eq6}. Then there exist constants $\delta_0>0$ and $\e_0>0$ such that if $\delta:=|\rho_+-\rho_-|\le\delta_0$ and
\begin{align}\label{eq7}
\e:=\|\left(\rho_0-\bar\rho(x_1+\alpha),{\bf u}_0-\bar{\bf u}(x_1+\alpha)\right)\|_{H^2(\Omega)}+\|(\Phi_0,\Psi_0)\|_{L^2(\Omega)}\le\e_0,
\end{align}
then 3-D initial and boundary value problem \eqref{NS}-\eqref{eq2} admits a unique global solution $(\rho,{\bf u})$ satisfying
\begin{align}\label{eq8}
\begin{aligned}
&{\left(\rho-\bar\rho,{\bf u}-\bar{\bf u}\right)\in C\left(0,+\infty;L^2(\Omega)\right), \ \ \ \nabla(\rho,{\bf u})\in C\left(0,+\infty);H^1(\Omega)\right),}\\
&\qquad\quad{\nabla^2\rho\in L^2\left(0,+\infty;L^2(\Omega)\right),\ \ \ \nabla^2{\bf u}\in L^2\left(0,+\infty;H^1(\Omega)\right)},
\end{aligned}
\end{align}
and the time-asymptotic stability toward the planar shock wave $(\bar\rho,\bar{\bf u})(x_1-st+\alpha)$ holds
\begin{align}\label{eq9}
\lim_{t\rightarrow+\infty}\sup_{x\in\Omega}\left|(\rho,{\bf u})(t,x)-(\bar\rho,\bar{\bf u})(x_1-st+\alpha)\right|=0.
\end{align}
\end{Thm}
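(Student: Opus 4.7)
The plan follows the standard continuation argument for viscous conservation laws: combine a short-time existence result in $H^2$ (routine for systems of mixed hyperbolic-parabolic type) with an a priori estimate for the perturbation $(\phi,\psi)$ of \cref{eq18} in a suitable weighted Sobolev norm, so that the local solution extends globally within the smallness regime. The substance of the proof is therefore the a priori estimate, which I would split between transverse zero and non-zero modes per the decomposition \cref{eq20}, following the Yuan-type strategy for planar shocks.

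For the zero mode $(\mathring\phi,\mathring\psi_1)$, averaging the perturbation equations over $\mathbb{T}^2$ produces an effectively 1-D half-space system whose residual source is proportional to $\bar\rho'$ and has no useful sign, so direct energy estimates fail. I would therefore pass to the anti-derivative $(\Phi,\Psi)$ of \cref{eq19}, with the shift $\alpha$ chosen so that $(\Phi,\Psi)|_{t=0}$ lies in $L^2(\mathbb{R}^+)$ as required by \cref{eq7}, and run the Matsumura-Nishihara energy identity. In the half-space setting no full-line zero-mass condition is needed because the anti-derivative integrates only up to $+\infty$; the output is control of $\|(\Phi,\Psi)\|_{H^1}^2$ together with the dissipation $\|\Psi_{x_1}\|_{L^2}^2 + \int_0^\infty |\bar\rho'|\, \Phi^2\, dx_1$. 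The transverse zero components $\mathring\psi_2,\mathring\psi_3$ obey effectively parabolic equations and are handled by direct estimates.

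For the non-zero mode $(\acute\phi,\acute\psi)$ the Poincaré inequality $\|\acute f\|_{L^2(\mathbb{T}^2)} \le C\|\nabla_{x'}\acute f\|_{L^2(\mathbb{T}^2)}$ is available, so I would estimate $(\acute\phi,\acute\psi)$ directly in $H^2(\Omega)$ without anti-derivatives. This is precisely the mechanism that absorbs the bad lower-order terms highlighted in the third line of \cref{C17}: those terms are quadratic in $(\acute\phi,\acute\psi)$ with coefficient of size $|\bar\rho'|=O(\delta)$, and Poincaré trades them against the transverse-derivative part of the viscous dissipation as soon as $\delta$ is small. Higher-order dissipation in $(\acute\phi,\acute\psi)$, including effective control of $\nabla\phi$, is obtained by differentiating the system and recovering $\nabla\phi$ from the momentum equation in the usual viscous-compressible way.

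The principal obstacle is the boundary $\{x_1=0\}$: every integration by parts in $x_1$ produces a trace that must be absorbed into the dissipation, and this is where the structure of the Navier condition \cref{eq2} is essential and exactly what \cref{CLem0} provides. The impermeability $u_1|_{x_1=0}=0$ kills normal-momentum fluxes outright. For the tangential components, the slip law $u_j = k(x')\partial_1 u_j$, $j=2,3$, converts boundary traces of $\partial_1\psi_j$ into boundary traces of $\psi_j$, which are then dominated by interior norms via the trace interpolation $\|f\|_{L^2(\partial\Omega)}^2 \le C\|f\|_{L^2(\Omega)}\|\partial_1 f\|_{L^2(\Omega)}$ and Young's inequality, yielding terms that are either part of the dissipation or already under control. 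Once these boundary terms are absorbed and the smallness of $\delta$ and $\varepsilon$ is used to handle the nonlinearities, a Gronwall argument closes the a priori estimate; this produces the regularity \cref{eq8}, and the uniform decay \cref{eq9} then follows by the standard time-integrability argument from $\nabla(\phi,\psi) \in L^2_t L^2_x$ combined with the Sobolev embedding $H^2(\Omega) \hookrightarrow L^\infty(\Omega)$ in 3-D.
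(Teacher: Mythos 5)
Your plan matches the paper's strategy essentially exactly --- continuation from local existence, anti-derivatives $(\Phi,\Psi)$ with the $1/\bar w$ weight for the zero mode, Poincar\'e in $x'$ for the non-zero mode, and the Navier condition converted into good-signed boundary dissipation $\frac{\mu}{k}|\zeta'|^2|_{x_1=0}$ together with traces absorbed via the one-dimensional interpolation $\|f\|_{L^\infty(\mathbb R^+)}\le\sqrt2\|f\|^{1/2}\|\partial_1 f\|^{1/2}$ as in \cref{CLem0}. The one small correction concerns the role of $\alpha$: the paper fixes it not to secure $(\Phi_0,\Psi_0)\in L^2(\mathbb R^+)$ (that is the hypothesis \eqref{eq7}), but so that $I(\alpha)=0$ and hence $\Phi(t,0)=A(t)=-\int_t^\infty\bar{\bf m}(-s\tau+\alpha)\,d\tau$ decays exponentially in $t$, which is precisely what makes the boundary integrals in \cref{CLem0} and \eqref{C25} summable in time.
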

\begin{Rem}\label{rem1}
To the best of our knowledge, \cref{thm1} is the first result considering the stability of shock wave for m-d CNS in half space. The case of inflow and outflow {problems} will be studied in the forthcoming paper.
\end{Rem}

The rest of this paper is arranged as follows. We will present the planar shock wave's properties and some useful lemmas in section \ref{sc1}. In section \ref{sc2}, {the detailed perturbation systems are given by decomposition techniques and anti-derivative methods. Then a priori estimates are obtained by 
 energy method in section \ref{sc3}.} The proof of Theorem \ref{thm1} is completed by energy methods in section \ref{sc4}. 

Note that denote $\|\cdot\|:=\|\cdot\|_{L^2(\Omega)},\ \|\cdot\|_{L^\infty}:=\|\cdot\|_{L^\infty(\Omega)},\ \|\cdot\|_{L^6}:=\|\cdot\|_{L^6(\Omega)},\ \|\cdot\|_{L^3}:=\|\cdot\|_{L^3(\Omega)}, \ \|\cdot\|_{H^l}:=\|\cdot\|_{H^l(\Omega)}$ for $l\ge1.$ For simplicity, throughout the paper we write C as some generic positive constants that are independent of time $t$ or $\tau$ and the wave {strength $\delta$}.

\section{Preliminaries}\label{sc1}
In this section, we present some important and useful lemmas to prepare for the following sections. 
First, we list the following properties of the viscous shock profile $(\bar\rho,\bar u_1).$
\begin{Lem}\label{lem1}\cite{MN1}
Assume that \eqref{Euler2}-\eqref{eq5} hold.
The viscous shock $(\bar\rho,\bar u_1)(x_1-st+\alpha)$ is unique and smooth solution of \eqref{NS1}, and satisfies the following properties,

\
i) $(\bar\rho)'(x_1)<0$ and $(\bar u_1)'(x_1)<0$ for all $x_1\in\mathbb R$;

\
ii) $\delta^2e^{-c_1\delta|x_1|}\le|(\bar u_1)'(x_1)|\le\delta^2e^{-c_2\delta|x_1|}$ for all $x_1\in\mathbb R$, 

\ \ \ \ where $c_1>c_2$ are two positive constants, independent of $\delta$ and $x_1$;

\
iii) $|(\bar u_1)''(x_1)|\le \delta|(\bar u_1)'(x_1)|$ for all $x_1\in\mathbb R.$

\end{Lem}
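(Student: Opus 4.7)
The plan is to reduce the traveling--wave ODE system \cref{NS1} to a single scalar first--order ODE and then analyze it via phase--plane arguments plus linearization at the rest points. Integrating the mass equation of \cref{NS1} from $-\infty$ to $x_1$ yields $\bar\rho(\bar u_1 - s) = \rho_-(u_{1-}-s) =: m_*$. The Lax condition \cref{eq5} ensures $\bar u_1 - s$ stays of one sign along the profile, so $\bar\rho = m_*/(\bar u_1 - s)$ is a smooth monotone function of $\bar u_1$. Substituting into the momentum equation of \cref{NS1} and integrating once gives the scalar autonomous ODE
\begin{equation*}
\tilde\mu\, \bar u_1'(x_1) = h(\bar u_1(x_1)), \qquad h(u) := m_*(u-u_{1-}) + p(\bar\rho(u)) - p(\rho_-),
\end{equation*}
and the Rankine--Hugoniot jump \cref{Euler2} guarantees $h(u_{1-}) = h(u_{1+}) = 0$.

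For (i), a direct computation gives $h'(u_{1\pm}) = \rho_\pm\bigl[(u_{1\pm}-s)^2 - p'(\rho_\pm)\bigr]/(u_{1\pm}-s)$. The Lax condition \cref{eq5} forces $(u_{1-}-s) > \sqrt{p'(\rho_-)} > 0$ (whence $h'(u_{1-}) > 0$) and $(u_{1+}-s) < -\sqrt{p'(\rho_+)} < 0$ together with $(u_{1+}-s)^2 > p'(\rho_+)$ (whence $h'(u_{1+}) < 0$). Since $h$ vanishes at both endpoints with opposite signs of $h'$, for $\delta$ small it admits no additional zero between $u_{1+}$ and $u_{1-}$; hence $h$ has a definite sign there, and the ODE forces $(\bar u_1)' < 0$ strictly on $\mathbb{R}$. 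Monotonicity of $\bar\rho$ then follows from the monotone algebraic relation $\bar\rho = m_*/(\bar u_1-s)$.

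For (ii) the crux is the $\delta$--scaling of the linearized eigenvalues at $u_{1\pm}$. Factoring $(u_{1\pm}-s)^2 - p'(\rho_\pm) = [(u_{1\pm}-s)-\sqrt{p'(\rho_\pm)}]\,[(u_{1\pm}-s)+\sqrt{p'(\rho_\pm)}]$ and Taylor expanding the Rankine--Hugoniot relations in $\delta$ (at $\delta = 0$ one has $s = u_{1-}+\sqrt{p'(\rho_-)}$), exactly one factor vanishes to first order in $\delta$ while the other tends to a nonzero limit. Consequently $h'(u_{1\pm}) = \mp c_\pm \delta + O(\delta^2)$ with $c_\pm > 0$ uniform in $\delta$. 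The linearized ODE near $+\infty$ then gives $\bar u_1(x_1)-u_{1+} \sim A_+ e^{\mu_+ x_1}$ with $\mu_+ = h'(u_{1+})/\tilde\mu < 0$ and $|\mu_+| \simeq \delta$, and analogously at $-\infty$ with positive eigenvalue $\mu_- \simeq \delta$. Since $|\bar u_1-u_{1\pm}| = O(\delta)$ uniformly on the profile, the ODE $\bar u_1' = h(\bar u_1)/\tilde\mu$ combined with $h(u) \approx h'(u_{1\pm})(u-u_{1\pm})$ near the rest points yields $|(\bar u_1)'(x_1)| \simeq \delta \cdot \delta\, e^{-c\delta|x_1|} = \delta^2 e^{-c\delta|x_1|}$, producing both inequalities in (ii) with constants $c_1 > c_2$ absorbing the higher--order corrections away from the rest points.

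Finally, (iii) is immediate from differentiating the scalar ODE: $\tilde\mu (\bar u_1)'' = h'(\bar u_1) (\bar u_1)'$. Because the profile lies in an interval of length $|u_{1-}-u_{1+}| = O(\delta)$ and $h'$ is continuous with $h'(u_{1\pm}) = O(\delta)$, one has $|h'(\bar u_1)| \le C\delta$ uniformly along the profile, so $|(\bar u_1)''| \le C\delta|(\bar u_1)'|$. The only genuinely subtle step in the whole argument is the $O(\delta)$ scaling of $h'(u_{1\pm})$, which is the small--shock expansion of the Rankine--Hugoniot jump; this is the technical heart of \cite{MN1}, and once it is in hand items (ii) and (iii) are mechanical.
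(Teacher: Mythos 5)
The paper does not prove \cref{lem1} itself; it cites \cite{MN1}. Your reconstruction --- integrate the mass equation to get $\bar\rho(\bar u_1-s)=m_*$, substitute into the momentum equation to obtain the scalar autonomous ODE $\tilde\mu\,\bar u_1' = h(\bar u_1)$ with $h(u_{1\pm})=0$, and read off (i)--(iii) from the sign of $h$ together with the $O(\delta)$ size of the eigenvalues $h'(u_{1\pm})$ at the two rest points --- is the standard Matsumura--Nishihara route, and the overall architecture is sound.

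There is, however, a concrete error in your step for (i). You read \eqref{eq5} literally and deduce $u_{1-}-s > \sqrt{p'(\rho_-)} > 0$, while also concluding $u_{1+}-s < -\sqrt{p'(\rho_+)} < 0$. But then $\rho_-(u_{1-}-s)>0$ and $\rho_+(u_{1+}-s)<0$ cannot both equal the single mass flux $m_*$, which contradicts your own opening observation that $\bar u_1-s$ keeps one sign along the profile. The Lax admissibility condition for a $2$-shock should read $\lambda_1(\rho_-,u_{1-}) < s$ (so \eqref{eq5} as printed appears to contain a sign typo). With the correct inequality one gets $-\sqrt{p'(\rho_-)} < u_{1-}-s < 0$, hence $\bar u_1-s<0$ along the whole profile and $m_*<0$; your formula $h'(u_{1\pm})=\rho_\pm\big[(u_{1\pm}-s)^2-p'(\rho_\pm)\big]/(u_{1\pm}-s)$ then still yields $h'(u_{1-})>0$ (now from $(u_{1-}-s)^2<p'(\rho_-)$ and $u_{1-}-s<0$) and $h'(u_{1+})<0$, so the desired monotonicity survives, but the route you wrote down does not. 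Two smaller remarks: for $p=a\rho^\gamma$ the function $h$ is strictly convex on the relevant interval, which gives ``no interior zero'' outright, with no appeal to smallness of $\delta$; and your argument for (iii) actually produces $|\bar u_1''|\le (C/\tilde\mu)\,\delta\,|\bar u_1'|$ rather than the constant-free bound stated in the lemma --- harmless in every application, but not literally what you set out to show.
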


\vspace{0.3cm}
Since ${\bf u}_-=0 $ in this paper, combining with Lemma \ref{lem1} and \eqref{NS1}, one gets that
\begin{equation} \label{C7}
|\bar u_1|\le   C \delta.  
\end{equation}

The following Gagliardo-Nirenberg type inequality plays an important role in our proof of the main results.

\begin{Lem}
\label{Lem-GN}\cite{WW2021CPAA}
Assume that $g(x)\in H^2(\Omega)$, then there exists some generic constant $C$ such that it holds
\begin{align}\label{eq13}
\|g\|_{L^\infty(\Omega)}\le \sqrt{2}\|g\|^{\frac{1}{2}}_{L^2(\Omega)}\|\nabla g\|^{\frac{1}{2}}_{L^2(\Omega)}+C\|\nabla g\|^{\frac{1}{2}}_{L^2(\Omega)}\|\nabla^2 g\|^{\frac{1}{2}}_{L^2(\Omega)}.
\end{align}

	\end{Lem}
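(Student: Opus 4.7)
The plan is to decompose $g$ into its $x'$-average $\bar g(x_1):=\int_{\mathbb T^2}g(x_1,x')\,dx'$ and the fluctuation $\tilde g:=g-\bar g$, and to estimate the two pieces by different interpolation arguments that produce the two terms on the right-hand side of \eqref{eq13}. Since $|\mathbb T^2|=1$, Cauchy--Schwarz in $x'$ immediately gives $\|\bar g\|_{L^2(\mathbb R^+)}\le\|g\|_{L^2(\Omega)}$ and $\|\bar g'\|_{L^2(\mathbb R^+)}\le\|\nabla g\|_{L^2(\Omega)}$, while $\tilde g(x_1,\cdot)$ has zero mean on $\mathbb T^2$ for every $x_1$, so the cross-section Poincar\'e inequality will be available.

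For the zero mode $\bar g$, which is a one-dimensional function on the half-line decaying at infinity, the identity $\bar g(x_1)^2=-2\int_{x_1}^{+\infty}\bar g(s)\bar g'(s)\,ds$ followed by Cauchy--Schwarz produces the sharp half-line bound $\|\bar g\|_{L^\infty(\mathbb R^+)}\le\sqrt 2\,\|\bar g\|_{L^2(\mathbb R^+)}^{1/2}\|\bar g'\|_{L^2(\mathbb R^+)}^{1/2}$, and combining this with the inequalities of the previous step yields exactly the first summand on the right of \eqref{eq13} with its sharp constant. For the fluctuation $\tilde g$ I would invoke the three-dimensional Agmon-type inequality $\|f\|_{L^\infty(\Omega)}^2\le C\|f\|_{H^1(\Omega)}\|f\|_{H^2(\Omega)}$, valid on the Lipschitz domain $\Omega=\mathbb R^+\times\mathbb T^2$ via Stein extension and the embedding $H^2(\mathbb R^3)\hookrightarrow L^\infty(\mathbb R^3)$ combined with interpolation between $H^1$ and $H^2$. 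Applied to $\tilde g$, the Poincar\'e inequality on each slice $\{x_1\}\times\mathbb T^2$, used once on $\tilde g$ itself and once on each component of $\nabla\tilde g$ (all of which are zero-mean in $x'$), absorbs the lower-order $L^2$ norms into derivatives of $g$, giving $\|\tilde g\|_{H^1(\Omega)}\le C\|\nabla g\|_{L^2(\Omega)}$ and $\|\tilde g\|_{H^2(\Omega)}\le C\|\nabla^2 g\|_{L^2(\Omega)}$. Inserting these into the Agmon inequality supplies $\|\tilde g\|_{L^\infty(\Omega)}\le C\|\nabla g\|_{L^2(\Omega)}^{1/2}\|\nabla^2 g\|_{L^2(\Omega)}^{1/2}$, which is the second summand.

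The hard part is not the estimate itself but retaining the explicit constant $\sqrt 2$ in front of the first term, since this precise coefficient will matter when tracking smallness parameters in the closure of the nonlinear estimates. A direct three-dimensional interpolation of $g$ would give the correct dimensional scaling but with a worse constant, so the zero-mode/non-zero-mode decomposition is dictated by this sharpness requirement rather than being merely cosmetic. The fluctuation piece is comparatively routine since it is allowed to carry the generic constant $C$, and the $\mathbb T^2$-Poincar\'e inequality does all the absorption of $L^2$-norms into norms of $\nabla g$ and $\nabla^2 g$. Summing the two bounds then yields \eqref{eq13}.
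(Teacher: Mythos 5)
The paper does not prove this lemma; it is imported from \cite{WW2021CPAA}, and the commented-out block in the source reveals that the reference establishes a more general Gagliardo--Nirenberg family on $\mathbb{R}^+\times\mathbb{T}^2_\Lambda$ by writing $u=u^{(1)}+u^{(2)}+u^{(3)}$ with each $u^{(k)}$ obeying the $k$-dimensional G--N inequality, \cref{eq13} then being a specialization. Your route is genuinely different and simpler for this particular inequality: a two-term split into the $x'$-average $\bar g$ and its zero-mean fluctuation $\tilde g$. The estimates you give for each piece check out. The half-line identity $\bar g(x_1)^2=-2\int_{x_1}^\infty\bar g\,\bar g'\,ds$ together with $\|\bar g\|_{L^2(\mathbb R^+)}\le\|g\|_{L^2(\Omega)}$ and $\|\bar g'\|_{L^2}\le\|\nabla g\|_{L^2(\Omega)}$ gives the first summand with the advertised $\sqrt2$. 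The slice-wise Poincar\'e inequality on $\mathbb T^2$, applied first to $\tilde g$ and then to every component of $\nabla\tilde g$ (all zero-mean in $x'$, including $\partial_1\tilde g=\partial_1 g-\bar g'$), correctly yields $\|\tilde g\|_{H^1(\Omega)}\le C\|\nabla g\|_{L^2(\Omega)}$ and $\|\tilde g\|_{H^2(\Omega)}\le C\|\nabla^2 g\|_{L^2(\Omega)}$, so an Agmon bound for $\tilde g$ supplies the second summand. Your two-piece decomposition buys simplicity and a clean sharp constant on the zero mode; the reference's three-piece decomposition buys a whole scale of $L^p$ estimates at once, which this paper does not actually need.

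One step deserves tightening: the Agmon inequality $\|f\|_{L^\infty(\Omega)}^2\le C\|f\|_{H^1(\Omega)}\|f\|_{H^2(\Omega)}$ that you invoke for $\tilde g$. The domain $\Omega=\mathbb R^+\times\mathbb T^2$ is a product with a quotient factor, not an open subset of $\mathbb R^3$, so a Stein extension to $\mathbb R^3$ is not literally available; and the interpolation you cite, $\|f\|_{H^{3/2}}\le\|f\|_{H^1}^{1/2}\|f\|_{H^2}^{1/2}$, only lands in the critical space $H^{3/2}$, which does not embed in $L^\infty$ in three dimensions. The inequality is nonetheless true on $\Omega$: reflect evenly across $\{x_1=0\}$ to $\mathbb R\times\mathbb T^2$, take Fourier transform in $x_1$ and Fourier series in $x'$, and split frequencies at $R=\|f\|_{H^2}/\|f\|_{H^1}$; the low-frequency part is controlled by $R^{1/2}\|f\|_{H^1}$ and the high-frequency part by $R^{-1/2}\|f\|_{H^2}$, which optimized over $R$ gives the Agmon bound. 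With that substitution your argument is complete.
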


Here we introduce the following decomposition ideal. Assume for any $f(x)\in L^\infty(\Omega)$ that is periodic in $x'\in\Torus^2$,  we set $\int_{\Torus^2}1dx'=1$ without loss of generality.
 We decompose the function $f$ into principal and transversal parts. 
 Then we can define the decompositions $\mathbf{D}_0$ and $\mathbf{D}_{\neq}$ as follows,
\begin{align}\label{def-decom}
	\Do f:=\mathring{f}:=\int_{{\Torus^2}}f dx',\qquad \Dn f:=\acute{f}:=f-\mathring{f},
\end{align}
where $f$ is integrable on $\Torus^2$. By simple analysis, the following proposition of $\Do$ and $\Dn$ holds for any function $f$ on $\Torus^2$. 
\begin{Prop}\label{prop-decom}\cite{HLWX}
	For the projections $\Do$ and $\Dn$ defined in \cref{def-decom}, 
	it holds,
	
	i) $\Do\Dn f=\Dn\Do f=0$.
	
	ii) For any non-linear function $F$, one has
	\begin{align}
		\Do F(U)-F(\Do U)=O(1)F''(\Do U)\Do\big((\Dn U)^2\big).
	\end{align}
	
	iii) $\|f\|^2_{L^2(\Omega)}=\|\Do f\|^2_{L^2(\R^+)}+\|\Dn f\|^2_{L^2(\Omega)}.$
\end{Prop}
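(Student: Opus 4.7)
The proposition is elementary and the plan is to verify each of the three items by a short direct computation using only the definitions in \cref{def-decom} and the normalization $\int_{\Torus^2}1\,dx'=1$.

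For item (i), the plan is to observe that $\Do f$ is a function of $x_1$ alone, so by the normalization $\Do(\Do f)=\int_{\Torus^2}\Do f\,dx'=\Do f$. Consequently $\Do\Dn f=\Do f-\Do(\Do f)=0$ and $\Dn\Do f=\Do f-\Do(\Do f)=0$ as well. This step is purely definitional and should occupy only one or two lines.

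For item (ii), the plan is to Taylor expand around $\Do U$: writing $U=\Do U+\Dn U$, one has
\begin{align*}
F(U)=F(\Do U)+F'(\Do U)\Dn U+\tfrac{1}{2}F''(\Do U)(\Dn U)^2+R,
\end{align*}
where $R=O(1)(\Dn U)^3$ under the smoothness and smallness assumptions used elsewhere in the paper. Applying $\Do$ and using item (i) to kill the linear term $\Do(F'(\Do U)\Dn U)=F'(\Do U)\Do(\Dn U)=0$ (since $F'(\Do U)$ depends only on $x_1$ and thus pulls out of the $x'$ integral), one is left with $\Do F(U)-F(\Do U)=\tfrac{1}{2}F''(\Do U)\Do((\Dn U)^2)+\Do R$, which is the stated identity up to the $O(1)$ prefactor absorbing both the constant $\tfrac{1}{2}$ and the remainder. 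The only subtle point is that the notation $O(1)F''(\Do U)\Do((\Dn U)^2)$ must be interpreted as ``a bounded factor times'' the leading quadratic piece; this is consistent with how the identity is used later in the paper.

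For item (iii), the plan is the Pythagorean computation
\begin{align*}
\|f\|_{L^2(\Omega)}^2=\|\Do f+\Dn f\|_{L^2(\Omega)}^2=\|\Do f\|_{L^2(\Omega)}^2+2(\Do f,\Dn f)_{L^2(\Omega)}+\|\Dn f\|_{L^2(\Omega)}^2.
\end{align*}
The cross term vanishes because $\Do f=\Do f(x_1)$ pulls out of the $x'$-integral and $\int_{\Torus^2}\Dn f\,dx'=\Do(\Dn f)=0$ by item (i). For the first term, since $\Do f$ is independent of $x'$ and $\int_{\Torus^2}1\,dx'=1$, one has $\|\Do f\|_{L^2(\Omega)}^2=\int_{\R^+}(\Do f)^2\int_{\Torus^2}dx'\,dx_1=\|\Do f\|_{L^2(\R^+)}^2$, which gives the claim. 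No step here is an obstacle; the whole proposition is a bookkeeping statement, and the only thing to flag is the consistent use of the unit-measure normalization of $\Torus^2$, which is fixed at the start of the paragraph preceding the proposition.
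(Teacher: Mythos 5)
The paper does not actually give a proof of this proposition; it states only ``The proof of \cref{prop-decom} is basic and we omit it,'' so there is no argument to compare against. Your proof is correct and supplies the expected elementary computation: idempotence of $\Do$ (hence $\Do\Dn=\Dn\Do=0$) for item (i), Taylor expansion of $F$ about $\Do U$ with item (i) annihilating the linear term for item (ii) (with the $O(1)$ absorbing the constant $\tfrac12$ and the cubic-and-higher remainder, which is the intended reading), and the Pythagorean identity with the vanishing cross term plus the normalization $\int_{\Torus^2}1\,dx'=1$ for item (iii).
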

The proof of \cref{prop-decom} is basic and we omit it.

 \section{Reformulated Problem}\label{sc2}
In order to prove Theorem \ref{thm1}, we study the perturbation of the global solution $(\rho,{\bf m})$ to the problem \eqref{NS}-\eqref{eq2} around the viscous shock wave $(\bar\rho,\bar {\bf m})$ by 
 \eqref{eq18}.
 {Subtracting} \eqref{NS1}$_1$ from \eqref{NS}$_1$ and 
integrating the resulting equation 
 with respect to $x'$ in $\mathbb T^2$, we have
\begin{align}\label{eq15}
\phim_t+\psim_{x_1}=0. 
\end{align}
Then considering Navier boundary conditions \eqref{eq2}, and integrating \eqref{eq15} with respect to $x_1$ in $(0,+\infty)$, one has  
\begin{align}\label{eq16}
\int_0^{+\infty}\phim_tdx_1=\int_{\mathbb T^2}{\bf m}(t,0,x_2,x_3)-\bar{\bf m}(-st+\alpha)dx_2dx_3=-\bar {\bf m}(-st+\alpha).
\end{align}
Integrate \eqref{eq16} with respect to time variable $t$ in $[0,t]$ and let 
 $$I(\alpha):=\int_0^{+\infty}\phim(0,x_1)dx_1-\int_0^{+\infty}\bar{\bf m}(-s\tau+\alpha)d\tau.$$
 By choosing some suitable constant $\alpha$ such that $I(\alpha)=0$,
 we have
 \begin{align}\label{eq17}
 \lim_{t\rightarrow+\infty}\int_0^{+\infty}\phim(t,x_1)dx_1=0.
 \end{align}
 Set
 \begin{align}\label{eqj118}
 \begin{aligned}
 &\Phi(0,0)=-\int_0^{+\infty}\phim(x_1,0)dx_1=-\int_0^{+\infty}\bar{\bf m}(-s\tau+\alpha)d\tau,\\
& A(t)=-\int_t^{+\infty}\bar{\bf m}(-s\tau+\alpha)d\tau,\ \ \ A'(t)=\bar{\bf m}(-s\tau+\alpha),
 \end{aligned}
 \end{align}
 then it holds that
 \begin{align}\label{C50}
 \begin{aligned}
 \Phi(t,0)&=\Phi(0,0)+\int_0^t\Phi_\tau(\tau,0)d\tau=\Phi(0,0)-\int_0^t\Psi_{x_1}(\tau,0)d\tau=\Phi(0,0)+\int_0^{t}\bar{\bf m}(-s\tau+\alpha)d\tau\\
 &=\Phi(0,0)+\int_0^tA'(\tau)d\tau=\Phi(0,0)+A(t)-A(0),
 \end{aligned}
\end{align}
that is $\Phi(t,0)=A(t).$

Subtracting \eqref{NS1} from \eqref{NS}, one has the following perturbation system for $(\phi,\psi)$ given by \eqref{eq18},
\begin{align}\label{eq21}
\left\{\begin{aligned}
&\phi_t+\dv\psi=0,\\
&\psi_t+\dv\left(\frac{{\bf m}\otimes{\bf m}}{\rho}-\frac{\bar{\bf m}\otimes\bar{\bf m}}{\bar\rho}\right)+\nabla\left(p(\rho)-p(\bar\rho)\right)\\
&\qquad=\mu\Delta\left(\frac{{\bf m}}{\rho}-\frac{\bar{\bf m}}{\bar\rho}\right)+(\mu+\lambda)\nabla\dv\left(\frac{{\bf m}}{\rho}-\frac{\bar{\bf m}}{\bar\rho}\right),
\end{aligned}\right.
\end{align}
with the initial data
\begin{align}\label{eq22}
(\phi,\psi)(0,x)=(\phi_0,\psi_0)(x)\in H^3(\Omega),
\end{align}
and the boundary conditions
\begin{align}\label{eq23}
\left\{\begin{aligned}
&\psi_1(t,0,x')=-A(t),\\
&\psi_2(t,0,x')=\rho(t,0,x')\times k(x')\p_1\psi_2(t,0,x'),\\
&\psi_3(t,0,x')=\rho(t,0,x')\times k(x')\p_1\psi_3(t,0,x'),
\end{aligned}\right.
\end{align}
because of $\psi_i=\rho u_i, i=2,3.$

Integrating \eqref{eq21} with respect to $x'$ in $\mathbb T^2$ and taking {the} anti-derivative variable into the resulting equation, one has
\begin{equation}\label{equ-anti}
	\begin{cases}
		\p_t \Phi + \p_1 \Psi
		 =0,\\ 
		\p_t \Psi
		+ 2\bar u_1 \p_1 \Psi
		+ 
		\bar w\p_1 \Phi - \mut \p_1 \big[ \frac{1}{\bar\rho} \big( \p_1 \Psi
		 - \bar u_1 \p_1 \Phi \big) \big] = 
		 \Do\mathcal{N}_{11}+\p_1\left(\Do\mathcal{N}_{21}\right),
	\end{cases}
\end{equation}
where 
\begin{equation}\label{alpha}
	{{\bar{w}}  = p'(\bar\rho) - \abs{\bar u_1}^2,}
\end{equation}
\begin{equation}\label{q}
	\begin{aligned}
		&\mathcal{N}_{11}:=
		- \Big(\frac{m_1^2}{\rho} - \frac{\bar m_1^2}{\bar\rho} - 2\bar u_1 \psi_1 + \bar u_1^2 \phi\Big) - \big(p(\rho)-p(\bar\rho) - p'(\bar\rho) \phi \big)=O(1)\left(|\phi|^2+|\psi_1|^2\right)
		 ,\\
	&\mathcal{N}_{21} := \mut
	\Big(\frac{m_1}{\rho} - \frac{\bar m_1}{\bar \rho} - \frac{\psi_1}{\bar \rho} + \frac{\bar u_1}{\bar \rho} \phi\Big)=O(1)\left(|\phi|^2+|\psi_1|^2\right),
	\end{aligned}
\end{equation}
with the initial data
\begin{align}\label{eq24}
(\Phi,\Psi)(0,x_1)=-\int_{x_1}^{+\infty}(\phim_0,\psim_{10})(y_1)dy_1,
\end{align}
and the boundary conditions
\begin{align}\label{eq25}
\Phi(0,0)=A(0),\ \ \ \p_1\Psi\Big|_{x_1=0}=A'(t),\ \ \ t\ge0.
\end{align}

Define the perturbation of velocity as $\zeta(t,x)=(\zeta_1,\zeta_2,\zeta_3)^t:={\bf u}(t,x)-\bar{\bf u}(x_1-st+\alpha)$. The perturbation system of $(\phi,\zeta)$ can be written as
\begin{equation}\label{eq26}
	\begin{cases}
		\phi_t+\rho\dv\zeta+{\bf u}\cdot\nabla\phi+\dv{\bar{\bf u}}\phi+\nabla\bar\rho\cdot\zeta=0,\\
		\rho\zeta_t+\rho{\bf u}\cdot\nabla\zeta+\nabla\left(p(\rho)-p(\bar\rho)\right)+\rho\zeta\cdot\nabla\bar{\bf u}+\phi(\bar{\bf u}_t+\bar{\bf u}\cdot\nabla\bar{\bf u})-\mu\Delta\zeta-(\mu+\lambda)\nabla\dv\zeta=0,
	\end{cases}
\end{equation}
with the initial data
\begin{align}\label{eq27}
(\phi,\zeta)(0,x)=(\phi_0,\zeta_0)(x)\in H^3(\Omega),\ \ \ \zeta_0(x)
=\frac{1}{\rho_0(x)}\left(\psi_0(x)-\bar{\bf u}_0(x_1+\alpha)\phi_0(x)\right),
\end{align}
and the boundary conditions
\begin{align}\label{eq28}
\left\{\begin{aligned}
&\zeta_1(t,0,x')=\frac{-A(t)}{\rho(t,0,x')},\\
&\zeta_2(t,0,x')= k(x')\p_1\zeta_2(t,0,x'),\\
&\zeta_3(t,0,x')= k(x')\p_1\zeta_3(t,0,x').
\end{aligned}\right.
\end{align}

For any $0 \leq T \leq+\infty$, the solution $(\phi, {\zeta})(t, x)$ of system \eqref{eq26} is sought in the set of functional space $X(0,+\infty)$ defined by
\begin{equation}\begin{aligned}
X(0,T):=\Big\{(\phi,\zeta)&:(\phi,\zeta){\mathrm{~is~periodic~in~}}x'=(x_{2},x_{3})\in\mathbb{T}^{2},(\phi,\zeta)\in C(0,T;H^{2}(\Omega)), \\
&\zeta_{1}\in L^{2}\big(0,T;L^{2}(\Omega)\big),(\phi,\nabla\zeta)\in L^{2}\big(0,T;H^{2}(\Omega)\big)\Big\}.
\end{aligned}\end{equation}

\section{ A priori estimates}\label{sc3}


Note that if constant $\chi$ is suitably small, the condition
\begin{equation}\label{C4}
E(t):=\sup_{t\in(0,T)}\big(\:\|(\Phi,\Psi)\|_{L^2(\mathbb{R}^+)}+\|(\phi,\zeta)\|_{H^2(\Omega)}\:\big)\leq \chi,
\end{equation}
and Sobolev embedding theorem implies that
$$|(\phi, \zeta)| \leq \frac{1}{2} \rho_{-}\ \  \text{ and }\ \  |{\bf u}|=\left|\left(u_1, u_2, u_3\right)\right| \leq C,$$
where $C$ is a positive constant which only depends on $\rho_{+}, {\bf u}_{ +}$. Therefore, we have the lower and upper bounds of the density function
\begin{equation}\label{C39}
 0<\frac{1}{2} \rho_{-} \leq \rho(t, x) \leq \frac{1}{2} \rho_{-}+\rho_{+}.
\end{equation}

Since the proof for the local-in-time existence and uniqueness of the classical solution to \eqref{NS}-\eqref{eq2} is standard, the details will be omitted. To prove Theorem \ref{thm1}, it suffices to show the following a priori estimates.

\begin{Prop}[A priori estimates]\label{Cproposition}
    Under   the assumptions of Theorem \ref{thm1}, for any  fixed $T>0,$ assume that $(\phi,\zeta)\in X(0,T)$ solves the problem (\ref{eq26}), and the anti-derivative  variables,

\begin{equation}
(\Phi,\Psi)(x_1,t)=-\int^{+\infty}_{x_1}\int_{\mathbb{T}^2}(\phi,\psi_1)(y_1,x_2,x_3,t)dx_2 dx_3 dy_1,\quad x_1\in\mathbb{R}^+,\ t>0,
\end{equation}
exists  and  belongs  to  the  $C(0,T;H^3(\mathbb{R}^+))$    space, where $\psi_1=m_1\boldsymbol{-}\bar{m}_1=(\bar{\rho}+\phi)(\bar{u}_1+\zeta_1)\boldsymbol{-}\bar{\rho}\bar{u}_1.$ Then there exist  $\delta_0>0 $  and $ {\chi>0}$, such that, if $\delta\leqslant\delta_{0}$, and

\begin{equation}
\sup_{t\in(0,T)}\left(\:\left\|(\Phi,\Psi)\right\|_{L^{2}(\mathbb{R}^+)}+\left\|(\phi,\zeta)\right\|_{H^{2}(\Omega)}\right)\leqslant{ }\chi,
\end{equation}

 then it holds that
 \begin{equation}\label{eqj90}
 \begin{aligned}
\sup_{t\in(0,T)} &\left(\parallel(\Phi,\Psi)\parallel_{L^2(\mathbb{R}^+)}^2+\parallel(\phi,\zeta)\parallel_{H^2(\Omega)}^2+\left\|\zeta^{\prime}\big|_{x_1=0}(\cdot,t)\right\|_{L^2(\mathbb{T}^2)}^2+\left\|\partial_{x^{\prime}}\zeta^{\prime}\Big|_{x_1=0}(\cdot,t)\right\|_{L^2(\mathbb{T}^2)}^2\right)  \\
&+\int_{0}^{T}\Big(\left\|\left|(\bar{u}_{1}{})'\right|^{\frac{1}{2}}\Psi\right\|_{L^{2}(\mathbb{R}^+)}^{2}+\left\|\zeta_{1}\right\|_{L^{2}(\Omega)}^{2}+\left\|(\phi,\nabla\zeta)\right\|_{H^{2}(\Omega)}^{2} +\|\partial_{t}(\phi,\zeta)\|_{H^{1}(\Omega)}^{2} \\
& + 
{ \left(|\bar{u}_{1}| \Psi^{2}+|\p_1\bar u_1|\Phi_{x_1}^{2} \right)}\Big|_{x_1=0} \Big)dt+\int_{0}^{T}\left(\left\|\left.\partial_{t}\zeta^{\prime}\right|_{x_{1}=0}\right\|_{L^{2}(\mathbb{T}^{2})}^{2}+\left\|\left.\partial_{x^{\prime}}^{\iota}\zeta^{\prime}\right|_{x_{1}=0}\right\|_{L^{2}(\mathbb{T}^{2})}^{2}\right)dt \\
\leq& C \left\|(\Phi_{0},\Psi_{0})\right\|_{L^{2}(\mathbb{R}^+)}^{2}+ C\left\|(\phi_{0},\zeta_{0})\right\|_{H^{2}(\Omega)}^{2}+ C\delta,
\end{aligned}
\end{equation}
where the sequel $\zeta^{\prime}=\left(\zeta_2, \zeta_3\right), \partial_{x^{\prime}}=\partial_{x_2}$ or $\partial_{x_3}$ and  $|\iota|=0,1,2.$ 
\end{Prop}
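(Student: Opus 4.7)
I plan to close \cref{eqj90} by a hierarchy of weighted $L^{2}$ energy identities organised around the decomposition $(\Do,\Dn)$ from \cref{def-decom}: the one-dimensional zero mode is handled at the level of its anti-derivative $(\Phi,\Psi)$ from \cref{equ-anti}, which unlocks a Kawashima-type weight $|(\bar u_1)'|$, while the multi-dimensional non-zero mode is controlled by the viscous dissipation through the Poincar\'e inequality on $\mathbb T^{2}$ (which is unavailable for the zero mode). The smallness of $\delta$ and $\chi$, combined with the exponential decay of $\bar{\mathbf{m}}(-s\tau+\alpha)$ from \cref{lem1}(ii), absorbs the nonlinear errors and the boundary forcing $A(t)$ of \cref{eqj118}.

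\textbf{Step 1: anti-derivative estimate for $(\Phi,\Psi)$.} I test the first line of \cref{equ-anti} against $\bar w\Phi/\bar\rho$ and the second against $\Psi$, add, and integrate over $\R^{+}$. Integrating the cross term $\bar w\p_1\Phi\cdot\Psi$ by parts and combining it with the convection $2\bar u_1\p_1\Psi\cdot\Psi$ produces the good dissipation $\int_{\R^{+}}|(\bar u_1)'|\Psi^{2}\,dx_1$ with strictly positive coefficient by \cref{lem1}(i), while the viscous term yields $\int_{\R^{+}}|\p_1\Psi|^{2}/\bar\rho\,dx_1$. At $x_1=0$ the traces $\Phi(t,0)=A(t)$ and $\p_1\Psi(t,0)=A'(t)$ from \cref{C50} are controlled using $\mathbf u_{-}=0$, \cref{lem1}(ii) and \cref{C7}, which give $|A(t)|\le C\delta$ and $\int_{0}^{T}|A'(t)|^{2}\,dt=O(\delta)$; the viscous boundary integration simultaneously uncovers the favourable traces $(|\bar u_1|\Psi^{2}+|\p_1\bar u_1|\Phi_{x_1}^{2})|_{x_1=0}$ retained on the left-hand side of \cref{eqj90}. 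The quadratic remainders $\mathcal{N}_{11},\mathcal{N}_{21}$ from \cref{q} are of order $E(t)^{3}$ after \cref{Lem-GN}.

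\textbf{Step 2: basic and higher estimates for $(\phi,\zeta)$.} Multiplying the density equation in \cref{eq26} by $p'(\bar\rho)\phi/\bar\rho$ and the momentum equation by $\zeta$ produces the viscous dissipation $\mu\|\nabla\zeta\|^{2}+(\mu+\lambda)\|\dv\zeta\|^{2}$. The \emph{bad-sign} lower-order terms originating in shock \emph{compressibility} through $\rho\zeta\cdot\nabla\bar{\mathbf{u}}$ and $\phi\,\bar{\mathbf{u}}\cdot\nabla\bar{\mathbf{u}}$ --- the terms highlighted in the third line of \cref{C17} --- are split using \cref{prop-decom}: the $\Do$-piece is bounded by $\delta\int_{0}^{T}\||(\bar u_1)'|^{1/2}\Psi\|_{L^{2}(\R^{+})}^{2}\,dt$ after using the identity $\psim_1=\p_1\Psi$ from \cref{eq19} to relate the zero modes of $(\phi,\zeta)$ to $(\Phi,\Psi)$, and is absorbed by Step 1; the $\Dn$-piece is absorbed into $\mu\|\nabla\zeta\|^{2}$ through the Poincar\'e inequality on $\mathbb T^{2}$, which applies precisely because $\int_{\mathbb T^{2}}\Dn f\,dx'=0$. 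Higher $H^{2}$ regularity is obtained by differentiating \cref{eq26} in $\p_t$ and $\p_{x'}$ (both commute with the Navier conditions) and then recovering $\p_{x_1}$ derivatives algebraically from the equations themselves; cubic nonlinear remainders are closed via \cref{Lem-GN} together with the pointwise bound \cref{C39}.

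\textbf{Step 3: boundary estimates and the main obstacle.} The Navier conditions $\zeta_i=k(x')\p_1\zeta_i$ at $x_1=0$ for $i=2,3$ survive $\p_t$- and $\p_{x'}$-differentiation, which converts them into a trace-level identity that the two boundary lemmas advertised as \cref{CLem0} turn into control of $\|\zeta'|_{x_1=0}\|_{L^{2}(\mathbb T^{2})}^{2}$ together with its tangential and time derivatives appearing on the left of \cref{eqj90}; the test functions used there are the corresponding derivatives of the tangential traces of $\zeta'$. Adding Steps 1--3 and absorbing the $O(\delta+\chi)$ remainders yields the a priori estimate. The principal obstacle is the bad-sign compressibility terms: neither the anti-derivative method --- inapplicable on $\mathbb T^{2}$ for want of decay at infinity --- nor the Poincar\'e inequality --- which fails for the zero mode --- suffices alone, and only the simultaneous deployment of both through \cref{def-decom} makes the scheme close.
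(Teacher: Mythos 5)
Your high-level architecture matches the paper's --- anti-derivative treatment of the zero mode, Poincar\'e control of the non-zero mode, exploitation of the Navier boundary condition for trace terms --- but there is a genuine gap in the absorption mechanism you describe for the bad-sign compressibility terms, which is the crux of the whole proof.

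You claim in Step 2 that the $\Do$-piece of the bad terms in \cref{C17} is ``bounded by $\delta\int_0^T\||(\bar u_1)'|^{1/2}\Psi\|_{L^2(\R^+)}^2\,dt$ after using the identity $\psim_1=\p_1\Psi$'' and therefore absorbed by the weighted Kawashima dissipation produced in Step~1. This cannot work as stated. The terms $\tilde I_4^i$ in \cref{C17} are of the form $O(1)|(\bar u_1)'|\bigl(|\phi|^2+|\zeta_1|^2\bigr)$; after taking $\Do$ and using $\phim=\p_1\Phi$, $\psim_1=\p_1\Psi$, the zero-mode piece is controlled by $\int|(\bar u_1)'|\bigl(|\p_1\Phi|^2+|\p_1\Psi|^2\bigr)\,dx_1$, i.e.\ weighted norms of the \emph{derivatives} $\p_1\Phi,\p_1\Psi$, not of $\Psi$ itself. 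The dissipation $\int|(\bar u_1)'|\Psi^2$ you obtain in Step~1 controls $\Psi$ alone and gives no control of $\p_1\Phi$ or $\p_1\Psi$ (and you cannot trade via integration by parts: $\bar u_1''\Psi\p_1\Psi$ terms reappear with the wrong structure). The paper instead bounds $I_4^3$ by $C\delta\int_0^t\|(\phi,\zeta_1)\|_{L^2(\Omega)}^2\,d\tau$ in \cref{C29}, then uses the Poincar\'e-type identities \cref{C12,C10} to dominate $\|(\phi,\zeta_1)\|_{L^2}^2$ by $\|\p_1(\Phi,\Psi)\|_{L^2(\R^+)}^2+\|\nabla(\phi,\zeta_1)\|_{L^2(\Omega)}^2$, and absorbs into the \emph{unweighted} derivative dissipations supplied by \cref{CLem5}. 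The weighted $\Psi$-dissipation in the paper serves a different role --- it absorbs the internal cross term $I_1^2$ arising from $\bar w_{x_1}\Psi\Psi_{x_1}$ inside the anti-derivative estimate \cref{C2} --- and is never used against the bad-sign terms of \cref{C17}. You need to replace your absorption claim by the one going through $\|\p_1(\Phi,\Psi)\|^2$.

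Two smaller discrepancies worth noting. In Step~1 you attribute both boundary traces $|\bar u_1|\Psi^2|_{x_1=0}$ and $|\p_1\bar u_1|\Phi_{x_1}^2|_{x_1=0}$ to a single energy identity; in the paper the first comes from the basic anti-derivative estimate (\cref{CLem1}) and the second from the separate estimate on $\p_1\Phi$ obtained by pairing $\eqref{equ-anti}_2$ with $\p_1\Phi$ (\cref{CLem2}); the coupling $\Psi\p_1\Phi$ is an essential cross term that your single-identity account does not produce. And in Step~3 you assign the control of $\|\zeta'|_{x_1=0}\|_{L^2(\T^2)}$ to ``boundary lemmas of the type of \cref{CLem0}''; in fact \cref{CLem0} concerns $(\Phi,\Psi)$ only, and the good trace terms on $\zeta'$ appear automatically with a favourable sign once you integrate $\mu\Delta\zeta\cdot\zeta$ by parts and insert the Navier condition $\zeta_i=k(x')\p_1\zeta_i$.
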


From now on, we always assume that $\chi+\delta \ll1$. For convenience, we define $M(t) \geq 0$ by
$$
M^2(t)=(\chi+\delta)\left[E^2(t)+\int_0^t\left(\left\| \left(\phi, \zeta_1\right)\right\|^2+\left\|\partial_\tau(\phi, \zeta)\right\|_{H^1}^2+\|\nabla \phi\|_{H^1}^2+\|\nabla \zeta\|_{H^2}^2\right) d \tau\right].
$$

 Motivated by \cite{MM1}, we give the boundary estimates first.
\begin{Lem}\label{CLem0}
For $0\leq t \leq T$, the following inequalities hold:
\begin{equation}\label{C60}
\begin{aligned}
\left|\int_{0}^{t}(\Phi\Psi)|_{x_{1}=0}d\tau\right| \leqq C \delta,\quad\left|\int_{0}^{t}(\Psi\Psi_{x_{1}})|_{x_{1}=0}d\tau\right|\leqq C \delta.
\end{aligned}\end{equation}
\end{Lem}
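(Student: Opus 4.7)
The plan is to leverage the explicit identities $\Phi(t,0)=A(t)$ and $\p_1\Psi(t,0)=A'(t)$ from \cref{eqj118,C50}, where
\begin{equation*}
A(t)=-\int_t^{+\infty}\bar m_1(-s\tau+\alpha)\,d\tau,\qquad A'(t)=\bar m_1(-st+\alpha).
\end{equation*}
The key structural input is the wall condition $u_{1-}=0$: together with \cref{C7} and the exponential-decay properties of the shock profile from \cref{lem1}, it yields $|\bar m_1(y)|\le C\delta$ uniformly and $|\bar m_1(y)|\le C\delta e^{-c_0\delta|y|}$ for $y\le 0$. Since $-s\tau+\alpha\to-\infty$ as $\tau\to\infty$, pulling back through $y=-s\tau+\alpha$ yields
\begin{equation*}
|A(t)|\le Ce^{-c_0\delta t},\qquad |A'(t)|\le C\delta e^{-c_0\delta t},\qquad \int_0^{\infty}|A'(\tau)|^2\,d\tau\le C\delta,
\end{equation*}
and it is the $L^2$-in-time bound on $A'$ that will ultimately supply the required $\delta$-smallness.

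For the second estimate, I would use $\p_1\Psi(t,0)=A'(t)$ to rewrite $\int_0^t(\Psi\Psi_{x_1})|_{x_1=0}\,d\tau=\int_0^t\Psi(0,\tau)A'(\tau)\,d\tau$, then apply Cauchy--Schwarz in $\tau$ combined with the one-dimensional trace inequality $|\Psi(0,\tau)|^2\le C\|\Psi(\cdot,\tau)\|_{L^2(\R^+)}\|\Psi_{x_1}(\cdot,\tau)\|_{L^2(\R^+)}$ and the a priori assumption \cref{C4}; the $O(\sqrt{\delta})$ factor inherited from $\|A'\|_{L^2}$ delivers the desired $C\delta$ bound after absorbing the $\chi$-dependent factor via the smallness hypothesis.

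For the first estimate a direct Cauchy--Schwarz argument fails because $A(t)$ itself is only $O(1)$ in $\delta$. My strategy is to integrate by parts in $\tau$ so as to trade the $O(1)$ factor $A$ for the small factor $A'$:
\begin{equation*}
\int_0^t A(\tau)\Psi(0,\tau)\,d\tau=\bigl[A(\tau)\mathcal I(\tau)\bigr]_0^t-\int_0^t A'(\tau)\mathcal I(\tau)\,d\tau,\qquad \mathcal I(\tau):=\int_0^\tau\Psi(0,\sigma)\,d\sigma.
\end{equation*}
The endpoint contribution is tamed by the exponential decay $|A(t)|\le Ce^{-c_0\delta t}$ together with a trace/a priori bound on $\mathcal I$, while the remaining bulk integral now carries the small multiplier $A'$ and is handled by another application of Cauchy--Schwarz, with the trace of $\Psi$ controlled as in the previous step.

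The chief obstacle is precisely the absence of intrinsic $\delta$-smallness in $A$: every successful estimate must arrange for the derivative $A'=\bar m_1(-st+\alpha)$, whose $\delta$-smallness comes from the shock strength via \cref{C7}, to appear as a multiplier rather than $A$ itself. Integration by parts in time is what accomplishes this transfer, and the residual endpoint-in-time contributions are absorbed using the exponential decay of $A$ at rate $c_0\delta$ together with the a priori smallness $\chi$ from \cref{C4}.
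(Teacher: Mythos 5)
Your route is genuinely different from the paper's: the paper never integrates by parts in time. Instead, after writing $\Phi(\tau,0)=A(\tau)$ and applying the one-dimensional interpolation $|\Psi(\tau,0)|\le C\|\Psi\|_{L^2(\R^+)}^{1/2}\|\Psi_{x_1}\|_{L^2(\R^+)}^{1/2}$, the paper pulls $\sup_\tau\big(\|\Psi\|^{1/2}\|\Psi_{x_1}\|^{1/2}\big)$ out of the time integral (bounded via \cref{C4} and Poincar\'e on the non-zero mode) and tries to extract the $\delta$-smallness from the remaining integral $\int_0^t|\bar u_1(-s\tau+\alpha)|\,d\tau$, invoking the wall relation $\zeta_1(\tau,0,x')=-\bar u_1(-s\tau+\alpha)$ from \cref{eq28} to pass from $A(\tau)/\rho(\tau,0,x')$ to the shock-profile factor. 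You instead try to import $\delta$ through $\|A'\|_{L^2_\tau}\le C\sqrt{\delta}$ via Cauchy--Schwarz, and through an integration by parts in $\tau$.

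Both of your steps have a gap, for the same reason: \cref{C4} together with the trace inequality controls $\Psi|_{x_1=0}$ only \emph{pointwise} in time, $\sup_\tau|\Psi(\tau,0)|\le C\chi$, and gives no $T$-uniform control of $\int_0^T|\Psi(\tau,0)|^2\,d\tau$ or of $\mathcal{I}(\tau):=\int_0^\tau\Psi(\sigma,0)\,d\sigma$. For the second inequality, the $L^2_\tau$ norm of $\Psi(\cdot,0)$ can grow like $\chi\sqrt{t}$, so its product with $\|A'\|_{L^2_\tau}\le C\sqrt{\delta}$ is not uniform in $T$; the dissipation estimate $\int_0^T\|\Psi_{x_1}\|^2\,dt$ that would repair this is produced only later, in \cref{CLem1}, and cannot be presupposed here. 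For the first inequality, the best a priori control is $|\mathcal{I}(\tau)|\le C\chi\tau$; the endpoint term then satisfies $|A(t)\mathcal{I}(t)|\lesssim\chi\,t\,e^{-c_0\delta t}$, which peaks at $t\sim 1/(c_0\delta)$ with size $\sim\chi/\delta$, and likewise $\int_0^t|A'(\tau)||\mathcal{I}(\tau)|\,d\tau\lesssim\int_0^\infty\delta\,e^{-c_0\delta\tau}\,\chi\tau\,d\tau\sim\chi/\delta$. Since $\chi$ and $\delta$ are independent small parameters, neither contribution is $O(\delta)$: the integration by parts trades the $O(1)$ amplitude of $A$ for the secular growth of $\mathcal{I}$, and the slow exponential decay at rate $c_0\delta$ is too weak to absorb it. Your observation that $A$ itself carries no intrinsic $\delta$-factor is a good one, but to close the argument you would need a time-uniform bound on $\mathcal{I}$ or on $\|\Psi(\cdot,0)\|_{L^2_\tau}$ that the hypotheses available at this stage do not supply.
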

\begin{proof}
 Since the proofs of the two inequalities in \eqref{C60} are similar, for simplicity, we only prove the first {one,}
\begin{equation}\begin{aligned}
&\left|\int_{0}^{t}(\Phi\Psi)|_{x_{1}=0}d\tau \right|
\leq  \int_{0}^{t} \|  \Psi \|_{L^{\infty}{(\mathbb R^+)}}      |A(\tau)|d\tau =    \int_{0}^{t}    \left\|     \|  \Psi \|_{L^{\infty}{(\mathbb R^+)}}      \right\|_{L^2({\mathbb T^2})}   |A(\tau)|d\tau \\
&\quad\leq  C (\frac{1}{2} \rho_{-}+\rho_{+})\int_{0}^{t}    \left\|     \|  \Psi \|_{L^{2}{(\mathbb R^+)}}^{\frac{1}{2}}     \|  \Psi_{x_1} \|_{L^{2}{(\mathbb R^+)}}^{\frac{1}{2}}     \right\|_{L^2({\mathbb T^2})}   \left|\frac{A(\tau)}{\rho(\tau,0,x')}\right|d\tau \\
&\quad\leq  C \int_{0}^{t}        \|  \Psi \|_{L^{2}{(\Omega)}}^{\frac{1}{2}}     \|  \psim   \|_{L^{2}{(\Omega)}}^{\frac{1}{2}}        |\bar{\mathbf{u}}(-s\tau+\alpha)|d\tau \quad\leq   C  \sup_{t}\left(     \|  \Psi \|_{L^{2}{(\Omega)}}^{\frac{1}{2}}     \|  \psim   \|_{L^{2}{(\Omega)}}^{\frac{1}{2}}  \right)    \int_{0}^{t}     |\bar{\mathbf{u}}(-s\tau+\alpha)|d\tau \\
&\quad\leq   C  \delta   \sup_{t} \left(      \|  \Psi \|_{L^{2}{(\Omega)}}^{\frac{1}{2}}     \|  \psi    \|_{L^{2}{(\Omega)}}^{\frac{1}{2}}  +     \|  \Psi \|_{L^{2}{(\Omega)}}^{\frac{1}{2}}     \|  \acute\psi   \|_{L^{2}{(\Omega)}}^{\frac{1}{2}}  \right) \\
&\quad\leq   C  \delta   \sup_{t} \left(      \|  \Psi \|_{L^{2}{(\Omega)}}^{\frac{1}{2}}     \|  \psi    \|_{L^{2}{(\Omega)}}^{\frac{1}{2}}  +     \|  \Psi \|_{L^{2}{(\Omega)}}^{\frac{1}{2}}     \|  \nabla\psi   \|_{L^{2}{(\Omega)}}^{\frac{1}{2}}  \right)\leq   C  \delta .
 \end{aligned}\end{equation}
 \end{proof}

Then, we have the following $L^2$ estimate.
\begin{Lem}\label{CLem1}
For $T>0$ and $(\phi, \psi) \in X(0, T)$, under the assumptions of Proposition \ref{Cproposition}  with suitably small $\chi+\delta$, we have for $t \in[0, T]$,
\begin{align}\label{C11}
\begin{aligned}
&\sup_{t\in(0,T)}\left\|(\Phi,\Psi)\right\|_{L^{2}(\mathbb{R}^+)}^{2}+\int_{0}^{T}\left(\left\||(\bar{u}_{1}{})'|^{\frac{1}{2}}\Psi\right\|_{L^{2}(\mathbb{R}^+)}^{2}+\left\|\partial_{1}\Psi\right\|_{L^{2}(\mathbb{R}^+)}^{2}  +   |\bar{u}_{1}| \Psi^{2} \Big|_{x_1=0}  \right)   \mathrm{d}t\\
\leq &C\left\|(\Phi_0,\Psi_0)\right\|_{L^2(\mathbb{R}^+)}^2+ C(\delta+\chi)\int_0^T\left\|\partial_{1}\Phi\right\|_{L^2(\mathbb{R}^+)}^2 \mathrm{d}t+   C   \delta\int_{0}^{T}    \|\phi_{ x_1}\| ^2   \mathrm{d}t\\
&+C\chi\int_0^T\left\|\nabla(\phi,\zeta_1)\right\|_{L^2(\Omega)}^2 \mathrm{d}t +C     \int_{0}^{T}    |\bar{u}_{1}| \partial_{1}\Phi^2 \Big|_{x_1=0}  \mathrm{d}t + C\delta.
\end{aligned}
\end{align}

\end{Lem}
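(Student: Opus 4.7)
The plan is to derive the basic $L^2$ energy estimate for $(\Phi,\Psi)$ by weighted multipliers applied to the anti-derivative system \eqref{equ-anti}. First I would multiply the first equation by $\bar{w}\Phi$ and the second by $\Psi$, add, and integrate over $\R^+$. The time-derivative structure yields $\frac{d}{dt}\int_{\R^+}\tfrac12(\bar{w}\Phi^2+\Psi^2)\,dx_1$, modulo the correction $-\tfrac12\bar{w}_t\Phi^2=\tfrac{s}{2}\bar{w}_{x_1}\Phi^2$ coming from the travelling-wave identity $\bar{w}_t=-s\bar{w}_{x_1}$. Integration by parts on the convective part $\bar{w}\,\p_1(\Phi\Psi)+\bar{u}_1\,\p_1(\Psi^2)$ then produces the interior good term $\int|\bar{u}_1'|\Psi^2\,dx_1$ (with the correct sign thanks to $\bar{u}_1'<0$ from \Cref{lem1}), the cross integral $-\int\bar{w}_{x_1}\Phi\Psi\,dx_1$, and the boundary contributions $-(\bar{w}\Phi\Psi+\bar{u}_1\Psi^2)\big|_{x_1=0}$. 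The viscous term, after one integration by parts, gives the main dissipation $\mut\int\bar\rho^{-1}|\p_1\Psi|^2\,dx_1$, a cross integral $\mut\int\bar\rho^{-1}\bar{u}_1\,\p_1\Psi\,\p_1\Phi\,dx_1$ (bounded by $\eta\|\p_1\Psi\|^2+C_\eta\delta^2\|\p_1\Phi\|^2$ via \eqref{C7} and Cauchy--Schwarz, the first piece absorbed by the dissipation), and the boundary piece $\mut\bar\rho^{-1}\Psi(\p_1\Psi-\bar{u}_1\p_1\Phi)\big|_{x_1=0}$.

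Next, the boundary contributions at $x_1=0$ are controlled by \Cref{CLem0} together with the boundary data $\Phi(t,0)=A(t)$ and $\p_1\Psi|_{x_1=0}=A'(t)=\bar{\mathbf m}(-st+\alpha)$: the time integrals $\int_0^t(\bar{w}\Phi\Psi)|_{x_1=0}\,d\tau$ and $\int_0^t(\Psi\p_1\Psi)|_{x_1=0}\,d\tau$ are both $O(\delta)$ by \eqref{C60}, producing the $C\delta$ contribution in \eqref{C11}. The term $|\bar{u}_1|\Psi^2|_{x_1=0}$ has the right sign to survive on the left-hand side (since $\bar{u}_1(-st+\alpha)\le 0$ by the shock structure with $u_{1-}=0$), while the companion $|\bar{u}_1||\p_1\Phi|^2|_{x_1=0}$ appears on the right-hand side by a Cauchy--Schwarz on the cross boundary product $|\bar{u}_1|\Psi\p_1\Phi|_{x_1=0}$. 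The quadratic nonlinear sources $\Do\mathcal N_{11}$ and $\p_1(\Do\mathcal N_{21})$ in \eqref{q} are paired with $\Psi$ or, after transferring $\p_1$, with $\p_1\Psi$; using the Gagliardo--Nirenberg inequality \eqref{eq13} and the mode decomposition of \Cref{prop-decom} together with the smallness of $\chi$, they are bounded by $C\chi\int\|\nabla(\phi,\zeta_1)\|^2\,d\tau$ plus pieces absorbable in the dissipation, which yields the $C\chi\int_0^T\|\nabla(\phi,\zeta_1)\|^2\,dt$ term in \eqref{C11}.

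The principal obstacle is the interior cross term $-\int\bar{w}_{x_1}\Phi\Psi\,dx_1$ produced by the compressibility of the shock: since $\bar{w}_{x_1}$ has no sign-definite pairing with $\Phi\Psi$, it cannot be absorbed directly. My plan is to split it by Cauchy--Schwarz as $|\bar{w}_{x_1}\Phi\Psi|\le\eta|\bar{u}_1'|\Psi^2+C_\eta|\bar{u}_1'|\Phi^2$ (using $|\bar{w}_{x_1}|\le C|\bar{u}_1'|$ by \Cref{lem1}), so the $\Psi$-part is absorbed into the good term $\int|\bar{u}_1'|\Psi^2\,dx_1$. The residual $\int|\bar{u}_1'|\Phi^2\,dx_1$ is then integrated by parts as $-[\bar{u}_1\Phi^2]_0^\infty+2\int\bar{u}_1\Phi\,\p_1\Phi\,dx_1$, and, using $|\bar{u}_1|\le C\delta$ from \eqref{C7} plus one more Cauchy--Schwarz, is majorized by a small multiple of $\|\Phi\|^2$ plus $C\delta^2\|\p_1\Phi\|^2$; the term $\tfrac{s}{2}\int\bar{w}_{x_1}\Phi^2\,dx_1$ is treated in the same way. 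This produces exactly the losses $C(\delta+\chi)\int_0^T\|\p_1\Phi\|^2\,dt$ and $C\delta\int_0^T\|\phi_{x_1}\|^2\,dt$ on the right-hand side of \eqref{C11}; these cannot be absorbed at this lowest-order level but will be closed later, once the $H^2$ estimates are combined with the Poincar\'e inequality on the non-zero mode, as outlined in the introduction.
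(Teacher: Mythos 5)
Your choice of multipliers $(\bar{w}\Phi,\Psi)$ differs from the paper's, which uses $(\Phi,\Psi/\bar{w})$, and the difference is not cosmetic — it creates a term you cannot then control. With the paper's multiplier the hyperbolic cross contributions are $\Phi\,\p_1\Psi+\frac{\Psi}{\bar w}\cdot\bar w\,\p_1\Phi=\p_1(\Phi\Psi)$, a perfect derivative, and the $\Psi^2$ good term comes out directly as $\beta\Psi^2$ with $\beta=-\big(\tfrac{1}{2\bar w}\big)_t-\big(\tfrac{\bar u_1}{\bar w}\big)_{x_1}\geq c_0|\bar u_1'|$; there is no $\Phi\Psi$ cross term and no $\Phi^2$ correction at all. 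With your multiplier the cross contribution is $\bar w\,\p_1(\Phi\Psi)=\p_1(\bar w\Phi\Psi)-\bar w_{x_1}\Phi\Psi$, and in addition the time-derivative gives $-\tfrac12\bar w_t\Phi^2$, neither of which has a usable sign.

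The gap is in your treatment of that residual. You split $|\bar w_{x_1}\Phi\Psi|\le\eta|\bar u_1'|\Psi^2+C_\eta|\bar u_1'|\Phi^2$ (using $|\bar w_{x_1}|\lesssim|\bar u_1'|$), absorb the $\Psi^2$ piece, and are left with $C_\eta\int_0^T\int_{\R^+}|\bar u_1'|\Phi^2\,dx_1\,dt$. Your further integration by parts gives $\bar u_1\Phi^2\big|_{x_1=0}+2\int\bar u_1\Phi\,\p_1\Phi$, and then Cauchy--Schwarz with $|\bar u_1|\le C\delta$ produces, after time integration, a term of the form $\epsilon\int_0^T\|\Phi\|^2_{L^2(\R^+)}\,dt$ (plus $C_\epsilon\delta^2\int_0^T\|\p_1\Phi\|^2\,dt$, which is fine). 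But $\int_0^T\|\Phi\|^2\,dt$ appears nowhere on either side of \eqref{C11}: the left side only gives $\sup_t\|\Phi\|^2$, not its time integral, and there is no dissipation on $\Phi$ itself. This cannot be repaired later — $\Phi$ is a zero-mode quantity, so the Poincar\'e mechanism used for the non-zero modes does not apply, and $\int_0^T\|\p_1\Phi\|^2\,dt$ (which Lemma~\ref{CLem2} does control) does not bound $\int_0^T\|\Phi\|^2\,dt$ on the half line. A weighted Cauchy--Schwarz such as $|\bar u_1\Phi\,\p_1\Phi|\le\epsilon|\bar u_1'|\Phi^2+C_\epsilon\frac{\bar u_1^2}{|\bar u_1'|}|\p_1\Phi|^2$ fares no better, since by Lemma~\ref{lem1}(ii) the ratio $\bar u_1^2/|\bar u_1'|\sim e^{c\delta|x_1|}$ is unbounded. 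In short, the obstruction you identify as the ``main difficulty'' of the problem is not actually removed by your split; the paper's multiplier $(\Phi,\Psi/\bar w)$ is precisely what eliminates the $\bar w_{x_1}\Phi\Psi$ and $\bar w_t\Phi^2$ terms before they ever arise, and you should adopt it.
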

 \begin{proof}

 Multiplying $\Phi$ on $( {\ref{equ-anti}})_1\ \mathrm{and}\ \frac\Psi{\bar{w}}$ on $( {\ref{equ-anti}})_2$, respectively, and summing the resulting equations up, one can get that
\begin{align}\label{C1}
\begin{aligned}
&\left(\frac{\Phi^2}{2}+\frac{\Psi^2}{2 {\bar{w}}}\right)_{t}+ \beta \Psi^2 +\frac{\tilde{\mu}}{{\bar{w}} \bar{\rho}}\Psi^{2}_{x_1}
=(\cdots)_{x_1}  +\tilde{\mu}\frac{{\bar{w}}_{x_1}}{{\bar{w}}^2 \bar{\rho}}\Psi\Psi_{x_1}+  \tilde{\mu}\frac{{\bar{w}}_{x_1}}{{\bar{w}}  \bar{\rho}}\bar{u}_1 \Psi_{x_1}\Phi_{x_1}       +    \frac{   \tilde{\mu} \bar{  u }_{1}     }{{\bar{w}}  \bar{\rho}} \Psi_{x_1} \Phi_{x_1} \\
 &\qquad-\frac{\tilde{\mu} \Psi}{{\bar{w}}}\left[  \left(\frac{1}{\bar{\rho}}\right)_{x_{1}} \bar{  u }_{1}  \Phi_{x_1} + \left(\frac{1}{\bar{\rho}}\right) \bar{  u }_{1}'  \Phi_{x_1} + \left(\frac{1}{\bar{\rho}}\right) \bar{  u }_{1}  \Phi_{x_1 x_1}  \right] +  \frac{\Psi}{{\bar{w}}}\left [\Do\mathcal{N}_{11}+ \left(\Do\mathcal{N}_{21}\right) _{x_1}   \right ],
 \end{aligned}
 \end{align}
where
$$\beta=-\left(\frac{1}{2{\bar{w}}}\right)_t-\left(\frac{\bar{u}}{{\bar{w}}}\right)_{x_1},$$
and
$$\left(\cdots\right)=-\Phi\Psi-\frac{\bar{u}_{1}{}}{{\bar{w}}}\Psi^{2}+\frac{\tilde{\mu}}{{\bar{w}}\bar{\rho}{}}\Psi(\Psi_{x_1}-\bar{u}_{1}\Phi_{x_1}) .$$
Moreover, if $\delta> 0$ is small, by (\ref{C39}),  one gets that
\begin{equation}\label{C3}
{\bar{w}}=p'(\bar{\rho}^s)-|\bar{u}_1^s|^2\geqslant\frac12p'(\bar{\rho}^s)>0,\ \quad\forall\  x\in\Omega,\ t\geqslant0.
\end{equation}
By$(\ref{NS1})_1$, (\ref{C3}) and  Lemma \ref{lem1}, one  can  verify  that
\begin{equation}
\begin{aligned}
\beta =\frac1{2{\bar{w}}^2}\big(\partial_t{\bar{w}}+2\bar{u}_1{}\partial_{1}{\bar{w}}-2(\bar{u}_1{})'{\bar{w}}\big) =-\frac{1}{2{\bar{w}}^{2}}\Big(\frac{s\bar{\rho}{}}{s-\bar{u}_{1}{}}p^{\prime\prime}(\bar{\rho}{})+2p^{\prime}(\bar{\rho}{})\Big)(\bar{u}_{1}{})^{\prime}+O(1)\:|\bar{u}_{1}{}|\:|(\bar{u}_{1}{})^{\prime}|\:.
\end{aligned}
\end{equation}
Thus, if $\delta> 0$ is small, by  (\ref{C7})  and the fact that $( \bar{u} _1{}) ^\prime< 0, $ one has that
\begin{equation}\label{C9}
\beta\geq  c_0 |(\bar{u}_1{})^{\prime}|, \quad \quad       \forall\   x\in\Omega,\ t\geqslant0.
\end{equation}
Integrating (\ref{C1}) with respect to $t$ and $x_1$ over $ [0,t]\times \mathbb{R}^{+}$, we have
\begin{equation}\label{C2}
\begin{aligned}
&\int_{\mathbb{R}^{+}} \left(\frac{\Phi^2}{2}+\frac{\Psi^2}{2 {\bar{w}}}\right)  \mathrm{d}x_1 +\int_{0}^{t} \int_{\mathbb{R}^{+}} \beta \Psi^2 \mathrm{d}x_1\mathrm{d}\tau+\int_{0}^{t}\int_{\mathbb{R}^{+}} \frac{\tilde{\mu}}{{\bar{w}} \bar{\rho}}\Psi^{2}_{x_1}\mathrm{d}x_1\mathrm{d}\tau+\int_{0}^{t}  \frac{-\bar{u}_{1}{}}{{\bar{w}}}\Psi^{2} \Big|_{x_1=0}  \mathrm{d}\tau \\
&=\int_{\mathbb{R}^{+}} \left(\frac{\Phi_{0}^2}{2}+\frac{\Psi_{0}^2}{2 {\bar{w}}}\right)  \mathrm{d}x_1 +\int_{0}^{t} \left[\Phi\Psi-\frac{\tilde{\mu}}{{\bar{w}}\bar{\rho}{}}\Psi(\Psi_{x_1}-\bar{u}_{1}{}\Phi_{x_1})\right]\Big|_{x_1=0}  \mathrm{d}\tau +\int_{0}^{t} \int_{\mathbb{R}^{+}} \tilde{\mu}\frac{{\bar{w}}_{x_1}}{{\bar{w}}^2 \bar{\rho}}\Psi \Psi_{x_1}  \mathrm{d}x_1\mathrm{d}\tau\\
&+ \int_{0}^{t} \int_{\mathbb{R}^{+}} \frac{   \tilde{\mu} \bar{  u }_{1}     }{{\bar{w}}  \bar{\rho}} \Psi_{x_1} \Phi_{x_1}  \mathrm{d}x_1\mathrm{d}\tau-    \int_{0}^{t} \int_{\mathbb{R}^{+}} \frac{\tilde{\mu} \Psi}{{\bar{w}}}\left[  \left(\frac{1}{\bar{\rho}}\right)_{x_{1}} \bar{  u }_{1}  \Phi_{x_1} + \frac{1}{\bar{\rho}} \bar{  u }_{1}'  \Phi_{x_1} + \frac{1}{\bar{\rho}}\bar{  u }_{1}  \Phi_{x_1 x_1}  \right]\mathrm{d}x_1 \mathrm{d}\tau \\
 & + \int_{0}^{t} \int_{\mathbb{R}^{+}} \left(\frac{\Psi}{{\bar{w}}} \Do\mathcal{N}_{11}   +   \frac{\Psi}{{\bar{w}}}  \left(\Do\mathcal{N}_{21}\right) _{x_1}  \right)  \mathrm{d}x_1 \mathrm{d}\tau :=\int_{\mathbb{R}^{+}} \left(\frac{\Phi_{0}^2}{2}+\frac{\Psi_{0}^2}{2 {\bar{w}}}\right)  \mathrm{d}x_1 +\sum_{i=1}^5 I _1^i.
 \end{aligned}
 \end{equation}
We now estimate each $I_1^i$ from $i=1$ to 6. With the aid of Lemma \ref{CLem0} and Cauchy inequality, one gets that
\begin{equation}\label{C25}
\begin{aligned}
  I _{1}^{1} \leq  & C  \delta+   \int_{0}^{t}  \frac{-\bar{u}_{1}{}}{2 {\bar{w}}}\Psi^{2} \Big|_{x_1=0}  \mathrm{d}\tau +  \frac{\tilde{\mu}^2}{2 {\bar{w}} \bar{\rho}}  \int_{0}^{t} |\bar{u}_{1}| \partial_{1}\Phi^2 \Big|_{x_1=0}  \mathrm{d}\tau\\
  \leq  & C  \delta+   \int_{0}^{t}  \frac{-\bar{u}_{1}{}}{2 {\bar{w}}}\Psi^{2} \Big|_{x_1=0}  \mathrm{d}\tau +C    \int_{0}^{t}   |\bar{u}_{1}| \partial_{1}\Phi^2 \Big|_{x_1=0}  \mathrm{d}\tau,
\end{aligned}
\end{equation}
where we have used (\ref{C7}), (\ref{C39}) and (\ref{C3}) in the last inequality. It follows from Lemma $\ref{lem1}$ and $ {(\ref{C7})}$ that
\begin{equation}
\begin{aligned}
  I _{1}^{2} \leq C  \int_{0}^{t}  \int_{\mathbb{R}^{+}} |  \bar{u}_{x_1}  \Psi \Psi_{x_1}|         d x_1  d\tau \leq &    \frac{c_0}{2}  \int_{0}^{t} \left\|    |\bar{u}_{x_1}|^{\frac{1}{2}}  \Psi \right\|_{L^2(\mathbb{R}^{+})}^2 d\tau+   C \delta\int_{0}^{t}  \left\|  \Psi_{x_1}  \right\|_{L^2(\mathbb{R}^{+})}^2d\tau,
\end{aligned}
\end{equation}
and
\begin{equation}
I _{1}^{3} \leq C \delta
 \int_{0}^{t} \left\|(\partial_{1}\Phi,\partial_{1}\Psi)\right\|_{L^{2}(\mathbb{R}^{+})}^{2}d\tau.
\end{equation}
By Cauchy inequality, one gets that
\begin{equation}
\begin{aligned}
I _{1}^{4} &\leq  \int_{0}^{t} \int_{\mathbb{R}^{+}}  \frac{\beta}{2}  \Psi^2 \mathrm{d}x_1 \mathrm{d}\tau+ C   \int_{0}^{t} \int_{\mathbb{R}^{+}}  \bar{ \rho }_{x_1} ^2  \Phi_{x_1  }^2  \mathrm{d}x_1 \mathrm{d}\tau+ C   \int_{0}^{t} \int_{\mathbb{R}^{+}}  \bar{  u }_{x_1}^2   \Phi_{  x_1} ^2  \mathrm{d}x_1 \mathrm{d}\tau\\
&\qquad+ C   \int_{0}^{t} \int_{\mathbb{R}^{+}}  \bar{  u }_{1}^2   \Phi_{x_1 x_1} ^2  \mathrm{d}x_1 \mathrm{d}\tau \\
\leq & \int_{0}^{t} \int_{\mathbb{R}^{+}}  \frac{\beta}{2}  \Psi^2 \mathrm{d}x_1 \mathrm{d}\tau+ C  \delta \int_{0}^{t} \int_{\mathbb{R}^{+}}    \Phi_{x_1  }^2  \mathrm{d}x_1 \mathrm{d}\tau +   C   \delta\int_{0}^{t}    \|\phi_{ x_1}\| ^2   \mathrm{d}\tau.
\end{aligned}
\end{equation}
Using $(\ref{q}), (\ref{C4}) $ and Lemma \ref{Lem-GN}, the term $I_{1}^{5}$ satisfies that
\begin{equation}\label{C13}
\begin{aligned}
& I_{1}^{5} \leq C \int_{0}^{t}\left\|\Psi\right\|_{L^{\infty}(\mathbb{R}^{+})} \left\|(\phi,\zeta_{1})\right\|_{L^{2}(\Omega)}^{2} d\tau\leq C  \int_{0}^{t} \chi\left\|(\phi,\zeta_1)\right\|_{L^2(\Omega)}^2 d\tau.
\end{aligned}
\end{equation}
By (\ref{eq15}), the non-zero mode of $(\phi,\zeta_1)$ satisfies that
$$
\int_{\mathbb{T}^2}(\acute\phi ,\acute\zeta_1)dx_2dx_3=0,\quad\forall \ x_1\in\mathbb{R}^+,\ \ t\geqslant0.
$$
It follows that
\begin{equation}\label{C12}
\|\phi \|_{L^2(\Omega)} \leq  C ( \|\partial_{1} \Phi \|_{L^2(\Omega)}+ \|\nabla  \phi \|_{L^2(\Omega)} ).
\end{equation}
{It is similar to proving that}
\begin{equation}\label{C10}
\left\|\zeta_{1}\right\|_{L^{2}(\Omega)}^{2}\leqslant\left\|\partial_{1}\Psi\right\|_{L^{2}(\mathbb{R}^+)}^{2}+\left\|\nabla\zeta_{1}\right\|_{L^{2}(\Omega)}^{2}.
\end{equation}
Collecting (\ref{C3}) and (\ref{C9}) to (\ref{C10}), one can  obtain (\ref{C11}) directly.
\end{proof}

  \begin{Lem}\label{CLem2}
For $T>0$ and $(\phi, \psi) \in X(0, T)$, under   the  assumptions of Proposition \ref{Cproposition}  with suitably small $\chi+\delta$, for $t \in[0, T]$, one has
\begin{equation}\label{eqC27}
\begin{gathered}
\operatorname*{sup}_{t\in(0,T)}\left\|\partial_{1}\Phi\right\|_{L^{2}(\mathbb{R}^+)}^{2}+\int_{0}^{T}\left\|\partial_{1}\Phi\right\|_{L^{2}(\mathbb{R}^+)}^{2} +    |\partial_{1}\bar{u}_{1}| \Phi_{x_1}^{2} \Big|_{x_1=0} dt \\
\leq C\left\|(\Phi_0,\Psi_0)\right\|_{L^2(\mathbb{R}^+)}^2+C\left\|\phi_0\right\|_{L^2(\Omega)}^2
+C\chi\int_0^T\left\|\nabla(\phi,\psi_1)\right\|_{L^2(\Omega)}^2
dt+C\delta.
\end{gathered}\end{equation}
\end{Lem}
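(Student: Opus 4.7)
The strategy is to test the momentum equation \eqref{equ-anti}$_2$ with the multiplier $\Phi_{x_1}$ and integrate over $\R^+$, since by \eqref{C3} one has $\bar w\ge\tfrac12 p'(\bar\rho)>0$, so that the interaction term $\int \bar w\,\Phi_{x_1}^2\,dx_1$ supplies the coercive $\|\partial_1\Phi\|_{L^2(\R^+)}^2$ on the left-hand side. The time-derivative piece is reduced via the continuity equation $\Phi_t=-\Psi_{x_1}$ to
\[
\int_{\R^+}\Psi_t\Phi_{x_1}\,dx_1=\frac{d}{dt}\!\int_{\R^+}\Psi\,\Phi_{x_1}\,dx_1+\int_{\R^+}\Psi\,\Psi_{x_1 x_1}\,dx_1,
\]
and one more integration by parts converts the last integral to $-\int \Psi_{x_1}^2\,dx_1-(\Psi\,\Psi_{x_1})|_{x_1=0}$. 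The volume $\int\Psi_{x_1}^2$ is already on the LHS of \cref{CLem1}, while the boundary piece is controlled by $C\delta$ via \cref{CLem0}. After time integration and taking $\sup_t$, the cross term $\int\Psi\,\Phi_{x_1}\,dx_1$ is split by Cauchy--Schwarz; its $\|\Psi\|^2$ half is absorbed by \cref{CLem1}, and its $\|\Phi_{x_1}\|^2$ half is swallowed into the LHS.

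The viscous contribution $\int\tilde\mu\,\p_1\bigl[\bar\rho^{-1}(\Psi_{x_1}-\bar u_1\Phi_{x_1})\bigr]\Phi_{x_1}\,dx_1$ is integrated by parts once, producing a boundary contribution at $x_1=0$ and a volume term $-\int\tilde\mu\bar\rho^{-1}(\Psi_{x_1}-\bar u_1\Phi_{x_1})\Phi_{x_1 x_1}\,dx_1$. The dominant interior subpiece $\int\tilde\mu\bar u_1\bar\rho^{-1}\Phi_{x_1}\Phi_{x_1 x_1}\,dx_1$ is rewritten using $\Phi_{x_1}\Phi_{x_1 x_1}=\tfrac12(\Phi_{x_1}^2)_{x_1}$ and a further IBP: this yields exactly the boundary term on the LHS of \eqref{eqC27} (with the correct positive sign, since for a $2$-shock with $u_{1-}=0$ one has $\bar u_1|_{x_1=0}<0$), together with a volume remainder controlled by \cref{lem1} that is either of the right sign or absorbable for small $\delta$. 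The remaining viscous subpiece $\int\tilde\mu\bar\rho^{-1}\Psi_{x_1}\Phi_{x_1 x_1}\,dx_1$ is treated by Cauchy as $\eta\|\Phi_{x_1 x_1}\|^2+C_\eta\|\Psi_{x_1}\|^2$ and thereby fed to \cref{CLem1}; the transport term $\int 2\bar u_1\Psi_{x_1}\Phi_{x_1}$ is absorbed similarly using $|\bar u_1|\le C\delta$ from \eqref{C7}, with the $\Phi_{x_1}$-half absorbed into the LHS and the $\Psi_{x_1}$-half passed to \cref{CLem1}.

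For the nonlinear source $\Do\mathcal{N}_{11}+\p_1\Do\mathcal{N}_{21}$, the quadratic structure $\mathcal{N}_{ij}=O(|\phi|^2+|\psi_1|^2)$ from \eqref{q} combined with \cref{Lem-GN} and the a priori bound $\|(\phi,\zeta)\|_{H^2}\le\chi$ yields a contribution of the form $C\chi\int\|\nabla(\phi,\psi_1)\|^2\,d\tau$ (for $\p_1\Do\mathcal{N}_{21}$ one first IBPs to shift the derivative onto $\Phi_{x_1}$, at the cost of a boundary term at $x_1=0$ that is bounded exactly as in the proof of \cref{CLem0}). Collecting all pieces, using \eqref{C12} to convert $\|\mathring\phi\|$ losses into $\|\nabla\phi\|$ losses where needed, and finally choosing $\delta+\chi$ sufficiently small to absorb any leftover $\delta\|\phi_{x_1}\|^2$ into the LHS, we arrive at \eqref{eqC27}. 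The main obstacle is the feedback loop with \cref{CLem1}: the RHS of \eqref{C11} contains a weighted boundary integral involving $\Phi_{x_1}^2|_{x_1=0}$, which must be produced here with the right constant so that, when the two estimates are combined in a subsequent step, the boundary pieces absorb rather than propagate; keeping this bookkeeping consistent, in concert with the bad-sign volume terms flagged in the abstract, is the delicate part of the argument.
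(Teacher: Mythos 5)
Your overall strategy is the right one (same multiplier $\Phi_{x_1}$ on \cref{equ-anti}$_2$, same coercive term $\int\bar w\,\Phi_{x_1}^2$, same use of \cref{CLem0} on the boundary), but there is a genuine gap in the treatment of the viscous term that makes the $\sup_t\|\partial_1\Phi\|^2$ part of \eqref{eqC27} unprovable by the route you describe.

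The paper does \emph{not} integrate the viscous term by parts. Instead it expands the outer $\partial_1$ and then uses the crucial algebraic identity $\partial_1^2\Psi=-\partial_t\partial_1\Phi$ (from $\Phi_t=-\Psi_{x_1}$) on the sub-piece $-\tilde\mu\bar\rho^{-1}\Psi_{x_1x_1}\Phi_{x_1}$, producing
\[
-\tilde\mu\bar\rho^{-1}\Psi_{x_1x_1}\Phi_{x_1}
=\partial_t\!\Big(\frac{\tilde\mu}{2\bar\rho}\,|\partial_1\Phi|^2\Big)+\text{(lower order)},
\]
so that after integrating in $t$, the LHS acquires $\frac{\tilde\mu}{2\bar\rho}\|\partial_1\Phi(t)\|^2$ — the term that actually delivers the $\sup_t$-bound. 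In your proposal you IBP once and then treat the resulting piece $\int\tilde\mu\bar\rho^{-1}\Psi_{x_1}\Phi_{x_1x_1}$ by Cauchy as $\eta\|\Phi_{x_1x_1}\|^2+C_\eta\|\Psi_{x_1}\|^2$. This step discards the time-derivative structure entirely: after your time integration, the only pointwise-in-$t$ term on the LHS is $\int\Psi\,\Phi_{x_1}\,dx_1\big|_{\tau=t}$, which is of indefinite sign. Your claim that the $\|\Phi_{x_1}\|^2$ half of the Cauchy-Schwarz split of that cross term is "swallowed into the LHS" is circular — there is nothing on the LHS to swallow it into, precisely because the $\partial_t(\frac{\tilde\mu}{2\bar\rho}|\Phi_{x_1}|^2)$ term never appeared. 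Moreover the Cauchy step also produces $\eta\|\Phi_{x_1x_1}\|^2\lesssim\eta\|\nabla\phi\|^2_{L^2(\Omega)}$ with an $O(1)$ constant (taking $\eta$ as small as $O(\chi)$ blows up $C_\eta\|\Psi_{x_1}\|^2$), which is strictly larger than the $C\chi\|\nabla\phi\|^2$ in \eqref{eqC27}. The paper's identity avoids both defects simultaneously, because the dangerous interaction goes into $\partial_t(\frac{\tilde\mu}{2\bar\rho}|\Phi_{x_1}|^2)$ and $\partial_1(\frac{\tilde\mu\bar u_1}{2\bar\rho}|\Phi_{x_1}|^2)$ (a pure $x_1$-divergence giving the boundary term), with \emph{no} $\|\Phi_{x_1x_1}\|^2$ ever entering. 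The fix for your argument is to convert $\Psi_{x_1}\Phi_{x_1x_1}$ back to $-\Psi_{x_1x_1}\Phi_{x_1}$ (plus a boundary contribution) and then apply $\Psi_{x_1x_1}=-\Phi_{x_1t}$, which is exactly the paper's move.

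A smaller inconsistency: the boundary weight you actually produce from $\tfrac12\int\tilde\mu\bar u_1\bar\rho^{-1}(\Phi_{x_1}^2)_{x_1}$ is $|\bar u_1|$, not the $|\partial_1\bar u_1|$ printed in \eqref{eqC27}; since $|\partial_1\bar u_1|\le\delta|\bar u_1|$ (from $|(\bar u_1)''|\le\delta|(\bar u_1)'|$ and $\bar u_1(-\infty)=0$), your weight is actually stronger and the conclusion is not affected, but it is worth noticing that it is the $|\bar u_1|$-weighted boundary term — not the $|\partial_1\bar u_1|$ one — that must dominate the $|\bar u_1|$-weighted boundary term on the right of \cref{CLem1} when the two lemmas are combined in \cref{CLem5}.
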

\begin{proof}
It follows  from $(\ref{equ-anti})_1$ that
\begin{equation}
   \partial_{t}\Psi\partial_{1}\Phi=\partial_{t}\big(\Psi\partial_{1}\Phi\big)+\partial_{1}\big(\Psi\partial_{1}\Psi\big)-\big|\partial_{1}\Psi\big|^{2},
\end{equation}
 and
\begin{equation}
\begin{aligned}
&-\tilde{\mu}\partial_{1}\Big[\frac{1}{\bar{\rho}{}}\big(\partial_{1}\Psi-\bar{u}_{1}{}\partial_{1}\Phi\big)\Big]\partial_{1}\Phi\\
&
=\partial_{t}\bigg(\frac{\tilde{\mu}}{2\bar{\rho}{}}|\partial_{1}\Phi|^{2}\bigg) +\frac{\tilde{\mu}\partial_{1}\bar{\rho}{}}{|\bar{\rho}{}|^{2}}\partial_{1}\Psi\partial_{1}\Phi -\frac{\tilde{\mu}}{2}\Big[\partial_{t}\Big(\frac{1}{\bar{\rho}{}}\Big)-\partial_{1}\Big(\frac{\bar{u}_{1}{}}{\bar{\rho}{}}\Big)\Big]\left|\partial_{1}\Phi\right|^{2}+\partial_{1}\Big(\frac{\tilde{\mu}\bar{u}_{1}{}}{2\bar{\rho}{}}|\partial_{1}\Phi|^{2}\Big). \\
\end{aligned}
\end{equation}
 Then multiplying $\partial_1\Phi$ on $(\ref{equ-anti})_2$, and using the above two equities, one can get that
\begin{equation}\label{C21}
\begin{aligned}
&\partial_{t}\Big(\frac{\tilde{\mu}}{2\bar{\rho}{}}\left|\partial_{1}\Phi\right|^{2}+\Psi\partial_{1}\Phi\Big)+{\bar{w}}\left|\partial_{1}\Phi\right|^{2}+\left(\frac {\tilde{\mu} \bar{u} _{1}{}}{2\bar{\rho} {}}\left | \partial_{1}\Phi\right | ^{2}\right)_{x_1}\\
=&-\partial_{1}( \Psi\partial_{1}\Psi) {}-\left(2\bar{u}_{1}{}+\frac{\tilde{\mu}\partial_{1}\bar{\rho}{}}{\left|\bar{\rho}{}\right|^{2}}\right)\partial_{1}\Phi\partial_{1}\Psi+\frac{\tilde{\mu}}{2}\Big[\partial_{t}\Big(\frac{1}{\bar{\rho}{}}\Big)-\partial_{1}\Big(\frac{\bar{u}_{1}{}}{\bar{\rho}{}}\Big)\Big]|\partial_{1}\Phi|^{2}+|\partial_{1}\Psi|^{2}\\
&+\Do\mathcal{N}_{11}\partial_1\Phi+\p_1\left(\Do\mathcal{N}_{21}\right) \partial_1\Phi.
 \end{aligned}
\end{equation}
Integrating (\ref{C21}) with respect to $t$ and $x$ over $ [0,t]\times\Omega 
$, we have
\begin{equation}\label{C22}
\begin{aligned}
& \int_{\Omega} \Big(\frac{\tilde{\mu}}{2\bar{\rho}{}}\left|\partial_{1}\Phi\right|^{2}+\Psi\partial_{1}\Phi\Big)(t)\mathrm{d}x
+{\bar{w}}\int_{0}^{t}  \left\|\partial_{1}\Phi\right\|^{2} \mathrm{d}\tau
+\int_{0}^{t} \int_{\mathbb T^2}\frac {\tilde{\mu} \bar{u} _{1}{}}{2\bar{\rho} {}}\left | \partial_{1}\Phi\right | ^{2}\Big|_{x_1=0}\mathrm{d}x_2\mathrm{d}x_3 \mathrm{d}\tau\\
=&\int_{\Omega} \Big(\frac{\tilde{\mu}}{2\bar{\rho}{}}\left|\partial_{1}\Phi_0\right|^{2}+\Psi_0\partial_{1}\Phi_0\Big)\mathrm{d}x -\int_{0}^{t} \int_{\mathbb T^2}  \Psi\partial_{1}\Psi   \Big|_{x_1=0}\mathrm{d}x_2\mathrm{d}x_3d\tau+\int_{0}^{t} \int_{\Omega}-\left(2\bar{u}_{1}{}+\frac{\tilde{\mu}\partial_{1}\bar{\rho}{}}{\left|\bar{\rho}{}\right|^{2}}\right)\partial_{1}\Phi\partial_{1}\Psi\\
&+\frac{\tilde{\mu}}{2}\Big[\partial_{\tau}\Big(\frac{1}{\bar{\rho}{}}\Big)-\partial_{1}\Big(\frac{\bar{u}_{1}{}}{\bar{\rho}{}}\Big)\Big]|\partial_{1}\Phi|^{2}\mathrm{d}x \mathrm{d}\tau+\int_{0}^{t}  \|\partial_{1}\Psi\|^{2} \mathrm{d}\tau +\int_{0}^{t} \int_{\Omega}\Do\mathcal{N}_{11}\partial_1\Phi+\p_1\left(\Do\mathcal{N}_{21}\right) \partial_1\Phi\mathrm{d}x \mathrm{d}\tau\\
:=&\int_{\Omega} \Big(\frac{\tilde{\mu}}{2\bar{\rho}{}}\left|\partial_{1}\Phi_0\right|^{2}+\Psi_0\partial_{1}\Phi_0\Big)\mathrm{d}x+\sum_{i=1}^4 I_2^i.
 \end{aligned}
\end{equation}
Using Lemma \ref{CLem0}, similar to (\ref{C25}), one gets that
\begin{equation}
 I_{2}^1 \leq C\delta .
\end{equation}
By Lemma \ref{lem1} and (\ref{C7}), one has that
\begin{equation}
 I_{2}^2 \leq C\delta \int_{0}^{t} \left\|\partial_{1}\Phi\right\|_{L^{2}(\mathbb{R}^+)}^{2} d\tau+ C \int_{0}^{t}\left\|\partial_{1}\Psi\right\|_{L^{2}(\mathbb{R}^+)}^{2}d\tau.
\end{equation}
Using the Cauchy inequality, one gets that
\begin{equation}\label{C26}
 \begin{aligned}
 I_2^{4} & \leq C\int_{0}^{t}  \frac{{\bar{w}}}{2}\|\partial_{1}\Phi\|^{2} \mathrm{d}\tau +\int_{0}^{t} \int_{\Omega}\left(\Do\mathcal{N}_{11} \right)^2+\left[\p_1\left(\Do\mathcal{N}_{21}\right)\right] ^2 \mathrm{d}x \mathrm{d}\tau\\
&  \leq C\int_{0}^{t}  \frac{{\bar{w}}}{2}\|\partial_{1}\Phi\|^{2} \mathrm{d}\tau + C  \int_{0}^{t}  \|(\phi,\zeta_1) \|_{L^\infty}^2 \|(\phi,\zeta_1, \p_1\phi,\p_1\zeta_1)\|
  d\tau\\
&  \leq C\int_{0}^{t}  \frac{{\bar{w}}}{2}\|\partial_{1}\Phi\|^{2} \mathrm{d}\tau + C \chi \int_{0}^{t}\left( \left\|\partial_{1}(\Phi,\Psi)\right\|_{L^{2}(\mathbb{R}^+)}^{2}+\left\|\nabla(\phi,\zeta_{1})\right\|\right)
d\tau,
\end{aligned}
\end{equation}
 where we have used (\ref{C4}),  (\ref{C12}),  (\ref{C10}) and Lemma \ref{Lem-GN} in the last inequality.
Note that $\Phi_0^{\prime}={\phim_0} $, then   $\|\Phi_0^{\prime}\|_{L^2(\mathbb{R}^+)}\leq C\|\phi_0\|_{L^2(\Omega)}.$ Collecting (\ref{C22})-(\ref{C26}), we    obtain (\ref{eqC27}).
\end{proof}

\begin{Lem}\label{CLem3}
For $T>0$ and $(\phi, \psi) \in X(0, T)$, under   the  assumptions of Proposition \ref{Cproposition}  with suitably small $\chi+\delta$, we have for $t \in[0, T]$,

\begin{equation}\label{C14}
\|\nabla\psi_{1}\|_{L^{2}(\Omega)}\leq C\left[\|\partial_{1}(\Phi,\Psi)\|_{L^{2}(\mathbb{R}^+)}+\|\nabla\zeta_{1}\|_{L^{2}(\Omega)}+(\delta +\chi)\:\|\nabla\phi\|_{L^{2}(\Omega)}\right],
\end{equation}
 and
\begin{equation}
\begin{aligned}
\|\zeta_1\|_{L^2(\Omega)}&\le C\left[\|\partial_1(\Phi,\Psi)\|_{L^2(\mathbb{R}^+)}+\|\nabla\zeta_1\|_{L^2(\Omega)}+(\delta +\chi)\:\|\nabla\phi\|_{L^2(\Omega)}\right].
\end{aligned}
\end{equation}
\end{Lem}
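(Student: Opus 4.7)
The plan is to anchor both estimates to the pointwise algebraic identity
\[
\psi_1 \;=\; m_1 - \bar m_1 \;=\; (\bar\rho+\phi)(\bar u_1+\zeta_1) - \bar\rho\,\bar u_1 \;=\; \rho\,\zeta_1 + \bar u_1\,\phi,
\]
obtained by substituting $\rho=\bar\rho+\phi$ and $u_1=\bar u_1+\zeta_1$ into \cref{eq18}. Equivalently, $\zeta_1=\rho^{-1}(\psi_1-\bar u_1\phi)$. Taking $L^2(\Omega)$ norms and using $\rho\ge\tfrac12\rho_-$ from \cref{C39} together with $|\bar u_1|\le C\delta$ from \cref{C7} gives $\|\zeta_1\|\le C\|\psi_1\|+C\delta\|\phi\|$, so the whole lemma reduces to controlling $\|\psi_1\|$ and $\|\nabla\psi_1\|$ by the desired right-hand side.

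For the second inequality I will use the orthogonal splitting $\|\psi_1\|^2=\|\mathring\psi_1\|_{L^2(\mathbb R^+)}^2+\|\acute\psi_1\|^2$ from \cref{prop-decom}(iii). The zero-mode piece is handled directly by the anti-derivative construction \cref{eq19}, which gives $\mathring\psi_1=\partial_1\Psi$. For the non-zero piece, Poincar\'e's inequality on $\mathbb T^2$ (applicable since $\int_{\mathbb T^2}\acute\psi_1\,dx'=0$) yields $\|\acute\psi_1\|\le C\|\nabla_{x'}\psi_1\|$. Since $\bar\rho$ and $\bar u_1$ depend only on $x_1$, differentiating the identity tangentially gives
\[
\nabla_{x'}\psi_1 \;=\; \rho\,\nabla_{x'}\zeta_1 \;+\; (\bar u_1+\zeta_1)\,\nabla_{x'}\phi,
\]
which via \cref{C7} and the Sobolev bound $\|\zeta_1\|_{L^\infty}\le C\chi$ from \cref{C4} is controlled by $C\|\nabla\zeta_1\|+C(\delta+\chi)\|\nabla\phi\|$. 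Finally \cref{C12} disposes of the residual $C\delta\|\phi\|$ in favor of $\|\partial_1\Phi\|_{L^2(\mathbb R^+)}$ and $\|\nabla\phi\|$, closing the second inequality.

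For the first inequality the tangential derivatives $\partial_j\psi_1$ ($j=2,3$) are already covered by the computation above. Differentiating the identity in $x_1$ produces the two additional shock-profile terms
\[
\partial_1\psi_1 \;=\; \rho\,\partial_1\zeta_1 \;+\; (\bar u_1+\zeta_1)\,\partial_1\phi \;+\; \bar\rho'\,\zeta_1 \;+\; \bar u_1'\,\phi.
\]
Lemma \ref{lem1} provides $|\bar\rho'|,|\bar u_1'|\le C\delta$, so these extra terms are bounded by $C\delta(\|\zeta_1\|+\|\phi\|)$. I will absorb them through \cref{C10} for $\|\zeta_1\|$ and \cref{C12} for $\|\phi\|$, each of which costs only $\|\partial_1(\Phi,\Psi)\|_{L^2(\mathbb R^+)}$, $\|\nabla\zeta_1\|$ and $\|\nabla\phi\|$, producing exactly the right-hand side of the claimed bound on $\|\nabla\psi_1\|$.

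The proof is in effect pure bookkeeping once the pointwise identity is in hand; the only delicate point is that the troublesome shock-derivative factors $\bar\rho'$ and $\bar u_1'$ multiply the undifferentiated quantities $\zeta_1$ and $\phi$ rather than their gradients. This would normally be a concern, but these factors carry an honest $O(\delta)$, and the already-established zero-mode estimates \cref{C10} and \cref{C12} convert $\delta\|\zeta_1\|$ and $\delta\|\phi\|$ into exactly the terms tolerated on the right-hand side. I do not anticipate any substantive obstacle beyond tracking the smallness factors $\delta$ and $\chi$ throughout.
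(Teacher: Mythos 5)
Your proposal is correct and follows essentially the same route as the paper's proof: the identity $\psi_1=\rho\zeta_1+\bar u_1\phi$, the zero-/non-zero-mode decomposition with Poincar\'e on $\mathbb{T}^2$, the smallness $|\bar u_1|,\,|(\bar u_1)'|,\,|(\bar\rho)'|=O(\delta)$ and $\|\zeta_1\|_{L^\infty}\le C\chi$, and the use of \cref{C12,C10} to trade the residual $\delta\|(\phi,\zeta_1)\|$ for $\|\partial_1(\Phi,\Psi)\|_{L^2(\mathbb{R}^+)}$, $\|\nabla\zeta_1\|$ and $\|\nabla\phi\|$. The only cosmetic difference is that for the $\|\zeta_1\|$ bound you invoke only the tangential part of $\nabla\psi_1$ directly, whereas the paper first establishes \cref{C14} for the full $\|\nabla\psi_1\|$ and then cites it.
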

\begin{proof}
Note that $\psi_1=\rho\zeta_1+\bar{u}_1\phi.$ It follows from  (\ref{C12}), (\ref{C10}), Lemma \ref{lem1} and Lemma \ref{Lem-GN} that
\begin{align*}
\begin{aligned}
\|\nabla\psi_{1}\|_{L^{2}(\Omega)}& \leq C\left[\left\|\nabla\zeta_{1}\right\|_{L^{2}(\Omega)}+\left(\delta+\chi\right)\left\|\nabla\phi\right\|_{L^{2}(\Omega)}+\delta\left\|(\phi,\zeta_{1})\right\|_{L^{2}(\Omega)}\right] \\
&\leq C\left[\left\|\nabla\zeta_{1}\right\|_{L^{2}(\Omega)}+\left(\delta+\chi\right)\left\|\nabla\phi\right\|_{L^{2}(\Omega)}+\delta\left\|\nabla\zeta_{1}\right\|_{L^{2}(\Omega)}+\delta\left\|\partial_{1}(\Phi,\Psi)\right\|_{L^{2}(\mathbb{R}^+)}\right].
\end{aligned}
\end{align*}
Thus, (\ref{C14}) holds true. Then with (\ref{C14}), one can use  (\ref{C12}), (\ref{C10}) and Lemma \ref{lem1} again to obtain
\begin{align*}
\begin{aligned}
\|\zeta_{1}\|_{L^{2}(\Omega)}& \leq C\left(\left\|\psi_{1}\right\|_{L^{2}(\Omega)}+\delta\left\|\phi\right\|_{L^{2}(\Omega)}\right)  \\
&\leq C\left[\|\partial_{1}\Psi\|_{L^{2}(\mathbb{R}^+)}+\|\nabla\psi_{1}\|_{L^{2}(\Omega)}+\delta \left(\|\partial_{1}\Phi\|_{L^{2}(\mathbb{R}^+)}+\|\nabla\phi\|_{L^{2}(\Omega)}\right)\right] \\
&\leq C\left[\|\partial_{1}(\Phi,\Psi)\|_{L^{2}(\mathbb{R}^+)}+\|\nabla\zeta_{1}\|_{L^{2}(\Omega)}+(\delta+\chi)\|\nabla\phi\|_{L^{2}(\Omega)}\right],
\end{aligned}
\end{align*}
which completes the proof.
\end{proof}

Using the above three lemmas, we directly get the estimate about ($\Phi,\Psi$).

\begin{Lem}\label{CLem5}
For $T>0$ and $(\phi, \psi) \in X(0, T)$, under the  assumptions of Proposition \ref{Cproposition}  with suitably small $\chi+\delta$, we have for $t \in[0, T]$,
\begin{equation}\label{C30}
\begin{aligned}
 &\sup_{t\in(0,T)} (\left\| \Phi\right\|_{H^{1}(\mathbb{R}^+)}^{2}+\left\| \Psi\right\|_{L^{2}(\mathbb{R}^+)}^{2})+\int_{0}^{T} \left\||(\bar{u}_{1}{})'|^{\frac{1}{2}}\Psi\right\|_{L^{2}(\mathbb{R}^+)}^{2}dt\\
 &+\int_{0}^{T}  \left(\left\|\partial_{1}(\Phi,\Psi)\right\|_{L^{2}(\mathbb{R}^+)}^{2}  +   |\bar{u}_{1}| \Psi^{2} \Big|_{x_1=0}       +    |\partial_{1}\bar{u}_{1}| \Phi_{x_1}^{2} \Big|_{x_1=0} \right)dt \\
\leq &C\left\|(\Phi_0,\Psi_0)\right\|_{L^2(\mathbb{R}^+)}^2+\left\|\phi_0\right\|_{L^2(\Omega)}^2  +C(\delta+\chi)\int_0^T\left\|(\nabla\phi,\nabla\zeta_1)\right\|_{L^2(\Omega)}^2 \mathrm{d}t + C\delta. \\
 \end{aligned}
\end{equation}

\end{Lem}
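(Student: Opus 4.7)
The plan is to linearly combine \cref{CLem1,CLem2,CLem3}. Since $\|\Phi\|_{H^1(\mathbb R^+)}^2 = \|\Phi\|_{L^2(\mathbb R^+)}^2 + \|\partial_1\Phi\|_{L^2(\mathbb R^+)}^2$, the left-hand side of \eqref{C30} is exactly the sum of the left-hand sides of \eqref{C11} and \eqref{eqC27}. I therefore add \eqref{C11} and $\kappa$ times \eqref{eqC27}, where $\kappa>0$ is a fixed constant independent of $\delta$ and $\chi$, chosen large enough for the absorption arguments below.

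Several right-hand side terms then need to be cleaned up. The $C(\delta+\chi)\int_0^T\|\partial_1\Phi\|_{L^2(\mathbb R^+)}^2\,dt$ term from \eqref{C11} is absorbed into $\kappa\int_0^T\|\partial_1\Phi\|_{L^2(\mathbb R^+)}^2\,dt$ (inherited from the LHS of \eqref{eqC27}), provided $\delta+\chi$ is small enough. The $C\chi\int_0^T\|\nabla(\phi,\psi_1)\|_{L^2(\Omega)}^2\,dt$ term on the RHS of \eqref{eqC27} is rewritten using \eqref{C14} as $C\chi\int_0^T\bigl(\|\partial_1(\Phi,\Psi)\|_{L^2(\mathbb R^+)}^2 + \|\nabla\zeta_1\|_{L^2(\Omega)}^2 + (\delta+\chi)^2\|\nabla\phi\|_{L^2(\Omega)}^2\bigr)\,dt$; the $\|\partial_1(\Phi,\Psi)\|^2$ piece is swallowed by the LHS dissipation for $\chi$ small, and the remainder merges into the allowed $C(\delta+\chi)\int_0^T\|\nabla(\phi,\zeta_1)\|_{L^2(\Omega)}^2\,dt$ on the RHS of \eqref{C30}. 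The leftover $C\delta\int_0^T\|\phi_{x_1}\|^2\,dt$ and $C\chi\int_0^T\|\nabla(\phi,\zeta_1)\|^2\,dt$ terms from \eqref{C11} fit directly into this same allowed structure.

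The main obstacle is the boundary term $C\int_0^T|\bar u_1|\partial_1\Phi^2|_{x_1=0}\,dt$ on the RHS of \eqref{C11}. Direct absorption into the LHS boundary dissipation $\int_0^T|\partial_1\bar u_1|\Phi_{x_1}^2|_{x_1=0}\,dt$ fails, because \eqref{C7} gives $|\bar u_1|_{x_1=0}\le C\delta$ while \cref{lem1}(ii) gives $|\partial_1\bar u_1|_{x_1=0}\le C\delta^2$, so the ratio between the two weights is of order $\delta^{-1}$. My idea is to exploit a 1-D trace inequality: since $\Phi_{x_1}=\phim$ depends only on $(t,x_1)$, we have $\Phi_{x_1}^2|_{x_1=0} = |\phim(t,0)|^2 \le C\|\phi\|_{L^2(\Omega)}\|\partial_1\phi\|_{L^2(\Omega)}$. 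Combining with $|\bar u_1|_{x_1=0}\le C\delta$, then applying Cauchy--Schwarz in $t$, Young's inequality, and \eqref{C12} to replace $\|\phi\|_{L^2(\Omega)}$ by $\|\partial_1\Phi\|_{L^2(\mathbb R^+)}+\|\nabla\phi\|_{L^2(\Omega)}$, yields a bound of the form $C\delta\int_0^T\|\partial_1\Phi\|_{L^2(\mathbb R^+)}^2\,dt + C\delta\int_0^T\|\nabla\phi\|_{L^2(\Omega)}^2\,dt$. The first is absorbed into the LHS dissipation, and the second is fitted into the allowed $C(\delta+\chi)\int_0^T\|\nabla(\phi,\zeta_1)\|_{L^2(\Omega)}^2\,dt$ on the RHS. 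Collecting all contributions delivers \eqref{C30}.
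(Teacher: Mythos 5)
Your proof is correct and follows the route the paper implies: add \eqref{C11} to a fixed multiple of \eqref{eqC27}, then use \eqref{C14} and \eqref{C12} to convert $\|\nabla\psi_1\|$ and $\|\phi\|$ into quantities that can be absorbed or placed on the allowed right-hand side. The one step that the paper's phrase ``we directly get'' glosses over --- and which you rightly isolate and supply --- is the boundary term $C\int_0^T|\bar u_1|\,\Phi_{x_1}^2\big|_{x_1=0}\,dt$ coming from \eqref{C11}: since \cref{lem1} only gives $|\bar u_1|\le C\delta$ while $|\partial_1\bar u_1|\ge\delta^2 e^{-c_1\delta|\cdot|}$, absorption into $\int_0^T|\partial_1\bar u_1|\,\Phi_{x_1}^2\big|_{x_1=0}\,dt$ with a $\delta$-independent constant cannot be justified from the stated shock-profile estimates alone, and your one-dimensional trace bound $|\phim(t,0)|^2\le 2\|\phim\|_{L^2(\mathbb R^+)}\|\partial_1\phim\|_{L^2(\mathbb R^+)}$ followed by \eqref{C12} closes this cleanly. (Two minor remarks: the invocation of ``Cauchy--Schwarz in $t$'' is superfluous --- pointwise Young's inequality in $t$ suffices; and when rewriting $\|\nabla(\phi,\psi_1)\|^2$ via \eqref{C14} you implicitly drop the order-one $\|\nabla\phi\|^2$ piece, but since it carries the prefactor $C\chi$ it lands in the allowed $C(\delta+\chi)\int_0^T\|\nabla(\phi,\zeta_1)\|^2\,dt$ anyway, so the conclusion stands.)
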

Now we return to the original system $( {\ref{eq27}})$ to estimate the 3-d perturbation $(\phi,\zeta)$.

\begin{Lem}\label{Lem1}
For $T>0$ and  $(\phi, \psi) \in X(0, T)$, under   the  assumptions of Proposition \ref{Cproposition}  with suitably small $\chi+\delta$, we have for $t \in[0, T]$,
\begin{equation}\label{C27}
\begin{aligned}
\sup_{t\in(0,T)}&\|(\phi,\zeta)\|_{L^2(\Omega)}^2+\int_0^T\left(\|\nabla\zeta\|_{L^2(\Omega)}^2   +\|{\zeta^{\prime}}\big|_{x_{1}=0}\|_{L^{2}\left(\mathbb{T}^{2}\right)}^{2}\right)dt\\
&
\leqslant  C \|(\phi_0,\zeta_0)\|_{L^2(\Omega)}^2+ C\delta \int_0^T\|\nabla\phi\|_{L^2(\Omega)}^2dt+C \delta.
\end{aligned}
\end{equation}
\end{Lem}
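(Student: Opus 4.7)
The strategy is a standard relative-entropy energy estimate performed directly on the perturbation system \cref{eq26}, so that the viscous Laplacian produces the dissipation $\|\nabla\zeta\|^2_{L^2}$ while the Navier conditions \cref{eq28} generate the boundary dissipation on $\zeta'=(\zeta_2,\zeta_3)$. Concretely, introduce the pressure potential
\[
\Xi(\rho,\bar\rho):=\int_{\bar\rho}^{\rho}\frac{p(s)-p(\bar\rho)}{s^{2}}\,ds,
\]
which is nonnegative and satisfies $c_{1}\phi^{2}\le \Xi(\rho,\bar\rho)\le c_{2}\phi^{2}$ thanks to \cref{C39}. Multiplying $\cref{eq26}_{1}$ by $(p(\rho)-p(\bar\rho))/\rho$ and $\cref{eq26}_{2}$ by $\zeta$, then adding and integrating over $\Omega$, yields the identity
\[
\frac{d}{dt}\int_{\Omega}\Bigl[\tfrac{1}{2}\rho|\zeta|^{2}+\Xi(\rho,\bar\rho)\Bigr]dx+\mu\|\nabla\zeta\|^{2}+(\mu+\lambda)\|\dv\zeta\|^{2}=\mathcal{B}+\mathcal{S}+\mathcal{N},
\]
where $\mathcal{B}$ collects boundary contributions at $x_{1}=0$, $\mathcal{S}$ gathers the shock-profile source terms (those involving $\bar u_{1}'$, $\bar\rho'$, $\bar u_{t}$), and $\mathcal{N}$ is cubic in $(\phi,\zeta)$.

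The treatment of $\mathcal{B}$ is where the Navier structure is used decisively. Integrating $-\mu\Delta\zeta\cdot\zeta-(\mu+\lambda)\nabla\dv\zeta\cdot\zeta$ by parts, the tangential components satisfy $\partial_{1}\zeta_{j}|_{x_{1}=0}=\zeta_{j}|_{x_{1}=0}/k(x')$ ($j=2,3$), so the resulting boundary integral becomes
\[
\mu\int_{\mathbb{T}^{2}}\frac{|\zeta'|^{2}}{k(x')}\Big|_{x_{1}=0}dx',
\]
which is positive and, using the uniform bound $k\le\bar k$, supplies the desired $\|\zeta'|_{x_{1}=0}\|^{2}_{L^{2}(\mathbb T^{2})}$ dissipation in \cref{C27}. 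For the normal component, $\zeta_{1}|_{x_{1}=0}=-A(t)/\rho$, and since $|A(t)|\lesssim\int_{t}^{\infty}|\bar{\mathbf m}(-s\tau+\alpha)|d\tau$ decays exponentially to zero with $\|A\|_{L^{1}(0,\infty)}\lesssim\delta$ (use \cref{lem1}(ii) and \cref{C7}), all boundary pieces of type $\int_{0}^{t}|A(\tau)|\cdot\|\zeta|_{x_{1}=0}\|_{L^{2}(\mathbb T^{2})}d\tau$ and $\int_{0}^{t}|A(\tau)|\cdot\|(p(\rho)-p(\bar\rho))|_{x_{1}=0}\|_{L^{2}(\mathbb T^{2})}d\tau$ are absorbed by a trace inequality and Cauchy--Schwarz as in \cref{CLem0}, ultimately contributing the $C\delta$ on the right-hand side of \cref{C27}.

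The source term $\mathcal{S}$ contains the cross interactions $\rho\zeta\cdot\nabla\bar{\mathbf u}\cdot\zeta=\rho(\bar u_{1})'\zeta_{1}^{2}$, $\phi(\bar{\mathbf u}_{t}+\bar{\mathbf u}\cdot\nabla\bar{\mathbf u})\cdot\zeta$, and $\nabla\bar\rho\cdot\zeta\cdot(p-\bar p)/\rho$. Each of these is supported in the layer where $|(\bar u_{1})'|\lesssim\delta^{2}e^{-c_{2}\delta|x_{1}-st+\alpha|}$ by \cref{lem1}, so Cauchy--Schwarz with weight $|(\bar u_{1})'|^{1/2}$ and the bound $\|\bar u_{1}\|_{L^{\infty}}\lesssim\delta$ give
\[
|\mathcal{S}|\le C\delta\bigl(\|\phi\|^{2}+\|\zeta\|^{2}\bigr)+C\delta\|\nabla\zeta\|^{2},
\]
where the last term is absorbed by the viscous dissipation. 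The nonlinear remainder $\mathcal{N}$ satisfies $|\mathcal{N}|\le C\chi\|(\phi,\zeta,\nabla\zeta)\|^{2}$ by \cref{eq13} and \cref{C4} and is also absorbable. A Gronwall argument on the resulting inequality yields \cref{C27}; the term $C\delta\int_{0}^{T}\|\nabla\phi\|^{2}dt$ on the right appears when estimating the boundary term $\int_{0}^{t}\int_{\mathbb T^{2}}(p(\rho)-p(\bar\rho))\zeta_{1}|_{x_{1}=0}$ by the trace inequality $\|\phi|_{x_{1}=0}\|^{2}_{L^{2}(\mathbb T^{2})}\le C\|\phi\|\|\partial_{1}\phi\|$ followed by Young's inequality weighted by $\delta$.

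The main difficulty is that this zeroth-order estimate provides no dissipation for $\phi$ itself, and the shock source $\rho(\bar u_{1})'\zeta_{1}^{2}$ has the ``wrong sign'' flagged in the introduction; we only obtain smallness of these bad contributions in the form $\delta\|\nabla\phi\|^{2}$, which must be closed later by combining with the higher-order estimates for $\nabla\phi$ and with the anti-derivative estimates of \cref{CLem5}. Accepting $C\delta\int_{0}^{T}\|\nabla\phi\|^{2}dt$ on the right is therefore essential at this stage, and no attempt is made here to recover $\int_{0}^{T}\|\phi\|^{2}dt$.
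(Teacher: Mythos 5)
Your overall framework — multiply $\cref{eq26}_1$ and $\cref{eq26}_2$ by the relative-entropy weights, use the pressure potential $\Xi$, integrate by parts and exploit the Navier condition to produce $\mu\int_{\mathbb T^2}|\zeta'|^2/k$ on the boundary — is exactly the route the paper takes in \cref{C15}--\cref{C18}. However, the way you close the estimate is not correct, and the paper closes it differently.

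The decisive gap is your final ``Gronwall argument.'' After bounding the shock source terms you arrive at an inequality of the schematic form $\sup_t\|(\phi,\zeta)\|^2+\int_0^T\|\nabla\zeta\|^2\,d\tau\lesssim\|(\phi_0,\zeta_0)\|^2+\delta\int_0^T\|(\phi,\zeta)\|^2\,d\tau+\delta$. Gronwall applied to the un-dissipated $L^2$ mass only gives $\|(\phi,\zeta)(T)\|^2\lesssim e^{C\delta T}\big(\|(\phi_0,\zeta_0)\|^2+\delta\big)$, which blows up as $T\to\infty$ and therefore does not establish \cref{C27}. The paper avoids Gronwall entirely: it observes that the shock source $\sum\tilde I_4^i$ involves only $\phi$ and $\zeta_1$ (not $\zeta_2,\zeta_3$, because $\bar{\mathbf u}=(\bar u_1,0,0)$ and $\nabla\bar\rho,\nabla\bar u_1$ have only $\p_1$-components), so $I_4^3\lesssim\delta\int_0^t\|(\phi,\zeta_1)\|^2\,d\tau$ as in \cref{C29}, and then converts $\|(\phi,\zeta_1)\|^2$ via the Poincar\'e-type inequalities \cref{C12,C10} into $\|\p_1(\Phi,\Psi)\|^2_{L^2(\mathbb R^+)}+\|\nabla(\phi,\zeta_1)\|^2_{L^2(\Omega)}$. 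The anti-derivative term $\delta\int\|\p_1(\Phi,\Psi)\|^2$ is then $\le C\delta$ by \cref{CLem5}, $\delta\int\|\nabla\zeta_1\|^2$ is absorbed by the viscous dissipation, and $\delta\int\|\nabla\phi\|^2$ is exactly the acceptable remainder in \cref{C27}. Your bound $|\mathcal S|\le C\delta(\|\phi\|^2+\|\zeta\|^2)+C\delta\|\nabla\zeta\|^2$ is too coarse for this mechanism to work: the appearance of $\|\zeta_2\|^2,\|\zeta_3\|^2$ is fatal since those components have no anti-derivative control and no $L^2$ dissipation, so with that bound the argument genuinely cannot close.

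Two smaller issues in the same vein: you attribute the $C\delta\int_0^T\|\nabla\phi\|^2\,dt$ term on the right-hand side of \cref{C27} to the boundary flux $\int_0^t\int_{\mathbb T^2}(p(\rho)-p(\bar\rho))\zeta_1|_{x_1=0}$ via a trace inequality, but in the paper this boundary term is $I_4^2$, estimated directly to $C\delta$ as in \cref{C99}; the $\delta\|\nabla\phi\|^2$ remainder actually originates from the interior shock source $I_4^3$ through \cref{C12}. Finally, there is no separate cubic remainder $\mathcal N$ in the exact identity \cref{C17}: the identity you would obtain by summing \cref{C15,C16} already packages all nonlinearities into the three $\tilde I_4^i$, each of which is quadratic in $(\phi,\zeta_1)$ with a profile-derivative coefficient of size $O(\delta)$.
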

 \begin{proof}
Define
\begin{equation}
\Xi(\rho,\bar{\rho}):=\int_{\bar{\rho}}^{\rho}\frac{p(s)-p(\bar{\rho})}{s^{2}}ds=\frac{\gamma}{(\gamma-1)\rho}\left[p(\rho)-p(\bar{\rho})-p'(\bar{\rho})(\rho-\bar{\rho})\right]{=O(1)|\phi|^2.}
\end{equation}
It follows from $(\ref{NS})_1$ that
\begin{equation}\label{C15}
\begin{aligned}
\partial_{t}(\rho\Xi)+\mathrm{div}(\rho\Xi\mathbf{u})=\rho\big(\partial_{t}\Xi+\mathbf{u}\cdot\nabla\Xi\big)
=&-\left[p(\rho)-p(\bar{\rho})-p^{\prime}(\bar{\rho})\phi\right]\mathrm{div}\bar{\mathbf{u}}-\mathrm{div}\left[\left(p(\rho)-p(\bar{\rho})\right)\zeta\right]\\
&+\zeta\cdot\nabla(p(\rho)-p(\bar\rho))-\frac{p'(\bar\rho)}{\bar\rho}\zeta\cdot\nabla\bar\rho   \phi.
\end{aligned}
\end{equation}
Multiplying $\cdot\zeta$ on $(\ref{eq27})_2$ yields that
\begin{equation}\label{C16}
 \begin{aligned}&\partial_t\Big(\frac{1}{2}\rho\:|\zeta|^2\Big)+\mu\:|\nabla\zeta|^2+(\mu+\lambda)\:|\mathrm{div}\zeta|^2\\
 &=\mathrm{div}(\cdots)-\zeta\cdot\nabla(p(\rho)-p(\bar{\rho}))-\rho(\zeta\cdot\nabla\bar{\mathbf{u}})\cdot\zeta-\phi(\partial_t\bar{\mathbf{u}}+\bar{\mathbf{u}}\cdot\nabla\bar{\mathbf{u}})\cdot\zeta,\end{aligned}
\end{equation}
where $( \cdots ) = \frac \mu2\nabla( |\zeta|^2) + ( \mu+ \lambda) \zeta$div$\zeta- \frac 12\rho|\zeta|^2\mathbf{u} .$ Then summing up (\ref{C15}) and (\ref{C16}) yields that
\begin{equation}\label{C17}
\begin{aligned}
&\partial_{t}\left(\rho\Xi+\frac{1}{2}\rho\left|\zeta\right|^{2}\right)+\mu\left|\nabla\zeta\right|^{2}+\left(\mu+\lambda\right)\left|\operatorname{div}\zeta\right|^{2}-\operatorname{div}\left(\frac \mu2\nabla( |\zeta|^2)  - \frac 12\rho|\zeta|^2\mathbf{u}\right)\\
=&(\mu+ \lambda)\operatorname{div}\left(   \zeta   \operatorname{div} \zeta \right)-\mathrm{div}\left[\left(p(\rho)-p(\bar{\rho})\right)\zeta+\rho\Xi\mathbf{u}\right]\\
&+\left(-\left[p(\rho)-p(\bar{\rho})-p'(\bar{\rho})\phi\right]\mathrm{div}\bar{\mathbf{u}}-\frac{p'(\bar{\rho})}{\bar{\rho}}\nabla\bar{\rho}\cdot\zeta\phi
-\rho(\zeta\cdot\nabla\bar{\mathbf{u}})\cdot\zeta-\phi(\partial_{t}\bar{\mathbf{u}}+\bar{\mathbf{u}}\cdot\nabla\bar{\mathbf{u}})\cdot\zeta\right)\\
:=& (\mu+ \lambda)\operatorname{div}\left(   \zeta   \operatorname{div} \zeta \right)-\mathrm{div}\left[\left(p(\rho)-p(\bar{\rho})\right)\zeta+\rho\Xi\mathbf{u}\right]+\sum_{i=1}^3{\tilde I_4^i}.
\end{aligned}
\end{equation}
Integrating (\ref{C17}) over $\Omega\times(0,t)$, one gets that
\begin{equation}\label{C18}
\begin{aligned}
& \int_{\Omega} \left(\rho\Xi+\frac{1}{2}\rho\left|\zeta\right|^{2}\right)  \mathrm{d}x  +\int_{0}^{t} \int_{\Omega}\left(\mu\left|\nabla\zeta\right|^{2} + \left(\mu+\lambda\right)\left|\operatorname{div}\zeta\right|^{2}\right)\mathrm{d}x \mathrm{d}\tau\\
&\qquad+\int_{0}^{t} \int_{\mathbb{T}^2}  \frac{\mu}{k(x')} (\zeta_{2}^{2}+\zeta_{3}^{2})\Big|_{x_1=0}  \mathrm{d}x_2\mathrm{d}x_3\mathrm{d}\tau \\
&=\int_{\Omega} \left( \rho\Xi+\frac{1}{2}\rho\left|\zeta\right|^{2}\right)  \mathrm{d}x \Big|_{t=0}  -\int_{0}^{t} \int_{\mathbb{T}^2}\left(  \mu \zeta_{1} (\zeta_{1}  )_{x_1}+(\mu+\lambda) \zeta_{1} \operatorname{div}\zeta \Big|_{x_1=0}\right) \mathrm{d}x_2\mathrm{d}x_3 \mathrm{d}\tau\\
&\qquad+\int_{0}^{t} \int_{\mathbb{T}^2}  \left(p(\rho)-p(\bar{\rho})\right)\zeta  \Big|_{x_1=0} \mathrm{d}x_2\mathrm{d}x_3 \mathrm{d}\tau+\int_{\Omega} \sum_{i=1}^3\tilde I_4^i  \mathrm{d}x
:=\int_{\Omega} \left( \rho\Xi+\frac{1}{2}\rho\left|\zeta\right|^{2}\right)  \mathrm{d}x \Big|_{t=0}  +\sum_{i=1}^3 I _4^i,
 \end{aligned}
 \end{equation}
where we have used the boundary condition (\ref{eq2}) and (\ref{eq28}). Next we estimate the right-hand side of the above equality. With the help of Lemma \ref{CLem0}, one gets that
\begin{equation}\label{C99}
\begin{aligned}
I_{4}^1 &\leq C \int_{0}^{t}   \left\|\zeta_{1}\Big|_{x_1=0} \right\|_{L_2(\mathbb{T}^2)}   \left\|\left\| \operatorname{div}\zeta   \right\|_{L^{\infty}(\mathbb R^+)}      \right\|_{L_2(\mathbb{T}^2)} \mathrm{d}\tau\\
&\leq C    \sup_{t>0} \left\|\left\| \operatorname{div}\zeta   \right\|_{L^{\infty}(\mathbb R^+)}      \right\|_{L^2(\mathbb{T}^2)}  \int_{0}^{t}   \left\|\bar{\bf u}(-s\tau+\alpha) \right\|_{L^2(\mathbb{T}^2)}  \mathrm{d}\tau\\
&\leq C    \sup_{t>0} \left\|\left\| \operatorname{div}\zeta   \right\|_{L^{2}(\mathbb R^+)} ^{\frac{1}{2}}   \left\| \operatorname{div}\zeta_{x_1}   \right\|_{L^{2}(\mathbb R^+)} ^{\frac{1}{2}}  \right\|_{L^2(\mathbb{T}^2)} \int_{0}^{t}  \bar{\bf u}(-s\tau+\alpha)  \mathrm{d}\tau\\
&\leq C   \delta \sup_{t>0}  \left\| \operatorname{div}\zeta   \right\|_{L^2(\Omega)} ^{\frac{1}{2}}   \left\| \operatorname{div}\zeta_{x_1}   \right\|_{L^2(\Omega)} ^{\frac{1}{2}} \leq C   \delta.
\end{aligned}
\end{equation}
Similarly, we can get that $I_{4}^2\leq C   \delta $. With the help of the Cauchy's inequality and Lemma {\ref{lem1}}, one gets that
\begin{equation}\label{C29}
\begin{aligned}
I_{4}^3=\int_{0}^{t} \|\sum_{i=1}^3\tilde I_{4}^i\|_{L^{1}(\Omega)}\mathrm{d}\tau& \leq C \delta  \int_{0}^{t}  \left\|(\phi,\zeta_{1})\right\|_{L^{2}(\Omega)}^{2}  \mathrm{d}\tau.
\end{aligned}
\end{equation}
Thus, making use of (\ref{C18}) and (\ref{C29}),  one can obtain  (\ref{C27}).
\end{proof}
Compared to the whole space problem in \cite{LWWarma}, the derivative estimates are obtained by the tangential direction and normal direction respectively due to the boundary effect in this paper. We are ready to estimate the first-order tangential derivative of $(\phi, \zeta)$, based on the symmetric hyperbolic-parabolic structure of the perturbation system.  We rewrite $( {\ref{eq26}})$ for simplicity {as follows}.
\begin{align}\label{eq12}
\left\{\begin{aligned}
&\partial_t \phi+{\bf u} \cdot \nabla \phi+\rho \operatorname{div} \zeta=f, \\
&\rho\left(\partial_t \zeta+{\bf u} \cdot \nabla \zeta\right)+p^{\prime}(\rho) \nabla \phi-\mu \Delta \zeta-(\mu+\lambda) \nabla \operatorname{div} \zeta=g,
\end{aligned}\right.
\end{align}
where
\begin{align}\label{eqj101}
\left\{\begin{array}{l}
f=-\zeta_1 \partial_{1} \bar{\rho}-\phi \partial_{1} \bar{u}_1, \\
g=\left(g_1, 0,0\right)^t, g_1=-\left(p^{\prime}(\rho)-\frac{\rho}{\bar{\rho}} p^{\prime}(\bar{\rho})\right) \partial_{1} \bar{\rho}-\rho \zeta_1 \partial_{1} \bar{u}_1.
\end{array}\right.
\end{align}

\begin{Lem}\label{Lem2}
     Let $T>0$ be a constant and $(\phi, \zeta) \in X(0, T)$ be the solution of \cref{C4} satisfying a priori assumption \cref{C4} with suitably small $\chi+\delta$, it holds that for $t \in[0, T]$,
\begin{align}\label{lem2-1}
\begin{aligned}
 \left\|\partial_{x^{\prime}}(\phi, \zeta)(t)\right\|^2+\int_0^t\left\|\nabla \partial_{x^{\prime}} \zeta\right\|^2 d \tau+\int_0^t\left\|\left.\partial_{x^{\prime}} \zeta^{\prime}\right|_{x_1=0}\right\|_{L^2\left(\mathbb{T}^2\right)}^2 d \tau
\leq  C\left(E^2(0)+M^2(t)+\delta\right),
\end{aligned}
\end{align}
and
\begin{align}\label{lem2-2}
    \begin{aligned}
 \left\|\partial_t(\phi, \zeta)(t)\right\|^2+\int_0^t\left\|\nabla \partial_\tau \zeta\right\|^2 d \tau+\int_0^t\left\|\left.\partial_\tau \zeta^{\prime}\right|_{x_1=0}\right\|_{L^2\left(\mathbb{T}^2\right)}^2 d \tau
\leq  C\left(E^2(0)+M^2(t)+\delta\right) .
\end{aligned}
\end{align}
\end{Lem}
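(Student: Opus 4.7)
The plan is to treat both estimates by the same energy argument applied to the symmetric hyperbolic-parabolic system \cref{eq12}, first with $\partial_{x'}$ and then with $\partial_t$. For \cref{lem2-1}, I would apply $\partial_{x'}$ (meaning $\partial_{x_2}$ or $\partial_{x_3}$) to both equations of \cref{eq12}, multiply the density equation by $\frac{p'(\rho)}{\rho}\partial_{x'}\phi$ and the momentum equation by $\partial_{x'}\zeta$, add, and integrate over $\Omega\times(0,t)$. The symmetric structure ensures the principal pressure/velocity cross-terms $p'(\rho)\nabla\partial_{x'}\phi\cdot\partial_{x'}\zeta$ and $\rho\operatorname{div}\partial_{x'}\zeta\cdot\frac{p'(\rho)}{\rho}\partial_{x'}\phi$ cancel after integration by parts, modulo commutators with variable coefficients that are $O(\|\nabla\rho\|_{L^\infty}+|\bar\rho'|)$, hence absorbable into $M^2(t)+\delta$. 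The viscous terms yield the dissipation $\mu\|\nabla\partial_{x'}\zeta\|^2+(\mu+\lambda)\|\operatorname{div}\partial_{x'}\zeta\|^2$ after integration by parts in $x_1$.

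The boundary contributions at $x_1=0$ must be examined one by one. Since $\partial_{x'}$ is tangential, we may commute it with the Navier condition: $\partial_{x'}\zeta_j = \partial_{x'}(k(x')\partial_1\zeta_j) = k(x')\partial_1(\partial_{x'}\zeta_j)+(\partial_{x'}k)\partial_1\zeta_j$ for $j=2,3$, so the viscosity boundary integral becomes $\int_{\mathbb T^2}\frac{\mu}{k(x')}|\partial_{x'}\zeta'|^2|_{x_1=0}$ plus a commutator whose absolute value is bounded, via the trace inequality, by a small multiple of $\|\nabla\partial_{x'}\zeta\|^2$ plus $C\|\nabla\zeta\|_{H^1}^2$, absorbable through \cref{C27}. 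The pressure boundary term generates $\partial_{x'}\phi\cdot\partial_{x'}\zeta_1|_{x_1=0}$, which we control by using $\zeta_1|_{x_1=0}=-A(t)/\rho$ together with the trace estimate and $|A(t)|\leq C\delta$, again yielding at most $C\delta$ after time integration, exactly as in \cref{CLem0}. The remaining source terms $\partial_{x'}f,\partial_{x'}g$ from \cref{eqj101} carry factors $\bar\rho_{x_1},\bar u_{1,x_1}$ which are $O(\delta^2 e^{-c\delta|x_1|})$ by \cref{lem1}, and combined with a priori bounds yield contributions absorbable into $M^2(t)+\delta$; the nonlinear terms are bounded by $\|(\phi,\zeta)\|_{L^\infty}$ times lower-order derivatives, producing an $M^2(t)$ contribution via Lemma \ref{Lem-GN}.

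The proof of \cref{lem2-2} follows the same template with $\partial_t$ in place of $\partial_{x'}$. The symmetric cancellation, viscous dissipation, and commutator structure are identical. The only genuinely new feature is the boundary: for $j=2,3$ the Navier condition commutes with $\partial_t$ since $k(x')$ is time-independent, yielding the dissipation $\int_{\mathbb T^2}\frac{\mu}{k(x')}|\partial_t\zeta'|^2|_{x_1=0}$; for $j=1$ we have $\partial_t\zeta_1|_{x_1=0}=-\frac{A'(t)}{\rho}+\frac{A(t)\partial_t\rho}{\rho^2}$, with $A'(t)=\bar{\mathbf m}(-st+\alpha)$ exponentially decaying and $|A(t)|\leq C\delta$ by \cref{eqj118} and \cref{lem1}, so the corresponding pressure boundary integral is $O(\delta)$ after time integration.

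The main obstacle, in my view, is the boundary analysis rather than the interior symmetrization. In particular, the commutator $(\partial_{x'}k)\partial_1\zeta_j$ generated at the Navier boundary must be controlled purely through tangential-derivative trace estimates, since we do not yet have independent control of $\partial_1\zeta|_{x_1=0}$; the escape is to use the Navier condition itself to trade one normal derivative for a boundary value, then invoke the multiplicative trace inequality in $\mathbb T^2$ together with the already-dissipated $\|\nabla\partial_{x'}\zeta\|^2$. A secondary delicate point is that $\partial_t\zeta_1|_{x_1=0}$ does not have a sign, so it cannot be moved into a dissipation; it must instead be handled as an inhomogeneous boundary forcing whose size is governed entirely by $A'(t)$, which is why the shock-profile decay in \cref{lem1} is essential.
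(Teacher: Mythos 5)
Your proposal follows essentially the same route as the paper's own proof: apply $\partial_{x'}$ (resp.\ $\partial_t$) to \cref{eq12}, multiply by $\tfrac{p'(\rho)}{\rho}\partial_{x'}\phi$ and $\partial_{x'}\zeta$, integrate, collect boundary terms, and use the Navier condition $\partial_1\zeta_j|_{x_1=0}=\zeta_j/k$ to eliminate the uncontrolled normal-derivative trace before applying Cauchy--Schwarz, while the $\zeta_1$-boundary contributions are tied to the exponential decay of $A(t)$ exactly as in \cref{CLem0}. Your identification of the boundary commutator $(\partial_{x'}k)\partial_1\zeta_j$ as the crux, and the remedy of substituting the Navier relation before estimating, is precisely what the paper's $I_5$ and $I_9$ compute, and the $\partial_t$ case differing only in the inhomogeneous boundary forcing $\partial_t\zeta_1|_{x_1=0}=-A'(t)/\rho + A(t)\partial_t\rho/\rho^2$ also matches the paper's (briefly stated) treatment of \cref{lem2-2}.
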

\begin{proof}
First, we derive the first-order special tangential derivative. Applying $\partial_{x^{\prime}}$ to \cref{eq12} yields
\begin{align}\label{eq42}
\left\{\begin{aligned}
&\partial_t \partial_{x^{\prime}} \phi+{\bf u} \cdot \nabla \partial_{x^{\prime}} \phi+\rho \operatorname{div} \partial_{x^{\prime}} \zeta=-\partial_{x^{\prime}} {\bf u} \cdot \nabla \phi-\partial_{x^{\prime}} \rho \operatorname{div} \zeta+\partial_{x^{\prime}} f, \\
&\rho\left(\partial_t \partial_{x^{\prime}} \zeta+{\bf u} \cdot \nabla \partial_{x^{\prime}} \zeta\right)+\nabla\left(p^{\prime}(\rho) \partial_{x^{\prime}} \phi\right)-\mu \Delta \partial_{x^{\prime}} \zeta-(\mu+\lambda) \nabla \operatorname{div} \partial_{x^{\prime}} \zeta \\
&\qquad\qquad\qquad\quad=p^{\prime \prime}(\rho) \nabla \bar{\rho} \partial_{x^{\prime}} \phi-\partial_{x^{\prime}} \rho \partial_t \zeta-\partial_{x^{\prime}}(\rho {\bf u}) \cdot \nabla \zeta+\partial_{x^{\prime}} g.
\end{aligned}\right.
\end{align}

Multiplying \cref{eq42}$_1$ by $\frac{p^{\prime}(\rho)}{\rho} \partial_{x^{\prime}} \phi$, \cref{eq42}$_2$ by $\partial_{x^{\prime}} \zeta$, adding them up and then integrating it with respect to $t, x$ over $(0, t) \times \Omega$, we have
\begin{align*}
\int & \left.\left(\frac{p^{\prime}(\rho)}{\rho} \frac{\left(\partial_{x^{\prime}} \phi\right)^2}{2}+\rho \frac{\left|\partial_{x^{\prime}} \zeta\right|^2}{2}\right) d x\right|_{\tau=0} ^{\tau=t}+\int_0^t \int\left(\mu\left|\nabla \partial_{x^{\prime}} \zeta\right|^2+(\mu+\lambda)\left(\operatorname{div} \partial_{x^{\prime}} \zeta\right)^2\right) d x d \tau \\
& +\left.\mu \int_0^t \int_{\mathbb{T}^2} \frac{1}{k\left(x^{\prime}\right)}\left(\left(\partial_{x^{\prime}} \zeta_2\right)^2+\left(\partial_{x^{\prime}} \zeta_3\right)^2\right)\right|_{x_1=0} d x_2 d x_3 d \tau \\
=- & \left.\mu \sum_{i=2}^3 \int_0^t \int_{\mathbb{T}^2} \partial_{x^{\prime}}\left(\frac{1}{k\left(x^{\prime}\right)}\right) \zeta_i \partial_{x^{\prime}} \zeta_i\right|_{x_1=0} d x_2 d x_3 d \tau \\
& +\int_0^t \int\left((3-\gamma) \frac{p^{\prime}(\rho)}{\rho} \operatorname{div} {\bf u} \frac{\left(\partial_{x^{\prime}} \phi\right)^2}{2}+p^{\prime \prime}(\rho) \partial_{1} \bar{\rho} \partial_{x^{\prime}} \zeta_1 \partial_{x^{\prime}} \phi\right) d x d \tau \\
& -\int_0^t \int\left(\left(\partial_{x^{\prime}} {\bf u} \cdot \nabla \phi+\partial_{x^{\prime}} \rho \operatorname{div} \zeta\right) \frac{p^{\prime}(\rho)}{\rho} \partial_{x^{\prime}} \phi+\left(\partial_{x^{\prime}} \rho \partial_\tau \zeta+\partial_{x^{\prime}}(\rho {\bf u}) \cdot \nabla \zeta\right) \cdot \partial_{x^{\prime}} \zeta\right) d x d \tau \\
& +\int_0^t \int\left(\frac{p^{\prime}(\rho)}{\rho} \partial_{x^{\prime}} f \partial_{x^{\prime}} \phi+\partial_{x^{\prime}} g \cdot \partial_{x^{\prime}} \zeta\right) d x d \tau \\
 &+{\int_0^t \int_{\mathbb{T}^2}  (\mu+\lambda) \operatorname{div} \left(\partial_{x^{\prime}}\zeta\right)\partial_{x^{\prime}}\zeta_1 -\mu \partial_{1}\partial_{x^{\prime}}\zeta_1\partial_{x^{\prime}}\zeta_1 - \partial_{x^{\prime}}\zeta_1 {p^{\prime}(\rho)} \partial_{x^{\prime}} \phi\Big|_{x_1=0} d x_2 d x_3d \tau:=\sum_{i=5}^{9} I_i} .
\end{align*}
For $I_5$, we have
$$
\begin{aligned}
I_5 \leq \frac{\mu}{2} & \left.\int_0^t \int_{\mathbb{T}^2} \frac{1}{k\left(x^{\prime}\right)}\left(\left(\partial_{x^{\prime}} \zeta_2\right)^2+\left(\partial_{x^{\prime}} \zeta_3\right)^2\right)\right|_{x_1=0} d x_2 d x_3 d \tau  +\left.C \int_0^t \int_{\mathbb{T}^2}\left|\zeta^{\prime}\right|^2\right|_{x_1=0} d x_2 d x_3 d \tau .
\end{aligned}
$$
Using Cauchy's inequality, Sobolev's inequality, and the assumption \cref{C4}, we have
\begin{align}\label{eqj71}
\begin{aligned}
I_6& \leq C \int_0^t\left(\|\operatorname{div} \zeta\|_{L^{\infty}}\left\|\partial_{x^{\prime}} \phi\right\|^2+\left\|\p_{1} \bar{u}_1\right\|_{L^{\infty}}\left\|\partial_{x^{\prime}} \phi\right\|^2+\left\|\p_{1} \bar{\rho}\right\|_{L^{\infty}}\left\|\partial_{x^{\prime}} \zeta_1\right\|\left\|\partial_{x^{\prime}} \phi\right\|\right) d \tau\\
& \leq C \int_0^t\left(\|\operatorname{div} \zeta\|_{H^2}\left\|\partial_{x^{\prime}} \phi\right\|^2+\left\|\p_{1} \bar{u}_1\right\|_{L^{\infty}}\left(\left\|\partial_{x^{\prime}} \phi\right\|^2+\left\|\partial_{x^{\prime}} \zeta_1\right\|^2\right)\right) d \tau \\
& \leq C \int_0^t\left(\|\operatorname{div} \zeta\|_{H^1}+\left\|\nabla^2 \operatorname{div} \zeta\right\|\right)\left\|\partial_{x^{\prime}} \phi\right\|^2 d \tau+C \delta\int_0^t\left(\left\|\partial_{x^{\prime}} \phi\right\|^2+\left\|\partial_{x^{\prime}} \zeta_1\right\|^2\right) d \tau \\
& \leq C(\chi+\delta) \int_0^t\left(\left\|\partial_{x^{\prime}} \phi\right\|^2+\left\|\partial_{x^{\prime}} \zeta_1\right\|^2+\left\|\nabla^2 \operatorname{div} \zeta\right\|^2\right) d \tau.
\end{aligned}
\end{align}
From {Cauchy's} inequality, Sobolev's inequality and the assumption \cref{C4}, one has
$$
\begin{aligned}
I_7 \leq & C \int_0^t\left(\left\|\partial_{x^{\prime}} \zeta\right\|_{L^3}\|\nabla \phi\|_{L^6}+\left\|\partial_{x^{\prime}} \phi\right\|_{L^3}\|\nabla \zeta\|_{L^6}\right)\left\|\partial_{x^{\prime}} \phi\right\| d \tau \\
& \left.+C \int_0^t\left\|\partial_{x^{\prime}} \phi\right\|_{L^3}\left\|\partial_\tau \zeta\right\|_{L^6}+\left\|\partial_{x^{\prime}}(\phi, \zeta)\right\|_{L^3}\|\nabla \zeta\|_{L^6}\right)\left\|\partial_{x^{\prime}} \zeta\right\| d \tau \\
\leq & C \int_0^t\left(\left\|\partial_{x^{\prime}} \zeta\right\|^{\frac{1}{2}}\left\|\partial_{x^{\prime}} \zeta\right\|_{H^1}^{\frac{1}{2}}\|\nabla \phi\|_{H^1}+\left\|\partial_{x^{\prime}} \phi\right\|^{\frac{1}{2}}\left\|\partial_{x^{\prime}} \phi\right\|_{H^1}^{\frac{1}{2}}\|\nabla \zeta\|_{H^1}\right)\left\|\partial_{x^{\prime}} \phi\right\| d \tau \\
& +C \int_0^t\left(\left\|\partial_{x^{\prime}} \phi\right\|^{\frac{1}{2}}\left\|\partial_{x^{\prime}} \phi\right\|_{H^1}^{\frac{1}{2}}\left\|\partial_\tau \zeta\right\|_{H^1}+\left\|\partial_{x^{\prime}}(\phi, \zeta)\right\|^{\frac{1}{2}}\left\|\partial_{x^{\prime}}(\phi, \zeta)\right\|_{H^1}^{\frac{1}{2}}\|\nabla \zeta\|_{H^1}\right)\left\|\partial_{x^{\prime}} \zeta\right\| d \tau \\
\leq & C \chi \int_0^t\left(\|\nabla \phi\|_{H^1}^2+\|\nabla \zeta\|_{H^1}^2+\left\|\partial_\tau \zeta\right\|_{H^1}^2\right)d\tau .
\end{aligned}
$$
By Cauchy's inequality and \cref{eq12}, it holds that
$$
I_8 \leq C \delta \int_0^t\left(\left\|\partial_{x^{\prime}} \phi\right\|^2+\left\|\partial_{x^{\prime}} \zeta\right\|^2\right) d \tau .
$$
Next, we estimate the boundary term $I_9$. With the aid of  \eqref{eq28}, one gets that
\begin{equation}
\partial_{x^{\prime}}\zeta_1  \Big|_{x_1=0}= {-A(t)}[({\rho(t,0,x')})^{-1}]_{x'}=O(1)[A(t)+\phi_{x'}].
\end{equation}
 Then, similar to \eqref{C99}, it follows that
 \begin{equation}\label{C100}
 \begin{aligned}
I_9& \leqslant C\int_{0}^{t}\left\|
A(\tau) 
\left\|\operatorname{\nabla}(\zeta,
\phi)\right\|_{L^{\infty}(\mathbb{R}^{+})}\right\|_{L_{2}(\mathbb{T}^{2})}\mathrm{d}\tau  \\
 &\leqslant C \delta \sup_{t>0}
 \left\|\left\|\operatorname{\nabla}(\zeta,
 \phi)\right\|_{L^2(\mathbb{R}^+)}^{\frac12}\|\operatorname{\nabla}(\zeta_{x_1},
 \phi_{x_1})\|_{L^2(\mathbb{R}^+)}^{\frac12}\right\|_{L^2(\mathbb{T}^2)} \leqslant C\delta.
\end{aligned}\end{equation}
Combining the estimates in $I_k$, $k=5,6,7,8,9$, one can get \cref{lem2-1}. The proof of \cref{lem2-2} is similar, noting that
$$
\left\|\partial_t(\phi, \zeta)(0)\right\|^2 \leq C\left\|\nabla^2 \zeta_0\right\|^2+C\left\|\nabla\left(\phi_0, \zeta_0\right)\right\|^2+C( \chi+\delta)\left\|\left(\phi_0, \zeta_0\right)\right\|^2+C \delta.
$$

Then the proof of \cref{Lem2} is completed.
\end{proof}
Next we derive the $H^1$-parabolic estimates for $\zeta$.
\begin{Lem}\label{Lem3}
 Let $T>0$ be a constant and $(\phi, \zeta) \in X(0, T)$ be the solution of \cref{eq12} satisfying a priori assumption \cref{C4} with suitably small $\chi+\delta$, it holds that for $t \in[0, T]$,
\begin{align}\label{eqj70}
\begin{aligned}
& \|\nabla \zeta(t)\|^2+\left\|\left.\zeta^{\prime}(t)\right|_{x_1=0}\right\|_{L^2\left(\mathbb{T}^2\right)}^2+\int_0^t\left\|\partial_\tau \zeta\right\|^2 d \tau \\
\leq & C\|\phi(t)\|^2+ C\eta \int_0^t\|\nabla \phi\|^2 d \tau+C_\eta \int_0^t\|\nabla \zeta\|^2 d \tau+C\left(E^2(0)+M^2(t)\right)+C_\eta \delta,
\end{aligned}
\end{align}
where $\eta$ is a small positive constant to be determined.
\end{Lem}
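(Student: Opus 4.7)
The natural strategy is to test the momentum equation \cref{eq12}$_2$ against $\partial_t\zeta$ and integrate over $\Omega\times(0,t)$. The term $\rho\,\partial_t\zeta\cdot\partial_t\zeta$ contributes $\int\rho|\partial_t\zeta|^2$ to the left-hand side (using the lower bound \cref{C39}), giving the $\int_0^t\|\partial_\tau\zeta\|^2\,d\tau$ term. The diffusion terms are integrated by parts in space, producing $\frac{1}{2}\frac{d}{dt}\bigl(\mu\|\nabla\zeta\|^2+(\mu+\lambda)\|\operatorname{div}\zeta\|^2\bigr)$, which after time integration yields the $\|\nabla\zeta(t)\|^2$ on the left-hand side. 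The convective term $\rho\,\u\cdot\nabla\zeta\cdot\partial_t\zeta$ is absorbed into $\eta\|\partial_t\zeta\|^2+C_\eta\|\nabla\zeta\|^2$, while lower-order transport/shock-profile contributions from $g$ in \cref{eqj101} produce $O(\delta)$ or $M^2(t)$-type bounds via Sobolev embedding exactly as in the proof of \cref{Lem2}.

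The boundary terms from the Laplacian integration by parts are where the Navier structure plays its role. On $x_1=0$ with outer normal $-e_1$, the contribution $-\mu\int_{\mathbb T^2}\partial_1\zeta_i\,\partial_t\zeta_i\,dx'$ for $i=2,3$ becomes, using $\partial_1\zeta_i=\zeta_i/k(x')$, exactly $-\frac{\mu}{2}\frac{d}{dt}\int_{\mathbb T^2}\zeta_i^2/k(x')\,dx'$; moving to the left gives $\|\zeta'|_{x_1=0}\|^2_{L^2(\mathbb T^2)}$ as stated. The $i=1$ piece involves $\zeta_1|_{x_1=0}=-A(t)/\rho(t,0,x')$ which is $O(\delta)$; using $|A(t)|\le C\delta$ together with the Gagliardo-Nirenberg bound \cref{eq13} applied to $\partial_1\zeta_1$ in $\mathbb R^+$ (as in \cref{C99} and \cref{C100}), this whole contribution is absorbed into $C_\eta\delta$. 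The analogous boundary term $(\mu+\lambda)\int_{\mathbb T^2}\operatorname{div}\zeta\,\partial_t\zeta_1|_{x_1=0}\,dx'$ coming from the $\nabla\operatorname{div}\zeta$ integration is treated identically, since $\partial_t\zeta_1|_{x_1=0}=O(|A'(t)|+|A(t)||\partial_t\rho|/\rho^2)$ is again of size $\delta$.

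The key algebraic manoeuvre is on the pressure term $\int p'(\rho)\nabla\phi\cdot\partial_t\zeta$. I would integrate by parts in space,
\begin{align*}
\int_\Omega p'(\rho)\nabla\phi\cdot\partial_t\zeta\,dx = -\int_\Omega\phi\operatorname{div}\bigl(p'(\rho)\partial_t\zeta\bigr)\,dx+\int_{\mathbb T^2}\phi\,p'(\rho)(-\partial_t\zeta_1)\bigr|_{x_1=0}\,dx',
\end{align*}
and then swap the time derivative to $\phi$ via
\begin{align*}
-\int_\Omega p'(\rho)\phi\operatorname{div}\partial_t\zeta\,dx=-\frac{d}{dt}\!\int_\Omega p'(\rho)\phi\operatorname{div}\zeta\,dx+\int_\Omega\partial_t\!\bigl(p'(\rho)\phi\bigr)\operatorname{div}\zeta\,dx.
\end{align*}
The boundary term above is again $O(\delta)$ since $\partial_t\zeta_1|_{x_1=0}\sim A'(t)$. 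After time integration on $(0,t)$, the total-derivative piece contributes $-\int_\Omega p'(\rho(t))\phi(t)\operatorname{div}\zeta(t)\,dx$, which Cauchy's inequality bounds by $\eta\|\nabla\zeta(t)\|^2+C_\eta\|\phi(t)\|^2$ — this produces precisely the $C\|\phi(t)\|^2$ term on the right-hand side of \cref{eqj70}, with $\eta\|\nabla\zeta(t)\|^2$ absorbed into the left. The remaining $\int_0^t\!\int\partial_t(p'(\rho)\phi)\operatorname{div}\zeta$ piece is handled by substituting $\partial_t\phi=-\u\cdot\nabla\phi-\rho\operatorname{div}\zeta+f$ from \cref{eq12}$_1$, yielding $\eta\int_0^t\|\nabla\phi\|^2+C_\eta\int_0^t\|\nabla\zeta\|^2+C\delta+CM^2(t)$-type controls, and similarly for the term with $\nabla p'(\rho)\cdot\partial_t\zeta\cdot\phi$.

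The main obstacle I anticipate is bookkeeping at the boundary: the term $(\mu+\lambda)\int\operatorname{div}\zeta\,\partial_t\zeta_1|_{x_1=0}$ coming from $\nabla\operatorname{div}\zeta$ does not carry a good sign from Navier structure, so one must exploit $\zeta_1|_{x_1=0}=-A(t)/\rho\sim\delta$ at the right moment (likely by another time integration by parts transferring $\partial_t$ off $\zeta_1|_{x_1=0}$ onto $\operatorname{div}\zeta|_{x_1=0}$, trading it for $A'(t)$ which is again $O(\delta)$ and controllable by the trace estimate method of \cref{CLem0}). Once this boundary manipulation is in place, collecting all estimates, choosing $\chi+\delta$ small to absorb the $\eta\|\partial_t\zeta\|^2$ pieces into the left-hand side, and recalling that the initial-data norms are controlled by $E^2(0)$, yields \cref{eqj70}.
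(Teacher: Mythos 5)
Your proposal matches the paper's argument: test \cref{eq12}$_2$ against $\partial_t\zeta$, integrate the viscous terms to produce $\|\nabla\zeta(t)\|^2$ together with the Navier boundary term $\|\zeta'|_{x_1=0}(t)\|^2_{L^2(\mathbb T^2)}$, treat the $\zeta_1$-boundary contributions via $\zeta_1|_{x_1=0}=-A(t)/\rho=O(\delta)$, and generate the $\|\phi(t)\|^2$ term by integrating the pressure term by parts so that $\int p'(\rho)\phi\operatorname{div}\zeta\,dx$ appears at the endpoint $\tau=t$ and is split by Cauchy's inequality. The only difference is that you perform the spatial integration by parts before the temporal one on the pressure term, whereas the paper does time first and then space on the endpoint term; these are algebraically equivalent and lead to the same collection of bounds.
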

\begin{proof}
Multiplying \cref{eq12}$_2$ by $\partial_t \zeta$ and integrating the resulting equality with respect to $t, x$ over $[0, t] \times \Omega$, one has 
 \begin{align}
 \begin{aligned}
& \left.\int\left(\frac{\mu}{2}|\nabla \zeta|^2+\frac{\mu+\lambda}{2}(\operatorname{div} \zeta)^2\right) d x\right|_{\tau=0} ^{\tau=t}+\left.\left.\frac{\mu}{2} \int_{\mathbb{T}^2} \frac{\left|\zeta^{\prime}\right|^2}{k\left(x^{\prime}\right)}\right|_{x_1=0} d x_2 d x_3\right|_{\tau=0} ^{\tau=t}+\int_0^t \int \rho\left|\partial_\tau \zeta\right|^2 d x d \tau\nonumber \\
= & -\int_0^t \int p^{\prime}(\rho) \nabla \phi \cdot \partial_\tau \zeta d x d \tau-\int_0^t \int \rho u \cdot \nabla \zeta \cdot \partial_\tau \zeta d x d \tau+\int_0^t \int g_1 \cdot \partial_\tau \zeta_1 d x d \tau\\
&-  \left. \int_0^t \int_{\mathbb{T}^2} \left((\mu+\lambda)\partial_\tau \zeta_1 \operatorname{div} \zeta+\mu \ \zeta_{1}\partial_{{1}}\zeta_{1}\bigg|_{x_{1}=0}\right)dx_{2}dx_{3}\right|_{\tau=0}^{\tau=t}+\mu\int_{0}^{t}\int_{\mathbb{T}^{2}}\zeta_{1}\cdot\partial_{1}\partial_{\tau}\zeta_{1}dx_{2}dx_{3}d\tau\\
=&:I_{10}+I_{11}+I_{12}+I_{13},
\end{aligned}
\end{align}
where we have used
$$
\begin{aligned}
&\quad-\mu \int_0^t \int \operatorname{div}\left(\nabla \zeta \cdot \partial_\tau \zeta\right) d x d \tau =\left.\mu \int_0^t \int_{\mathbb{T}^2} \p_{1} \zeta \cdot \partial_\tau \zeta\right|_{x_1=0} d x_2 d x_3 d \tau \\
& =\left.\mu \int_0^t \int_{\mathbb{T}^2} \frac{1}{k\left(x^{\prime}\right)}\left(\zeta_2 \partial_\tau \zeta_2+\zeta_3 \partial_\tau \zeta_3\right)\right|_{x_1=0} d x_2 d x_3 d \tau\\
\quad&+\left.\left.\mu  \int_{\mathbb{T}^2}   \zeta_1\partial_{1} \zeta_1 \right|_{x_1=0} d x_2 d x_3 \right|_{\tau=0} ^{\tau=t} -\mu \int_0^t \int_{\mathbb{T}^2}  \zeta \cdot \p_{1}\partial_\tau \zeta  \Big|_{x_1=0} d x_2 d x_3 d \tau\\
&=\left.\left.\frac{\mu}{2} \int_{\mathbb{T}^2} \frac{\left|\zeta^{\prime}\right|^2}{k\left(x^{\prime}\right)}\right|_{x_1=0} d x_2 d x_3\right|_{\tau=0} ^{\tau=t}+\left.\left.\mu  \int_{\mathbb{T}^2}   \zeta_1\partial_{1} \zeta_1 \right|_{x_1=0} d x_2 d x_3 \right|_{\tau=0} ^{\tau=t} -\mu \int_0^t \int_{\mathbb{T}^2}  \zeta_1 \cdot \p_{1}\partial_\tau \zeta_1 \Big|_{x_1=0}  d x_2 d x_3 d \tau,
\end{aligned}
$$
and
$$
\begin{aligned}
& -(\mu+\lambda) \int_0^t \int \operatorname{div}\left(\partial_\tau \zeta \operatorname{div} \zeta\right) d x d \tau
=  \left.(\mu+\lambda) \int_0^t \int_{\mathbb{T}^2} \partial_\tau \zeta_1 \operatorname{div} \zeta\right|_{x_1=0} d x_2 d x_3 d \tau  .
\end{aligned}
$$
For $I_{10}$, the integration by parts implies that
$$
\begin{aligned}
& -\int_0^t \int p^{\prime}(\rho) \nabla \phi \cdot \partial_\tau \zeta d x d \tau
=  -\left.\int p^{\prime}(\rho) \nabla \phi \cdot \zeta d x\right|_{\tau=0} ^{\tau=t}+\int_0^t \int \partial_\tau\left(p^{\prime}(\rho) \nabla \phi\right) \cdot \zeta d x d \tau \\
= & \left.\int p^{\prime}(\rho) \phi \operatorname{div} \zeta d x\right|_{\tau=0} ^{\tau=t}+\left.\int \phi \zeta \cdot \nabla p^{\prime}(\rho) d x\right|_{\tau=0} ^{\tau=t}+\int_0^t \int \partial_\tau\left(p^{\prime}(\rho) \nabla \phi\right) \cdot \zeta d x d \tau .
\end{aligned}
$$
Now we handle the last term above. By direct calculation, one has
$$
\begin{aligned}
& \partial_t\left(p^{\prime}(\rho) \nabla \phi\right)+{\bf u} \cdot \nabla\left(p^{\prime}(\rho) \nabla \phi\right)+p^{\prime}(\rho) \rho \nabla \operatorname{div} \zeta \\
= & p^{\prime \prime}(\rho)\left(\partial_t \rho+{\bf u} \cdot \nabla \rho\right) \nabla \phi+p^{\prime}(\rho)\left(\partial_t \nabla \phi+{\bf u} \cdot \nabla(\nabla \phi)+\rho \nabla \operatorname{div} \zeta\right) \\
= & -p^{\prime \prime}(\rho) \rho \operatorname{div}{\bf u} \nabla \phi+p^{\prime}(\rho)(-\nabla {\bf u} \cdot \nabla \phi-\nabla \rho \operatorname{div} \zeta+\nabla f) .
\end{aligned}
$$
Consequently, it follows that,
\begin{align}\label{I99}
& \int_0^t \int \partial_\tau\left(p^{\prime}(\rho) \nabla \phi\right) \cdot \zeta d x d \tau \nonumber\\
= & -\int_0^t {\bf u} \cdot \nabla\left(p^{\prime}(\rho) \nabla \phi\right) \cdot \zeta d x d \tau-\int_0^t \int p^{\prime}(\rho) \rho \zeta \cdot \nabla \operatorname{div} \zeta d x d \tau \nonumber\\
& -\int_0^t \int p^{\prime \prime}(\rho) \rho \operatorname{div} {\bf u} \zeta \cdot \nabla \phi d x d \tau+\int_0^t \int p^{\prime}(\rho)(-\nabla {\bf u} \cdot \nabla \phi-\nabla \rho \operatorname{div} \zeta+\nabla f) \cdot \zeta d x d \tau \\
= & \int_0^t \int p^{\prime}(\rho)\left({\bf u} \cdot \nabla \zeta \cdot \nabla \phi+\rho(\operatorname{div} \zeta)^2+\zeta \cdot \nabla \phi \operatorname{div} \zeta-\zeta \cdot \nabla \zeta \cdot \nabla \phi\right) d x d \tau\nonumber \\
& +\int_0^t \int p^{\prime}(\rho) \zeta \cdot \nabla f d x d \tau+\int_0^t \int p^{\prime}(\rho) \p_{1} \bar{u}_1\left(\zeta \cdot \nabla \phi-\zeta_1 \p_{1} \phi\right) d x d \tau \nonumber\\
& +\int_0^t \int p^{\prime \prime}(\rho) \rho\left(\zeta_1 \p_{1} \bar{\rho} \operatorname{div} \zeta-\p_{1} \bar{u}_1 \zeta \cdot \nabla \phi\right) d x d \tau:=\sum_{i=1}^{4} I_{10}^i .\nonumber
\end{align}
For $I_{10}^1$, we have
$$
I_{10}^1 \leq \eta\int_0^t\|\nabla \phi\|^2 d \tau+C_\eta \int_0^t\|\nabla \zeta\|^2 d \tau+C \chi \int_0^t\|\nabla(\phi, \zeta)\|^2 d \tau .
$$
Noting that $\abs{\p_{1}(\rhob,\ub)}\leq C\delta$, we have
\begin{align*}
I_{10}^2 \leq &\int_0^t\int \abs{p'(\rho)\dv\zeta}\abs{f}+\abs{p''(\rho)\nabla\rho\cdot\zeta} \abs{f}dxd\tau \\
\leq&C \int_0^t\big(\abs{\nabla\zeta}+\abs{\p_{1}\rhob\zeta_1}+\abs{\nabla\phi}\abs{\zeta}\big)\big(\abs{\zeta_1 \partial_{1} \bar{\rho}}+\abs{\phi \partial_{1} \bar{u}_1}\big)dxd\tau\\
\leq & \delta \int_{0}^{t}\|(\nabla \phi,\nabla\zeta)\|^{2} d \tau+C \delta \int_{0}^{t}\left\|\left(\phi, \zeta_{1}\right)\right\|^{2} d \tau
\leq C\delta\int_0^t\|\partial_{1}(\Phi,\Psi)\|^2_{L^{2}(\mathbb{R}^+)}+\|(\nabla\zeta_{1},\nabla\phi)\|^2_{L^{2}(\Omega)}d\tau.
\end{align*}
Similarly, one has
$$
I_{10}^3+I_{10}^4 \leq C\delta\int_0^t\left(\|\partial_{1}(\Phi,\Psi)\|^2_{L^{2}(\mathbb{R}^+)}+\|(\nabla\zeta_{1},\nabla\phi)\|^2_{L^{2}(\Omega)}\right)d\tau.
$$
Combining the estimates of $I_{10}^1-I_{10}^4$, we can get
\begin{align}\label{I9}
    \begin{aligned}
& \int_{0}^{t} \int \partial_{t}\left(p^{\prime}(\rho) \nabla \phi\right) \cdot \zeta d x d \tau
\leq  \int_{0}^{t}\|\nabla \phi\|^{2} d \tau+C_{\eta} \int_{0}^{t}\|\nabla \zeta\|^{2} d \tau+C (\delta+\chi)M(t).
\end{aligned}
\end{align}
Then by \cref{I99,I9} and Cauchy's inequality, we arrive at
$$
\begin{aligned}
& -\int_{0}^{t} \int p^{\prime}(\rho) \nabla \phi \cdot \partial_{\tau} \zeta d x d \tau \\
\leq & \frac{\mu+\lambda}{4}\|\operatorname{div} \zeta(t)\|^{2}+C\|\phi(t)\|^{2}+C\left(E^{2}(0)+M^{2}(t)\right) \\
& +C(\chi+\delta)\left(\|\phi(t)\|_{H^{1}}^{2}+\|\zeta(t)\|^{2}\right)+\eta \int_{0}^{t}\|\nabla \phi\|^{2} d \tau+C_{\eta} \int_{0}^{t}\|\nabla \zeta\|^{2} d \tau+C_{\eta} \delta.
\end{aligned}
$$
For $I_{11}$ and $I_{12}$, by Cauchy's inequality, it holds
$$
-\int_{0}^{t} \int \rho u \cdot \nabla \zeta \cdot \partial_{\tau} \zeta d x d \tau \leq \frac{1}{4} \int_{0}^{t}\left\|\sqrt{\rho} \partial_{\tau} \zeta\right\|^{2}d\tau+C \int_{0}^{t}\|\nabla \zeta\|^{2} d \tau,
$$
and
$$
\int_{0}^{t} \int g_{1} \cdot \partial_{\tau} \zeta_{1} d x d \tau \leq \frac{1}{4} \int_{0}^{t}\left\|\sqrt{\rho} \partial_{\tau} \zeta\right\|^{2}d\tau+C\delta\int_0^t\|\partial_{1}(\Phi,\Psi)\|^2_{L^{2}(\mathbb{R}^+)}+\|(\nabla\zeta_{1},\nabla\phi)\|^2_{L^{2}(\Omega)}d\tau+C
{\delta} .
$$
Finally, one has
$$
\begin{aligned}
\left.\left.\frac{\mu}{2} \int_{\mathbb{T}^{2}} \frac{\left|\zeta^{\prime}\right|^{2}}{k\left(x^{\prime}\right)}\right|_{x_{1}=0} d x_{2} d x_{3}\right|_{\tau=0}
\leq & C \int_{\mathbb{T}^{2}}\left\|\zeta_{0}^{\prime}\right\|_{L^{\infty}(\mathbb R^+)}^{2} d x_{2} d x_{3} \leq C \int_{\mathbb{T}^{2}}\left\|\zeta_{0}^{\prime}\right\|_{L^{2}(\mathbb R^+)}\left\|\p_{1} \zeta_{0}^{\prime}\right\|_{L^{2}(\mathbb R^+)} d x_{2} d x_{3} \\
\leq & C\left(\left\|\zeta_{0}^{\prime}\right\|^{2}+\left\|\p_{1} \zeta_{0}^{\prime}\right\|^{2}\right).
\end{aligned}
$$
Similar like (\ref{C99}), we have
  $I_{13}\leq C \delta$. \color{black}By estimates of $ I_{10},I_{11},I_{12},I_{13}$, we can obtain \eqref{eqj70}. The proof of \cref{Lem3} is completed.
\end{proof}

Then, we derive the dissipative estimates for {the} normal derivative of $\phi$, i.e., $\left\|\partial_{{1}} \phi\right\|^{2}$, which follows from a hyperbolic-parabolic structure of the perturbation system \eqref{eq26}.

\begin{Lem}\label{Lem4}
 Let $T>0$ be a constant and $(\phi, \zeta) \in X(0, T)$ be the solution of \eqref{eq26} satisfying a priori assumption \eqref{C4} with suitably small $\chi+\delta$, it holds that for $t \in[0, T]$,
\begin{align}\label{eqj0}
\begin{aligned}
&\left\|\partial_{{1}} \phi(t)\right\|^{2}+\int_{0}^{t}\left(\left\|\partial_{{1}} \phi\right\|^{2}+\left\|\partial_{{1}}^{2} \zeta_{1}\right\|^{2}\right) d \tau\\
\leq & C \int_{0}^{t}\left(\left\|\partial_{t} \zeta_{1}\right\|^{2}+\left\|\nabla \zeta_{1}\right\|^{2}+\left\|\nabla \partial_{x^{\prime}} \zeta\right\|^{2}\right) d \tau
+C\left(E^{2}(0)+M^{2}(t)+\delta\right).
\end{aligned}
\end{align}
\end{Lem}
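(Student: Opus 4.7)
\textbf{Proof proposal for \Cref{Lem4}.} The plan is to exploit the hyperbolic-parabolic structure of the first component of the momentum equation in \eqref{eq12}, which can be rearranged as
\[
p'(\rho)\,\partial_1\phi \;=\; (2\mu+\lambda)\,\partial_1^2\zeta_1 \;-\; \rho(\partial_t\zeta_1+{\bf u}\cdot\nabla\zeta_1)\;+\;\mu\,\partial_{x'}^2\zeta_1\;+\;(\mu+\lambda)\,\partial_1\partial_{x'}\zeta'\;+\;g_1,
\]
thereby exposing the coupling between $\partial_1\phi$ and $\partial_1^2\zeta_1$. First I will multiply the first component of $\eqref{eq12}_2$ by $\partial_1\phi$ and integrate over $\Omega\times(0,t)$, producing the coercive term $\int_0^t\!\int p'(\rho)|\partial_1\phi|^2\,dx d\tau$. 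The time-derivative term $-\int_0^t\!\int\rho\partial_\tau\zeta_1\,\partial_1\phi$ will be treated by integration by parts in $\tau$; the resulting volume integral $\int_0^t\!\int\rho\zeta_1\,\partial_\tau\partial_1\phi$ is then converted into purely spatial quantities by substituting $\partial_\tau\phi = f-\rho\,\operatorname{div}\zeta-{\bf u}\cdot\nabla\phi$ from $\eqref{eq12}_1$, followed by integration by parts in $x_1$ to eliminate second spatial derivatives of $\phi$. What remains is controllable: $\|\nabla\zeta\|^2$-type terms, an absorbable $\eta\|\partial_1\phi\|^2$, $\delta$-small pieces from $\partial_1\bar\rho$, $\partial_1\bar u_1$, and a nonlinear remainder bounded by $M^2(t)$ through the a priori assumption \eqref{C4}. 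The endpoint-in-time term $-\int\rho\zeta_1\partial_1\phi|_{\tau=t}$ supplies the $\|\partial_1\phi(t)\|^2$ on the left-hand side after Cauchy's inequality (balanced by $\|\zeta(t)\|^2\lesssim E^2(0)+M^2(t)$).

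Next, for the viscous terms, I split
\[
\mu\Delta\zeta_1+(\mu+\lambda)\partial_1\operatorname{div}\zeta=(2\mu+\lambda)\partial_1^2\zeta_1+\mu\,\partial_{x'}^2\zeta_1+(\mu+\lambda)\,\partial_1\partial_{x'}\zeta'.
\]
The mixed and tangential pieces paired with $\partial_1\phi$ are controlled by Young's inequality as $\eta\|\partial_1\phi\|^2+C_\eta\|\nabla\partial_{x'}\zeta\|^2$. The problematic normal coupling $(2\mu+\lambda)\int\partial_1^2\zeta_1\,\partial_1\phi$ is bounded by Young as $\eta\|\partial_1\phi\|^2+C_\eta\|\partial_1^2\zeta_1\|^2$. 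All boundary contributions at $x_1=0$ produced by the various integrations by parts will be handled via the Navier boundary conditions \eqref{eq28}, which give $\zeta_1|_{x_1=0}=-A(t)/\rho=O(\delta)$, exactly in the spirit of \Cref{CLem0}, \Cref{CLem1} and \Cref{CLem2}, contributing at most $C\delta$. Nonlinear convective pieces such as $\int\rho{\bf u}\cdot\nabla\zeta_1\,\partial_1\phi$ are controlled by a combination of $\|\nabla\zeta_1\|^2$ (after Cauchy) and $\chi$-small contributions.

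To recover the estimate on $\int_0^t\|\partial_1^2\zeta_1\|^2\,d\tau$ appearing on the left of \eqref{eqj0}, I read the first-component momentum equation as an algebraic identity for $\partial_1^2\zeta_1$ and square it in $L^2$, which yields
\[
\|\partial_1^2\zeta_1\|^2\leq C\|\partial_1\phi\|^2+C\bigl(\|\partial_t\zeta_1\|^2+\|\nabla\zeta_1\|^2+\|\nabla\partial_{x'}\zeta\|^2\bigr)+C\|g_1\|^2,
\]
with $\|g_1\|^2\leq C\delta(\|\phi\|^2+\|\zeta_1\|^2)$ falling into the $M^2(t)+\delta$ bucket. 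I then close the estimate by forming a weighted linear combination: multiply the $\partial_1\phi$-multiplier inequality by a sufficiently large constant and add it to this algebraic inequality, choosing the Young parameter $\eta$ so that the $C_\eta\|\partial_1^2\zeta_1\|^2$ appearing on the right of the first inequality is dominated by the $\|\partial_1^2\zeta_1\|^2$ on the left of their sum. This yields simultaneous control of $\sup_t\|\partial_1\phi\|^2$ and $\int_0^t(\|\partial_1\phi\|^2+\|\partial_1^2\zeta_1\|^2)d\tau$ in terms of the quantities on the right-hand side of \eqref{eqj0}.

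The main obstacle is precisely the coupled character of $\|\partial_1\phi\|$ and $\|\partial_1^2\zeta_1\|$ — they both appear on the left-hand side and are linked through the momentum equation — compounded by the need to avoid creating $\partial_1^2\phi$ when integrating the viscous coupling by parts in $x_1$. This is resolved by the algebraic recovery above combined with the smallness of the Young parameter, and by using the continuity equation to eliminate $\partial_\tau\partial_1\phi$ rather than attempting a direct spatial integration by parts on the cross term.
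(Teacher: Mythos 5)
There are two genuine gaps, both stemming from the same omission: you multiply only the first component of the momentum equation by $\partial_1\phi$, rather than also applying $\partial_1$ to the continuity equation and forming the paired cross estimate, as the paper does.

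\textbf{The Young step on the normal-viscous coupling cannot close.} You propose to bound the cross term $(2\mu+\lambda)\int\partial_1^2\zeta_1\,\partial_1\phi$ by $\eta\|\partial_1\phi\|^2 + C_\eta\|\partial_1^2\zeta_1\|^2$ and then recover $\|\partial_1^2\zeta_1\|^2$ algebraically from the momentum equation, hoping a weighted combination closes. It does not. After absorbing $\eta\|\partial_1\phi\|^2$ into the coercive $\int p'(\rho)|\partial_1\phi|^2$, you are left with $(p'_{\min}-\eta)\int\|\partial_1\phi\|^2 \le \frac{(2\mu+\lambda)^2}{4\eta}\int\|\partial_1^2\zeta_1\|^2 + (\text{good})$. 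On the other hand the momentum equation gives $\int\|\partial_1^2\zeta_1\|^2 \le \frac{(1+\epsilon)(p'_{\max})^2}{(2\mu+\lambda)^2}\int\|\partial_1\phi\|^2 + (\text{good})$, with the factor $\frac{p'_{\max}}{2\mu+\lambda}$ forced by the same coefficient appearing in the PDE. Substituting, closure requires $4\eta(p'_{\min}-\eta) > (1+\epsilon)(p'_{\max})^2$, but the left side is at most $(p'_{\min})^2$ over all $\eta$, which never exceeds $(p'_{\max})^2$. So the constants block the weighted combination no matter how you pick $\eta$ and the large multiplying constant. The paper avoids this entirely: differentiating the continuity equation in $x_1$ yields the term $\rho\,\partial_1^2\zeta_1$, and multiplying it by $\frac{1}{\rho}\partial_1\phi$ produces $+\partial_1^2\zeta_1\,\partial_1\phi$, which \emph{exactly cancels} the $-\partial_1^2\zeta_1\,\partial_1\phi$ from the momentum equation after multiplying by $\frac{1}{2\mu+\lambda}\partial_1\phi$. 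That cancellation is the essence of the hyperbolic-parabolic cross estimate in \eqref{eqj3}; trying to replace it by Cauchy--Schwarz destroys the estimate.

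\textbf{The pointwise-in-time term $\|\partial_1\phi(t)\|^2$ is not produced by your calculation.} You attribute it to the endpoint term $-\int\rho\,\zeta_1\,\partial_1\phi\,|_{\tau=t}$ obtained from integrating by parts in $\tau$. But that is a sign-indefinite cross term: Cauchy's inequality turns it into $+\tfrac12\int\rho|\partial_1\phi|^2(t) + \tfrac12\int\rho|\zeta_1|^2(t)$ on the right-hand side, i.e.\ a \emph{bad} contribution in $\|\partial_1\phi(t)\|^2$ that you have nothing on the left to absorb it into. In the paper, the coercive $\sup_t$ term appears on the left because the $\partial_1$-differentiated continuity equation, tested against $\frac{1}{\rho}\partial_1\phi$, produces $\partial_t\bigl(\frac{(\partial_1\phi)^2}{2\rho}\bigr)$; without that equation in the pairing, the coercive $\|\partial_1\phi(t)\|^2$ has no source. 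The fix for both gaps is the same: use the system \eqref{eqj1} with the two complementary multipliers $\frac{1}{\rho}\partial_1\phi$ and $\frac{1}{2\mu+\lambda}\partial_1\phi$ rather than testing only the momentum equation.
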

\begin{proof}
Applying $\partial_{1}$ to \eqref{eq26}$_{1}$ and rewriting \eqref{eq26}$_2$, one has
\begin{align}\label{eqj1}
\left\{\begin{array}{l}
\partial_{t} \partial_{{1}} \phi+{\bf u} \cdot \nabla \partial_{{1}} \phi+\rho \partial_{{1}}^{2} \zeta_{1}+\rho \partial_{{1}} \nabla^{\prime} \cdot \zeta^{\prime}+\partial_{{1}}{\bf u} \cdot \nabla \phi+\partial_{{1}} \rho \operatorname{div} \zeta=\partial_{{1}} f, \\
\rho\left(\partial_{t} \zeta_{1}+{\bf u} \cdot \nabla \zeta_{1}\right)+p^{\prime}(\rho) \partial_{{1}} \phi-(2 \mu+\lambda) \partial_{{1}}^{2} \zeta_{1}-\mu \Delta^{\prime} \zeta_{1}-(\mu+\lambda) \partial_{{1}} \nabla^{\prime} \cdot \zeta^{\prime}=g_{1},
\end{array}\right.
\end{align}
where $\Delta^{\prime}=\partial_{x_{2}}^{2}+\partial_{x_{3}}^{2}, \nabla^{\prime}=\left(\partial_{x_{2}}, \partial_{x_{3}}\right)$ and $\nabla^{\prime} \cdot \zeta^{\prime}=\partial_{x_{2}} \zeta_{2}+\partial_{x_{3}} \zeta_{3}$. Multiplying \eqref{eqj1}$_1$ by $\frac{1}{\rho} \partial_{{1}} \phi$ and \eqref{eqj1}$_{2}$ by $\frac{1}{2 \mu+\lambda} \partial_{{1}} \phi$ and adding the resulted equations together, then integrating the resulted equality with respect to $t, x$ over $[0, t] \times \Omega$, it yields
\begin{align}\label{eqj3}
\begin{aligned}
& \left.\int \frac{1}{\rho} \frac{\left(\partial_{{1}} \phi\right)^{2}}{2} d x\right|_{\tau=0} ^{\tau=t}+\int_{0}^{t} \int \frac{p^{\prime}(\rho)}{2 \mu+\lambda}\left(\partial_{{1}} \phi\right)^{2} d x d \tau \\
= & \int_{0}^{t} \int\left[\mu\left(\Delta^{\prime} \zeta_{1}-\partial_{{1}} \nabla^{\prime} \cdot \zeta^{\prime}\right)
-\rho\left(\partial_{\tau} \zeta_{1}+{\bf u} \cdot \nabla \zeta_{1}\right)\right] \frac{\partial_{{1}} \phi}{2 \mu+\lambda} d x d \tau \\
& +\int_{0}^{t} \int\left(\frac{\operatorname{div} {\bf u}}{\rho}\left(\partial_{{1}} \phi\right)^{2}-\frac{1}{\rho} \partial_{{1}} {\bf u} \cdot \nabla \phi \partial_{{1}} \phi-\frac{1}{\rho} \partial_{{1}} \rho \operatorname{div} \zeta \partial_{{1}} \phi\right) d x d \tau \\
& +\int_{0}^{t} \int\left(\frac{1}{\rho} \partial_{{1}} f \partial_{{1}} \phi+\frac{1}{2 \mu+\lambda} g_{1} \partial_{{1}} \phi\right) d x d \tau:=\sum_{i=14}^{16} I_{i}.
\end{aligned}
\end{align}
Here the boundary term vanishes under the condition \eqref{eq28}. By Cauchy's inequality, one has
\begin{align}\label{eqj7}
I_{14} \leq \frac{1}{16(2 \mu+\lambda)} \int_{0}^{t}\left\|\sqrt{p^{\prime}(\rho)} \partial_{{1}} \phi\right\|^{2} d \tau+C \int_{0}^{t}\left(
\left\|\nabla \partial_{x_2} \zeta\right\|^{2}+\left\|\nabla \partial_{x_3} \zeta\right\|^{2}+\left\|\partial_{\tau} \zeta_{1}\right\|^{2}+\left\|\nabla \zeta_{1}\right\|^{2}\right) d \tau.
\end{align}
From Cauchy's inequality and the assumption \eqref{C4}, one obtains that
\begin{align}\label{eqj4}
\begin{aligned}
I_{15}  &=\int_{0}^{t} \int \frac{1}{\rho}\left(-\partial_{{1}} \zeta \cdot \nabla \phi \partial_{{1}} \phi-\partial_{{1}} \bar{\rho} \operatorname{div} \zeta \partial_{{1}} \phi\right) d x d \tau \\
& \leq C \int_{0}^{t}\left(\left\|\partial_{{1}} \zeta\right\|_{L^{6}}\|\nabla \phi\|_{L^{3}}\left\|\partial_{{1}} \phi\right\|+\delta\|\nabla \zeta\|\left\|\partial_{{1}} \phi\right\|\right) d \tau  \leq C(\chi+\delta) \int_{0}^{t}\left(\|\nabla \zeta\|_{H^{1}}^{2}+\left\|\partial_{{1}} \phi\right\|^{2}\right)d\tau.
\end{aligned}
\end{align}
By Cauchy's inequality and Lemma \ref{lem1}, one has
\begin{align}\label{eqj5}
\begin{aligned}
I_{16} \leq & \frac{1}{16(2 \mu+\lambda)} \int_{0}^{t}\left\|\sqrt{p^{\prime}(\rho)} \partial_{{1}} \phi\right\|^{2} d \tau+\int_{0}^{t}\int |\p_1f|^2+|g_1|^2dxd\tau\\
\le& \frac{1}{16(2 \mu+\lambda)} \int_{0}^{t}\left\|\sqrt{p^{\prime}(\rho)} \partial_{{1}} \phi\right\|^{2} d \tau+C\delta\int_0^t\left(\|(\phi,\zeta_1)\|^2+\|\p_1(\phi,\zeta_1)\|^2\right)d\tau.
\end{aligned}
\end{align}
By Lemma 
\ref{CLem3} and \ref{Lem1}, substituting \eqref{eqj7}-\eqref{eqj5} into \eqref{eqj3} yields
\begin{align}\label{eqj8}
\begin{aligned}
 \left\|\partial_{{1}} \phi(t)\right\|^{2}+\int_{0}^{t}\left\|\partial_{{1}} \phi\right\|^{2} d \tau 
\leq&  C\left\|\partial_{{1}} \phi_{0}\right\|^{2}+C \int_{0}^{t}\left(\left\|\partial_{\tau} \zeta_{1}\right\|^{2}+\left\|\nabla \zeta_{1}\right\|^{2}+\left\|\nabla \partial_{x^{\prime}} \zeta\right\|^{2}\right) d \tau \\
&+C(\chi+\delta) \int_{0}^{t}\|\nabla \zeta\|_{H^{1}}^{2} d \tau+C \delta\int_{0}^{t}\left\|
\left(\phi, \zeta_{1}\right)\right\|^{2} d \tau\\
\le &C\left\|\partial_{{1}} \phi_{0}\right\|^{2}+C \int_{0}^{t}\left(\left\|\partial_{\tau} \zeta_{1}\right\|^{2}+\left\|\nabla \zeta_{1}\right\|^{2}+\left\|\nabla \partial_{x^{\prime}} \zeta\right\|^{2}\right) d \tau \\
& +C(\chi+\delta) \int_{0}^{t}\left(\|\nabla \zeta\|_{H^{1}}^{2} +\|\nabla\phi\|^2\right)d \tau+C \delta\int_{0}^{t}\left\|
\left(\phi,\p_1\Phi,\p_{x_2}\Psi\right)\right\|^{2} d \tau.
\end{aligned}
\end{align}
It follows from $\eqref{eqj1}_{2}$ that
\begin{align}\label{eqj9}
\begin{aligned}
\int_{0}^{t}\left\|\partial_{{1}}^{2} \zeta_{1}\right\|^{2} d \tau \leq & C \int_{0}^{t}\left(\left\|\partial_{{1}} \phi\right\|^{2}+\left\|\partial_{\tau} \zeta_{1}\right\|^{2}+\left\|\nabla \zeta_{1}\right\|^{2}+\left\|\nabla \partial_{x^{\prime}} \zeta\right\|^{2}\right) d \tau+C\delta.
\end{aligned}
\end{align}
Hence, multiplying \eqref{eqj8} by a large constant $C$ and combining \eqref{eqj9}, one can get \eqref{eqj0}. The proof of Lemma \ref{Lem7} is completed.
\end{proof}

Next we are ready to estimate the tangential derivatives of $\phi$ by the momentum equations.

\begin{Lem}\label{Lem5}
 Let $T>0$ be a constant and $(\phi, \zeta) \in X(0, T)$ be the solution of \eqref{eq26} satisfying a priori assumption \eqref{C4} with suitably small $\chi+\delta$, it holds that for $t \in[0, T]$,
 \begin{align}\label{eqj30}
\begin{aligned}
\int_{0}^{t}\left(\left\|\partial_{x^{\prime}} \phi\right\|^{2}+\left\|\partial_{{1}}^{2} \zeta^{\prime}\right\|^{2}\right) d \tau
&\leq  C \eta \int_{0}^{t}\left\|\partial_{{1}} \phi\right\|^{2} d \tau+C_{\eta} \int_{0}^{t}\left\|\nabla \partial_{x^{\prime}} \zeta\right\|^{2} d \tau  \\
&\qquad+C \int_{0}^{t}\left(\left\|\partial_{\tau} \zeta^{\prime}\right\|^{2}+\left\|\nabla \zeta^{\prime}\right\|^{2}+\left\|\partial_{{1}} \partial_{x^{\prime}} \phi\right\|^{2}+\left\|\left.\zeta^{\prime}\right|_{x_{1}=0}\right\|_{L^{2}\left(\mathbb{T}^{2}\right)}^{2}\right) d \tau.
\end{aligned}
\end{align}
\end{Lem}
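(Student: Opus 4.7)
The plan is to mimic the structure of \cref{Lem4}, now working with the $j$-th ($j=2,3$) component of the momentum equation in \cref{eq12}, namely
\begin{equation*}
\rho(\partial_t\zeta_j+{\bf u}\cdot\nabla\zeta_j)+p'(\rho)\partial_{x_j}\phi-\mu\Delta\zeta_j-(\mu+\lambda)\partial_{x_j}\operatorname{div}\zeta=0,
\end{equation*}
which has no source term since $g=(g_1,0,0)^t$ in \cref{eqj101}. I would multiply this by $\partial_{x_j}\phi$, sum over $j=2,3$, and integrate over $\Omega\times[0,t]$. The coupling term $\int p'(\rho)|\nabla'\phi|^2$ then supplies the good term $c_0\int_0^t\|\partial_{x'}\phi\|^2\,d\tau$ on the LHS by the lower bound on $p'(\rho)$ coming from \cref{C39}.

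For the bulk remainder, Cauchy's inequality combined with the uniform $L^\infty$ bound on ${\bf u}$ handles $\int\rho\partial_t\zeta'\cdot\nabla'\phi$ and $\int\rho({\bf u}\cdot\nabla)\zeta'\cdot\nabla'\phi$, producing the $\|\partial_\tau\zeta'\|^2$ and $\|\nabla\zeta'\|^2$ contributions. For the viscous integrals $\mu\int\Delta\zeta'\cdot\nabla'\phi$ and $(\mu+\lambda)\int\nabla'\operatorname{div}\zeta\cdot\nabla'\phi$, I would split each into a normal and a tangential piece. The normal pieces $\mu\int\partial_1^2\zeta_j\partial_{x_j}\phi$ and $(\mu+\lambda)\int\partial_1\partial_{x_j}\zeta_1\partial_{x_j}\phi$ are integrated by parts in $x_1$, moving one $\partial_1$ onto $\partial_{x_j}\phi$; Cauchy's inequality then yields the $C\|\partial_1\partial_{x'}\phi\|^2$ and $C\|\nabla\partial_{x'}\zeta\|^2$ terms on the RHS of \cref{eqj30}, together with boundary integrals at $x_1=0$. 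The tangential pieces $\mu\int\Delta'\zeta_j\partial_{x_j}\phi$ and $(\mu+\lambda)\int\partial_{x_j}\nabla'\cdot\zeta'\partial_{x_j}\phi$ are estimated directly by Cauchy, giving $\eta\|\nabla'\phi\|^2+C_\eta\|\nabla\partial_{x'}\zeta\|^2$ after absorbing the $\eta$ piece into the LHS.

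The delicate step is the boundary contributions. For $-\mu\int_{\mathbb T^2}\partial_1\zeta_j|_{x_1=0}\partial_{x_j}\phi|_{x_1=0}\,dx'$, I would apply the Navier condition $\partial_1\zeta_j|_{x_1=0}=\zeta_j|_{x_1=0}/k(x')$ from \cref{eq28}, then integrate by parts in $x'$ on the torus to shift $\partial_{x_j}$ onto $\zeta_j/k$ and produce a factor $\phi|_{x_1=0}$. Applying the trace inequality $\|v|_{x_1=0}\|^2_{L^2(\mathbb T^2)}\le C\|v\|\|\partial_1 v\|$ to $\phi$, followed by Young's inequality, generates precisely the $C\eta\int_0^t\|\partial_1\phi\|^2d\tau$ contribution and the boundary norm $C\int_0^t\|\zeta'|_{x_1=0}\|^2_{L^2(\mathbb T^2)}d\tau$ appearing on the RHS of \cref{eqj30}. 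The parallel term $-(\mu+\lambda)\int_{\mathbb T^2}\partial_{x_j}\zeta_1|_{x_1=0}\partial_{x_j}\phi|_{x_1=0}\,dx'$ is treated via $\zeta_1|_{x_1=0}=-A(t)/\rho|_{x_1=0}$ and the smallness $|A(t)|\le C\delta$ from \cref{eqj118}, so it is absorbed into $C\delta$-small terms exactly as in \cref{CLem0}.

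Collecting these estimates and absorbing every $\eta$-multiple of $\|\nabla'\phi\|^2$ into the LHS yields the bound on $\int_0^t\|\partial_{x'}\phi\|^2d\tau$. To close, I would recover $\|\partial_1^2\zeta'\|^2$ algebraically by rewriting the tangential momentum equation as
\begin{equation*}
\mu\partial_1^2\zeta_j=\rho\partial_t\zeta_j+\rho{\bf u}\cdot\nabla\zeta_j+p'(\rho)\partial_{x_j}\phi-\mu\Delta'\zeta_j-(\mu+\lambda)\partial_{x_j}\operatorname{div}\zeta,
\end{equation*}
squaring and integrating to obtain $\int_0^t\|\partial_1^2\zeta'\|^2d\tau\le C\int_0^t(\|\partial_\tau\zeta'\|^2+\|\nabla\zeta'\|^2+\|\partial_{x'}\phi\|^2+\|\nabla\partial_{x'}\zeta\|^2)d\tau$, and then substituting the bound on $\|\partial_{x'}\phi\|^2$ to arrive at \cref{eqj30}. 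The main obstacle is the balanced treatment of the boundary integrals: the order of integration by parts, Navier substitution, and trace/Young splitting must be chosen so that the surviving boundary contributions land in exactly the admissible combination of $C\eta\|\partial_1\phi\|^2$, $\|\zeta'|_{x_1=0}\|^2$, $\|\partial_1\partial_{x'}\phi\|^2$, and $C\delta$-small terms on the RHS.
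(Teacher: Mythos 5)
The overall skeleton of your proposal matches the paper: rewrite the $j$-th ($j=2,3$) momentum equation of \cref{eq12}, multiply by $\partial_{x_j}\phi$, integrate, isolate the positive term $\int p'(\rho)(\partial_{x_j}\phi)^2$ on the left, integrate the $\mu\partial_1^2\zeta_j$ term by parts, invoke the Navier condition $\partial_1\zeta_j|_{x_1=0}=\zeta_j/k(x')|_{x_1=0}$ to reduce the boundary term to $-\mu\int_{\mathbb T^2}\frac{\zeta_j}{k}\,\partial_{x_j}\phi\big|_{x_1=0}\,dx'$, and finally recover $\|\partial_1^2\zeta'\|^2$ algebraically from the equation. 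Up to and including arriving at $-\mu\int_{\mathbb T^2}\frac{\zeta_j}{k}\,\partial_{x_j}\phi\big|_{x_1=0}\,dx'$, you and the paper travel essentially the same road (via slightly different orders of integration by parts).

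However, the step where you handle that remaining boundary integral has a genuine gap. You propose one more integration by parts on $\mathbb T^2$ to move $\partial_{x_j}$ onto $\zeta_j/k$, producing $\mu\int_{\mathbb T^2}\partial_{x_j}\!\bigl(\frac{\zeta_j}{k(x')}\bigr)\,\phi\big|_{x_1=0}\,dx'$, and then estimate this by applying a trace inequality to $\phi$ and claiming the result is controlled by $\eta\int_0^t\|\partial_1\phi\|^2\,d\tau + C\int_0^t\|\zeta'|_{x_1=0}\|^2_{L^2(\mathbb T^2)}\,d\tau$. This fails for two reasons. First, $\partial_{x_j}(\zeta_j/k)=\frac{1}{k}\partial_{x_j}\zeta_j+\zeta_j\,\partial_{x_j}(1/k)$, so the dominant piece involves $\partial_{x'}\zeta'\big|_{x_1=0}$, not $\zeta'\big|_{x_1=0}$; that tangential derivative of the trace is not among the admissible quantities on the right of \cref{eqj30}, and it cannot simply be replaced by $\|\zeta'|_{x_1=0}\|_{L^2(\mathbb T^2)}$. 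Second, the trace bound $\|\phi|_{x_1=0}\|_{L^2(\mathbb T^2)}^2\le C\|\phi\|\,\|\partial_1\phi\|$ introduces $\|\phi\|$, which is also not an admissible term in \cref{eqj30} (it can be converted via \cref{C12}, but that pulls in $\|\partial_1\Phi\|$, again absent from the statement). The paper avoids this entirely: it does \emph{not} integrate by parts a second time in $x'$, but instead estimates $-\mu\int_{\mathbb T^2}\frac{\zeta_j}{k}\,\partial_{x_j}\phi\big|_{x_1=0}\,dx'$ directly, bounding $\partial_{x_j}\phi$ by the one-dimensional Sobolev inequality $\|\partial_{x_j}\phi\|_{L^\infty(\mathbb R^+)}\le C\|\partial_{x_j}\phi\|_{L^2(\mathbb R^+)}^{1/2}\|\partial_1\partial_{x_j}\phi\|_{L^2(\mathbb R^+)}^{1/2}$, then Cauchy--Schwarz in $x'$ and Young's inequality. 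That yields $\frac{1}{16}\|\sqrt{p'(\rho)}\partial_{x_j}\phi\|^2$ (absorbed), $C\|\zeta_j|_{x_1=0}\|^2_{L^2(\mathbb T^2)}$, and $C\|\partial_1\partial_{x_j}\phi\|^2$ — exactly the terms allowed in \cref{eqj30}. Your extra integration by parts on the boundary is the wrong move; the fix is to keep $\partial_{x_j}\phi$ there and bound it in $L^\infty(\mathbb R^+)$ instead. A secondary remark: the term $(\mu+\lambda)\int\partial_{x_j}\operatorname{div}\zeta\,\partial_{x_j}\phi$ already carries a tangential derivative and thus is bounded by Cauchy directly via $\|\nabla\partial_{x'}\zeta\|$, so your proposed splitting into normal and tangential pieces and the attendant integration by parts for its normal part is unnecessary and only introduces further boundary terms to untangle.
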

\begin{proof}
Rewrite the second perturbed momentum equations in \eqref{eq26} as
\begin{align}\label{eqj37}
\rho\left(\partial_{t} \zeta_{2}+{\bf u} \cdot \nabla \zeta_{2}\right)+p^{\prime}(\rho) \partial_{x_{2}} \phi-\mu \partial_{{1}}^{2} \zeta_{2}-\mu \Delta^{\prime} \zeta_{2}-(\mu+\lambda) \partial_{x_{2}} \operatorname{div} \zeta=0.
\end{align}
Multiplying the above equation by $\partial_{x_{2}} \phi$ and integrating the resulting equation with respect to $t, x$ over $(0, t) \times \Omega$, one has
\begin{align}\label{eqj31}
\begin{aligned}
& \int_{0}^{t} \int p^{\prime}(\rho)\left(\partial_{x_{2}} \phi\right)^{2} d x d \tau=\hat I_1+\hat I_2\\
:= & \int_{0}^{t} \int \mu \partial_{{1}}^{2} \zeta_{2} \partial_{x_{2}} \phi d x d \tau+\int_{0}^{t} \int\left(\mu \Delta^{\prime} \zeta_{2}+(\mu+\lambda) \partial_{x_{2}}\dv \zeta-\rho\left(\partial_{\tau} \zeta_{2}+{\bf u} \cdot \nabla \zeta_{2}\right)\right) \partial_{x_{2}} \phi d x d \tau.
\end{aligned}
\end{align}
Integration by parts under the boundary conditions \eqref{eq28} leads to
\begin{align}\label{eqj32}
\begin{aligned}
\hat I_1= & -\mu \int_{0}^{t} \int \partial_{{1}}^{2} \partial_{x_{2}} \zeta_{2} \phi d x d \tau
=  \left.\mu \int_{0}^{t} \int_{\mathbb{T}^{2}} \partial_{{1}} \partial_{x_{2}} \zeta_{2} \phi\right|_{x_{1}=0} d x_{2} d x_{3} d \tau+\mu \int_{0}^{t} \int \partial_{{1}} \partial_{x_{2}} \zeta_{2} \partial_{{1}} \phi d x d \tau \\
=&-\left.\mu \int_{0}^{t} \int_{\mathbb{T}^{2}} \frac{1}{k\left(x^{\prime}\right)} \zeta_{2} \partial_{x_{2}} \phi\right|_{x_{1}=0} d x_{2} d x_{3} d \tau+\mu \int_{0}^{t} \int \partial_{{1}} \partial_{x_{2}} \zeta_{2} \partial_{{1}} \phi d x d \tau.
\end{aligned}
\end{align}
By direct calculation, it yields 
\begin{align}\label{eqj33}
\begin{aligned}
 -\left.\mu \int_{0}^{t} \int_{\mathbb{T}^{2}} \frac{1}{k\left(x^{\prime}\right)} \zeta_{2} \partial_{x_{2}} \phi\right|_{x_{1}=0}& d x_{2} d x_{3} d \tau
\leq  C \int_{0}^{t} \int_{\mathbb{T}^{2}}\left|\zeta_{2}\right|_{x_{1}=0} \mid\left\|\partial_{x_{2}} \phi\right\|_{L^{\infty}(\mathbb R^+)} d x_{2} d x_{3} d \tau \\
\leq & C \int_{0}^{t} \int_{\mathbb{T}^{2}}\left|\zeta_{2}\right|_{x_{1}=0} \mid\left\|\partial_{x_{2}} \phi\right\|_{L^{2}(\mathbb R^+)}^{\frac{1}{2}}\left\|\partial_{{1}} \partial_{x_{2}} \phi\right\|_{L^{2}(\mathbb R^+)}^{\frac{1}{2}} d x_{2} d x_{3} d \tau \\
\leq & {C \int_{0}^{t}\left\|\left.\zeta_{2}\right|_{x_{1}=0}\right\|_{L^{2}\left(\mathbb{T}^{2}\right)}\left\|\partial_{x_{2}} \phi\right\|_{L^{2}(\Omega)}^{\frac{1}{2}}\left\|\partial_{{1}} \partial_{x_{2}} \phi\right\|_{L^{2}(\Omega)}^{\frac{1}{2}} d \tau} \\
\leq & \frac{1}{16} \int_{0}^{t}\left\|\sqrt{p^{\prime}(\rho)} \partial_{x_{2}} \phi\right\|^{2} d \tau+C \int_{0}^{t}\left(\left\|\left.\zeta_{2}\right|_{x_{1}=0}\right\|_{L^{2}\left(\mathbb{T}^{2}\right)}^{2}+\left\|\partial_{{1}} \partial_{x_{2}} \phi\right\|^{2}\right) d \tau.
\end{aligned}
\end{align}
Substituting the above inequality into \eqref{eqj32} and using Cauchy's inequality, one can obtain
\begin{align}\label{eqj35}
\begin{aligned}
\hat I_1
\leq  \frac{1}{16} \int_{0}^{t}&\left\|\sqrt{p^{\prime}(\rho)} \partial_{x_{2}} \phi\right\|^{2} d \tau+\eta \int_{0}^{t}\left\|\partial_{{1}} \phi\right\|^{2} d \tau+C_{\eta} \int_{0}^{t}\left\|\partial_{{1}} \partial_{x_{2}} \zeta_{2}\right\|^{2} d \tau \\
& +C \int_{0}^{t}\left(\left\|\left.\zeta_{2}\right|_{x_{1}=0}\right\|_{L^{2}\left(\mathbb{T}^{2}\right)}^{2}+\left\|\partial_{{1}} \partial_{x_{2}} \phi\right\|^{2}\right) d \tau,
\end{aligned}
\end{align}
where $\eta$ is some constant to be determined.
By Cauchy's inequality, one has
\begin{align}\label{eqj34}
\begin{aligned}
& \int_{0}^{t} \int\left(\mu \Delta^{\prime} \zeta_{2}+(\mu+\lambda) \partial_{x_{2}} \operatorname{div} \zeta-\rho\left(\partial_{\tau} \zeta_{2}+u \cdot \nabla \zeta_{2}\right)\right) \partial_{x_{2}} \phi d x d \tau \\
& \leq \frac{1}{16} \int_{0}^{t}\left\|\sqrt{p^{\prime}(\rho)} \partial_{x_{2}} \phi\right\|^{2} d \tau+C \int_{0}^{t}\left(\left\|\nabla \partial_{x^{\prime}} \zeta\right\|^{2}+\left\|\partial_{\tau} \zeta_{2}\right\|^{2}+\left\|\nabla \zeta_{2}\right\|^{2}\right) d \tau.
\end{aligned}
\end{align}
Substitution \eqref{eqj35} and \eqref{eqj34} into \eqref{eqj31} yields
\begin{align}\label{eqj36}
\begin{aligned}
\int_{0}^{t}\left\|\partial_{x_{2}} \phi\right\|^{2} d \tau \leq & \eta \int_{0}^{t}\left\|\partial_{{1}} \phi\right\|^{2} d \tau+C_{\eta} \int_{0}^{t}\left\|\nabla \partial_{x^{\prime}} \zeta\right\|^{2} d \tau \\
& +C \int_{0}^{t}\left(\left\|\partial_{\tau} \zeta_{2}\right\|^{2}+\left\|\nabla \zeta_{2}\right\|^{2}+\left\|\partial_{{1}} \partial_{x_{2}} \phi\right\|^{2}+\left\|\left.\zeta_{2}\right|_{x_{1}=0}\right\|_{L^{2}\left(\mathbb{T}^{2}\right)}^{2}\right) d \tau.
\end{aligned}
\end{align}
It follows from \eqref{eqj37} that
\begin{align}\label{eqj38}
\int_{0}^{t}\left\|\partial_{{1}}^{2} \zeta_{2}\right\|^{2} d \tau \leq C \int_{0}^{t}\left(\left\|\partial_{x_{2}} \phi\right\|^{2}+\left\|\partial_{\tau} \zeta_{2}\right\|^{2}+\left\|\nabla \zeta_{2}\right\|^{2}+\left\|\nabla \partial_{x^{\prime}} \zeta\right\|^{2}\right) d \tau+C\delta.
\end{align}
Then, multiplying \eqref{eqj36} by a large constant $C$ and combining with \eqref{eqj38}, one has
\begin{align}\label{eqj39}
\begin{aligned}
\int_{0}^{t}\left(\left\|\partial_{x_{2}} \phi\right\|^{2}+\left\|\partial_{{1}}^{2} \zeta_{2}\right\|^{2}\right) &d \tau \leq  \eta \int_{0}^{t}\left\|\partial_{{1}} \phi\right\|^{2} d \tau+C_{\eta} \int_{0}^{t}\left\|\nabla \partial_{x^{\prime}} \zeta\right\|^{2} d \tau \\
&+C \int_{0}^{t}\left(\left\|\partial_{\tau} \zeta_{2}\right\|^{2}+\left\|\nabla \zeta_{2}\right\|^{2}+\left\|\partial_{{1}} \partial_{x_{2}} \phi\right\|^{2}+\left\|\left.\zeta_{2}\right|_{x_{1}=0}\right\|_{L^{2}\left(\mathbb{T}^{2}\right)}^{2}\right) d \tau+C\delta.
\end{aligned}
\end{align}

Similar to \eqref{eqj39}, one can obtain the estimate of $\int_{0}^{t}\left(\left\|\partial_{x_{3}} \phi\right\|^{2}+\left\|\p_{1}^{2} \zeta_{3}\right\|^{2}\right) d \tau$. Then we have \eqref{eqj0}, and the proof of \cref{Lem5} is completed.
\end{proof}

The higher order derivative estimates can be given by the same methods as the lower order derivative estimates. We obtain the tangential derivative estimates as follows.
\begin{Lem}\label{Lem6}
 Let $T>0$ be a constant and $(\phi, \zeta) \in X(0, T)$ be the solution of \cref{eq26} satisfying a priori assumption \cref{C4} with suitably small $\chi+\delta$, it holds that for $t \in[0, T]$,
\begin{align}\label{eqj74}
\begin{aligned}
& \left\|\partial_{x^{\prime}}^{2}(\phi, \zeta)(t)\right\|^{2}+\int_{0}^{t}\left\|\nabla \partial_{x^{\prime}}^{2} \zeta\right\|^{2} d \tau+\int_{0}^{t}\left\|\left.\partial_{x^{\prime}}^{2} \zeta^{\prime}\right|_{x_{1}=0}\right\|_{L^{2}\left(\mathbb{T}^{2}\right)}^{2} d \tau
\leq  C\left(E^{2}(0)+M^{2}(t)+\delta\right).
\end{aligned}
\end{align}
\end{Lem}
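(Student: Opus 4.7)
The plan is to mimic the proof of \cref{Lem2} but with one more tangential derivative, i.e., to apply the operator $\partial_{x'}^2$ to the symmetrized system \eqref{eq12} and perform an $L^2$-energy estimate weighted appropriately by $p'(\rho)/\rho$ and $1$ on the density and velocity components respectively. Concretely, I would derive
\begin{align*}
&\partial_t(\partial_{x'}^2\phi)+{\bf u}\cdot\nabla(\partial_{x'}^2\phi)+\rho\,\dv(\partial_{x'}^2\zeta)=\partial_{x'}^2 f+\mathcal{C}_1,\\
&\rho\bigl(\partial_t\partial_{x'}^2\zeta+{\bf u}\cdot\nabla\partial_{x'}^2\zeta\bigr)+\nabla\bigl(p'(\rho)\partial_{x'}^2\phi\bigr)-\mu\Delta\partial_{x'}^2\zeta-(\mu+\lambda)\nabla\dv\partial_{x'}^2\zeta=\partial_{x'}^2 g+\mathcal{C}_2,
\end{align*}
where $\mathcal{C}_1,\mathcal{C}_2$ collect commutator terms of the form $\partial_{x'}^j(\rho,{\bf u})\cdot\partial_{x'}^{2-j}(\nabla\phi,\nabla\zeta)$ for $j=1,2$, together with linear pieces involving $\partial_{x'}^2\bar\rho,\partial_{x'}^2\bar{\bf u}$ which vanish since the background depends only on $x_1$. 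Multiplying the first equation by $\frac{p'(\rho)}{\rho}\partial_{x'}^2\phi$ and the second by $\partial_{x'}^2\zeta$, summing and integrating over $(0,t)\times\Omega$ produces the basic identity with good dissipation $\mu\|\nabla\partial_{x'}^2\zeta\|^2+(\mu+\lambda)\|\dv\partial_{x'}^2\zeta\|^2$ on the left, and a boundary contribution that, after using \eqref{eq28} differentiated twice in $x'$, yields $\mu\int_0^t\int_{\mathbb T^2}k(x')^{-1}|\partial_{x'}^2\zeta'|^2|_{x_1=0}$ plus lower-order boundary terms controlled exactly as the $I_5$ estimate in the proof of \cref{Lem2}.

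After this, the routine part is to estimate the nonlinear terms (cubic products like $\partial_{x'}^2\zeta\cdot\nabla\phi\,\partial_{x'}^2\phi$, $\partial_{x'}\rho\,\partial_{x'}\zeta\cdot\partial_{x'}^2\zeta$, etc.) using Hölder, the Gagliardo--Nirenberg inequality \eqref{eq13}, and the a priori bound \eqref{C4} to bound each one by $C(\chi+\delta)$ times quantities that are already in the definition of $M^2(t)$, and to absorb the linear $\delta$-terms coming from $\partial_{x'}^2 f,\partial_{x'}^2 g$ into $C\delta\int_0^t(\|\nabla(\phi,\zeta_1)\|_{H^1}^2+\|\partial_1(\Phi,\Psi)\|_{L^2(\mathbb{R}^+)}^2)d\tau$, which is part of $M^2(t)$ via \cref{CLem3,CLem5}.

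The main obstacle I anticipate is the boundary term coming from the normal component $\partial_{x'}^2\zeta_1|_{x_1=0}$, because the boundary condition in \eqref{eq28} for $\zeta_1$ is $\zeta_1|_{x_1=0}=-A(t)/\rho(t,0,x')$, so each tangential differentiation produces a factor with $\partial_{x'}\rho=\partial_{x'}\phi$. Thus $\partial_{x'}^2\zeta_1|_{x_1=0}$ will be an expression of the form $A(t)\cdot O\bigl(\partial_{x'}^2\phi+(\partial_{x'}\phi)^2\bigr)$, and handling it requires the same trick used for $I_9$ in \cref{Lem2} (proved in~\eqref{C100}): pull out $|A(t)|\le C\delta e^{-c\delta t}$, use Gagliardo--Nirenberg on $\Omega$ in the form $\|g|_{x_1=0}\|_{L^2(\mathbb T^2)}\le C\|g\|^{1/2}\|\partial_1 g\|^{1/2}$, and absorb the resulting $H^3$-norm of $(\phi,\zeta)$ into $\sup_t E(t)$ so that the product is bounded by $C\delta\cdot\sup_t E^{1/2}(t)\cdot \int_0^\infty|\bar{\bf u}(-s\tau+\alpha)|d\tau\le C\delta$.

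Finally, assembling these estimates and using the already-proven \cref{Lem1,Lem2,Lem3,Lem4,Lem5,CLem5} to bound the auxiliary dissipative terms on the right-hand side, one obtains \eqref{eqj74}. The initial data contribution $\|\partial_{x'}^2(\phi_0,\zeta_0)\|^2$ is absorbed into $E^2(0)$, and all other error terms get swallowed by $C(E^2(0)+M^2(t)+\delta)$, completing the proof.
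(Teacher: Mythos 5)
Your proposal follows essentially the same route as the paper's own proof of \cref{Lem6}: apply $\partial_{x'}^2$ to the symmetrized system \cref{eq12}, test against $\frac{p'(\rho)}{\rho}\partial_{x'}^2\phi$ and $\partial_{x'}^2\zeta$, use the Navier boundary conditions to turn the tangential boundary contributions into a signed term $\mu\int_{\mathbb T^2}k^{-1}|\partial_{x'}^2\zeta'|^2|_{x_1=0}$ (with lower-order boundary remainders estimated as in $I_5$/$J_1$), treat the normal boundary term $\partial_{x'}^2\zeta_1|_{x_1=0}$ exactly as $I_9$/$J_7$ via the $A(t)$-smallness and the one-dimensional Gagliardo--Nirenberg trace trick, and absorb the commutator/nonlinear terms into $C(\chi+\delta)M^2(t)$ plus $C\delta$. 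This matches the paper's decomposition into $J_1$--$J_7$ and its subsequent estimates term by term.
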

\begin{proof}
 Applying $\partial_{x^{\prime}}^{2}$ to \cref{eq12}, one has
\begin{align}\label{eqlem7}
\left\{\begin{aligned}
&\partial_{t} \partial_{x^{\prime}}^{2} \phi+{\bf u} \cdot \nabla \partial_{x^{\prime}}^{2} \phi+\rho \operatorname{div} \partial_{x^{\prime}}^{2} \zeta=-\left[\partial_{x^{\prime}}^{2}, {\bf u} \cdot \nabla\right] \phi-\left[\partial_{x^{\prime}}^{2}, \rho\right] \operatorname{div} \zeta+\partial_{x^{\prime}}^{2} f, \\
&\rho\left(\partial_{t} \partial_{x^{\prime}}^{2} \zeta+{\bf u} \cdot \nabla \partial_{x^{\prime}}^{2} \zeta\right)+\nabla\left(p^{\prime}(\rho) \partial_{x^{\prime}}^{2} \phi\right)-\mu \Delta \partial_{x^{\prime}}^{2} \zeta-(\mu+\lambda) \nabla \operatorname{div} \partial_{x^{\prime}}^{2} \zeta \\
&\qquad=p^{\prime \prime}(\rho) \nabla \rho \partial_{x^{\prime}}^{2} \phi-\left[\partial_{x^{\prime}}^{2}, \rho\right] \partial_{t} \zeta-\left[\partial_{x^{\prime}}^{2}, \rho {\bf u} \cdot \nabla\right] \zeta-\left[\partial_{x^{\prime}}^{2}, p^{\prime}(\rho)\right] \nabla \phi+\partial_{x^{\prime}}^{2} g.
\end{aligned}\right.
\end{align}
 Denote $[A, B]:=A B-B A$ as the commutator of $A$ and $B$. Multiplying the equation $\cref{eqlem7}_{1}$ by $\frac{p^{\prime}(\rho)}{\rho} \partial_{x^{\prime}}^{2} \phi,\cref{eqlem7}_{2}$ by $\partial_{x^{\prime}}^{2} \zeta$ and adding them up, and then integrating the resulting equation with respect to $t, x$ over $[0, t] \times \Omega$, one has
\begin{equation}\label{eqj107}
\begin{aligned}
& \left.\int\left(\frac{p^{\prime}(\rho)}{\rho} \frac{\left(\partial_{x^{\prime}}^{2} \phi\right)^{2}}{2}+\rho \frac{\left|\partial_{x^{\prime}}^{2} \zeta\right|^{2}}{2}\right) d x\right|_{\tau=0} ^{\tau=t}+\int_{0}^{t} \int\left(\mu\left|\nabla \partial_{x^{\prime}}^{2} \zeta\right|^{2}+(\mu+\lambda)\left(\operatorname{div} \partial_{x^{\prime}}^{2} \zeta\right)^{2}\right) d x d \tau \\
& +\left.\mu \int_{0}^{t} \int_{\mathbb{T}^{2}} \frac{1}{k\left(x^{\prime}\right)}\left(\left(\partial_{x^{\prime}}^{2} \zeta_{2}\right)^{2}+\left(\partial_{x^{\prime}}^{2} \zeta_{3}\right)^{2}\right)\right|_{x_{1}=0} d x_{2} d x_{3} d \tau \\
= & -\left.\mu \sum_{i=2}^{3} \int_{0}^{t} \int_{\mathbb{T}^{2}}\left[\partial_{x^{\prime}}^{2}\left(\frac{1}{k\left(x^{\prime}\right)}\right) \zeta_{i} \partial_{x^{\prime}}^{2} \zeta_{i}+2 \partial_{x^{\prime}}\left(\frac{1}{k\left(x^{\prime}\right)}\right) \partial_{x^{\prime}} \zeta_{i} \partial_{x^{\prime}}^{2} \zeta_{i}\right]\right|_{x_{1}=0} d x_{2} d x_{3} d \tau \\
& +\int_{0}^{t} \int(3-\gamma) \frac{p^{\prime}(\rho)}{\rho} \operatorname{div} {\bf u} \frac{\left(\partial_{x^{\prime}}^{2} \phi\right)^{2}}{2} d x d \tau+\int_{0}^{t} \int p^{\prime \prime}(\rho) \partial_{x^{\prime}}^{2} \zeta \cdot \nabla \rho \partial_{x^{\prime}}^{2} \phi d x d \tau \\
& -\int_{0}^{t} \int\left(\left[\partial_{x^{\prime}}^{2}, {\bf u} \cdot \nabla\right] \phi+\left[\partial_{x^{\prime}}^{2}, \rho\right] \operatorname{div} \zeta\right) \frac{p^{\prime}(\rho)}{\rho} \partial_{x^{\prime}}^{2} \phi d x d \tau \\
 &- \int_{0}^{t} \int\left(\left[\partial_{x^{\prime}}^{2}, \rho\right] \partial_{\tau} \zeta+\left[\partial_{x^{\prime}}^{2}, \rho {\bf u} \cdot \nabla\right] \zeta+\left[\partial_{x^{\prime}}^{2}, p^{\prime}(\rho)\right] \nabla \phi\right) \cdot \partial_{x^{\prime}}^{2} \zeta d x d \tau \\
 &+ \int_{0}^{t} \int\left(\frac{p^{\prime}(\rho)}{\rho} \partial_{x^{\prime}}^{2} f \partial_{x^{\prime}}^{2} \phi+\partial_{x^{\prime}}^{2} g \cdot \partial_{x^{\prime}}^{2} \zeta\right) d x d \tau\\
&+\int_0^t \int_{\mathbb{T}^2}  (\mu+\lambda) \operatorname{div} \left(\partial^2_{x^{\prime}}\zeta\right)\partial^2_{x^{\prime}}\zeta_1 -\mu \partial_{x_1}\partial^2_{x^{\prime}}\zeta_1\partial^2_{x^{\prime}}\zeta_1 - \partial^2_{x^{\prime}}\zeta_1 {p^{\prime}(\rho)} \partial^2_{x^{\prime}} \phi\Big|_{x_1=0} d x_2 d x_3d \tau=\sum_{i=1}^{7} J_{i} .\\
\end{aligned}
\end{equation}
For $J_{1}$, we have
\begin{align}\label{eqj102}
\begin{aligned}
J_{1} \leq & \left.\frac{\mu}{2} \int_{0}^{t} \int_{\mathbb{T}^{2}} \frac{1}{k\left(x^{\prime}\right)}\left(\left(\partial_{x^{\prime}}^{2} \zeta_{2}\right)^{2}+\left(\partial_{x^{\prime}}^{2} \zeta_{3}\right)^{2}\right)\right|_{x_{1}=0} d x_{2} d x_{3} d \tau \\
& +\left.C \int_{0}^{t} \int_{\mathbb{T}^{2}} \frac{1}{k\left(x^{\prime}\right)}\left(\left|\zeta^{\prime}\right|^{2}+\left|\partial_{x^{\prime}} \zeta^{\prime}\right|^{2}\right)\right|_{x_{1}=0} d x_{2} d x_{3} d \tau .
\end{aligned}
\end{align}
Similar to \eqref{eqj71}, it yields 
\begin{align}\label{eqj103}
J_{2} \leq C(\chi+\delta) \int_{0}^{t}\left(\left\|\partial_{x^{\prime}}^{2} \phi\right\|^{2}+\left\|\nabla^{2} \operatorname{div} \zeta\right\|^{2}\right) d \tau.
\end{align}
It follows from Cauchy's inequality, Sobolev's inequality and assumption (3.5) that
\begin{align}\label{eqj104}
\begin{aligned}
J_{3} & \leq C \int_{0}^{t}\left(\left\|\partial_{x^{\prime}}^{2} \zeta\right\|_{L^{6}}\|\nabla \phi\|_{L^{3}}\left\|\partial_{x^{\prime}}^{2} \phi\right\|+\left\|\p_{1} \bar{\rho}\right\|_{L^{\infty}}\left\|\partial_{x^{\prime}}^{2} \zeta_{1}\right\|\left\|\partial_{x^{\prime}}^{2} \phi\right\|\right) d \tau \\
& \leq C(\chi+\delta) \int_{0}^{t}\left(\left\|\partial_{x^{\prime}}^{2} \zeta\right\|_{H^{1}}^{2}+\left\|\partial_{x^{\prime}}^{2} \phi\right\|^{2}\right) d \tau.
\end{aligned}
\end{align}
Note that
$$
\begin{aligned}
& {\left[\partial_{x^{\prime}}^{2}, {\bf u} \cdot \nabla\right] \phi=\partial_{x^{\prime}}^{2}({\bf u} \cdot \nabla \phi)-{\bf u} \cdot \nabla \partial_{x^{\prime}}^{2} \phi=\partial_{x^{\prime}}^{2} {\bf u} \cdot \nabla \phi+2\partial_{x^{\prime}} {\bf u} \cdot \nabla \partial_{x^{\prime}} \phi} \\
& {\left[\partial_{x^{\prime}}^{2}, \rho\right] \operatorname{div} \zeta=\partial_{x^{\prime}}^{2}(\rho \operatorname{div} \zeta)-\rho \operatorname{div} \partial_{x^{\prime}}^{2} \zeta=\partial_{x^{\prime}}^{2} \rho \operatorname{div} \zeta+2\partial_{x^{\prime}} \rho \operatorname{div} \partial_{x^{\prime}} \zeta},
\end{aligned}
$$
by using Cauchy's inequality and Sobolev's inequality, one has 
\begin{align}\label{eqj100}
\begin{aligned}
J_{4} & \leq C \int_{0}^{t}\left(\left\|\nabla \partial_{x^{\prime}} \zeta\right\|_{L^{6}}\|\nabla \phi\|_{L^{3}}+\|\nabla \zeta\|_{L^{\infty}}\left\|\nabla \partial_{x^{\prime}} \phi\right\|\right)\left\|\partial_{x^{\prime}}^{2} \phi\right\| d x d \tau \\
& \leq C \chi \int_{0}^{t}\left(\left\|\nabla^{2} \zeta\right\|_{H^{1}}^{2}+\left\|\nabla \partial_{x^{\prime}} \phi\right\|^{2}\right) d \tau.
\end{aligned}
\end{align}
Similar to \eqref{eqj100}, it yields
\begin{align}\label{eqj105}
J_{5} \leq C \chi \int_{0}^{t}\left(\left\|\partial_{\tau} \zeta\right\|_{H^{1}}^{2}+\left\|\partial_{x^{\prime}}^{2} \zeta\right\|_{H^{1}}^{2}+\left\|\partial_{x^{\prime}} \phi\right\|_{H^{1}}^{2}\right) d \tau.
\end{align}
By Cauchy's inequality and \eqref{eqj101}, one gets 
\begin{align}\label{eqj106}
J_{6} \leq C\delta \int_{0}^{t}\left(\left\|\partial_{x^{\prime}} \phi\right\|_{H^{1}}^{2}+\left\|\partial_{x^{\prime}} \zeta\right\|_{H^{1}}^{2}\right) d \tau.
\end{align}
Similar like  \eqref{C100}, one gets that $J_7 \leq C \delta$. And substitution \eqref{eqj102}-\eqref{eqj106} into \eqref{eqj107} leads to \eqref{eqj74}, then the proof of \cref{Lem6} is completed.
\end{proof}
\begin{Lem}\label{Lem7}
 Let $T>0$ be a constant and $(\phi, \zeta) \in X(0, T)$ be the solution of \eqref{eq26} satisfying a priori assumption \eqref{C4} with suitably small $\chi+\delta$, we have for $t \in[0, T]$,
\begin{align}\label{eqj77}
\begin{aligned}
& \left\|\nabla^{2} \zeta(t)\right\|^{2}+\left\|\left.\partial_{x^{\prime}} \zeta^{\prime}\right|_{x_{1}=0}(t)\right\|_{L^{2}\left(\mathbb{T}^{2}\right)}^{2} \\
& \leq C\left(\left\|\partial_{t} \zeta(t)\right\|^{2}+\|\nabla(\phi, \zeta)(t)\|^{2}\right)+C\left\|\left.\zeta^{\prime}\right|_{x_{1}=0}(t)\right\|_{L^{2}\left(\mathbb{T}^{2}\right)}^{2}+C \chi\left\|\left(\phi, \zeta_{1}\right)(t)\right\|^{2}+C\delta.
\end{aligned}
\end{align}
\end{Lem}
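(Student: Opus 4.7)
The plan is to treat the momentum equation of~\eqref{eq26} as a Lam\'e-type elliptic system for $\zeta(t,\cdot)$ at each fixed time~$t$, with source
\[ G := -\rho\bigl(\p_t\zeta + {\bf u}\cdot\nabla\zeta\bigr) - \nabla\bigl(p(\rho)-p(\bar\rho)\bigr) - \rho(\zeta\cdot\nabla)\bar{\bf u} - \phi\bigl(\bar{\bf u}_t + \bar{\bf u}\cdot\nabla\bar{\bf u}\bigr), \]
so that $-\mu\Delta\zeta - (\mu+\lambda)\nabla\dv\zeta = G$. By \cref{lem1}, \cref{Lem-GN}, the uniform bound $|{\bf u}|\le C$, and the a priori assumption~\eqref{C4}, the source satisfies $\|G\|^2 \le C\|\p_t\zeta\|^2 + C\|\nabla(\phi,\zeta)\|^2 + C\chi\|(\phi,\zeta_1)\|^2 + C\delta$.

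First I would test this identity against $-\Delta\zeta$ and integrate over $\Omega$. Two integrations by parts in the cross term $\int\nabla\dv\zeta\cdot\Delta\zeta\,dx$ lead to
\[ \mu\|\Delta\zeta\|^2 + (\mu+\lambda)\|\nabla\dv\zeta\|^2 + (\mu+\lambda)\mathcal{B}_{\mathrm{div}} = \int_\Omega G\cdot(-\Delta\zeta)\,dx, \]
where $\mathcal{B}_{\mathrm{div}}$ collects boundary integrals at $\{x_1=0\}$ of the form $\int_{x_1=0}\p_j\dv\zeta\,\p_j\zeta_1\,dx' - \int_{x_1=0}\p_i\dv\zeta\,\p_1\zeta_i\,dx'$. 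Since $\mu+\lambda\ge 0$, Cauchy's inequality on the right produces $\tfrac{\mu}{2}\|\Delta\zeta\|^2 \le C\|G\|^2 + (\mu+\lambda)|\mathcal{B}_{\mathrm{div}}|$. Converting $\|\Delta\zeta\|^2$ into $\|\nabla^2\zeta\|^2$ via the standard half-space identity $\|\Delta\zeta_i\|^2 = \|\nabla^2\zeta_i\|^2 + \mathcal{B}_{\mathrm{reg},i}$ adds further boundary integrals $\mathcal{B}_{\mathrm{reg},i}$ involving $\nabla\zeta_i$ and $\nabla^2\zeta_i$ at $x_1=0$.

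Next I would reduce the accumulated boundary pieces using the Navier boundary condition~\eqref{eq28}. The condition $\zeta_1|_{x_1=0}=-A(t)/\rho$ and $|A(t)|\le C\delta$ show that every boundary contribution involving $\zeta_1$ or its tangential derivatives is of order $\delta$, possibly up to boundary traces of $\phi$ multiplied by $\delta$. For $i=2,3$, the identity $\p_1\zeta_i|_{x_1=0}=\zeta_i/k(x')$ allows one to replace each normal derivative of $\zeta_i$ at the boundary by $\zeta_i$ itself; combining this with a tangential integration by parts on $\mathbb{T}^2$, each integral of the form $\int_{x_1=0}\p_{x'}\zeta_i\,\p_1\p_{x'}\zeta_i\,dx'$ produces a dominant contribution
\[ -\int_{x_1=0}\frac{(\p_{x'}\zeta_i)^2}{k(x')}\,dx', \]
with favorable sign, which transfers $\|\p_{x'}\zeta'|_{x_1=0}\|^2_{L^2(\mathbb{T}^2)}$ onto the left-hand side. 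The residual boundary terms are bounded by $C\|\zeta'|_{x_1=0}\|^2_{L^2(\mathbb{T}^2)} + C\delta$.

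The main obstacle is this last step: carefully organizing the boundary integrals $\mathcal{B}_{\mathrm{div}}$ and $\sum_i\mathcal{B}_{\mathrm{reg},i}$ and applying the Navier conditions in the right order so that the good sign on $\|\p_{x'}\zeta'|_{x_1=0}\|^2$ emerges cleanly, while the remaining boundary pieces are absorbed by $\|\zeta'|_{x_1=0}\|^2 + \delta$. Without this careful bookkeeping, $(\mu+\lambda)\mathcal{B}_{\mathrm{div}}$ has indefinite sign and could destroy the estimate. Once this is achieved, combining the bulk and boundary estimates with the bound on $\|G\|^2$ closes~\eqref{eqj77}.
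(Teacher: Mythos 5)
Your approach is essentially the same as the paper's: test the momentum equation against $-\Delta\zeta$, convert $\|\Delta\zeta\|^2$ to $\|\nabla^2\zeta\|^2$ plus boundary terms, integrate the cross term by parts, and then exploit the Navier condition to extract the good-signed boundary contribution $\int_{x_1=0}\frac{1}{k(x')}|\p_{x'}\zeta'|^2\,dx'$. Two remarks. First, a sign slip: after substituting $\p_1\p_{x'}\zeta_i|_{x_1=0}=\p_{x'}(\zeta_i/k)$ for $i=2,3$, the dominant contribution is $+\int_{x_1=0}\frac{(\p_{x'}\zeta_i)^2}{k}\,dx'$ (not $-$); it sits on the same side as $\|\nabla^2\zeta\|^2$, which is exactly what gives the left-hand side of~\cref{eqj77}. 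Second, and more substantively, your statement that ``every boundary contribution involving $\zeta_1$ or its tangential derivatives is of order $\delta$, and the residual boundary terms are bounded by $C\|\zeta'|_{x_1=0}\|^2+C\delta$'' leaves out the one boundary term that is genuinely dangerous: the cross term from $(\mu+\lambda)\int\Delta\zeta\cdot\nabla\dv\zeta\,dx$ produces $-(\mu+\lambda)\int_{x_1=0}\frac{1}{k}(\nabla'\cdot\zeta')\,\p_1\zeta_1\,dx'$, and $\p_1\zeta_1|_{x_1=0}$ is a \emph{normal} derivative, not determined by the Navier condition and not $O(\delta)$. The paper handles this via the one-dimensional trace interpolation $\|\p_1\zeta_1\|_{L^\infty(\mathbb R^+)}\le\|\p_1\zeta_1\|_{L^2(\mathbb R^+)}^{1/2}\|\p_1^2\zeta_1\|_{L^2(\mathbb R^+)}^{1/2}$ together with the analogous bound on $\nabla'\cdot\zeta'$, yielding $\le\frac{\mu}{4}\|\nabla^2\zeta\|^2+C\|\nabla\zeta\|^2$, with the $\frac{\mu}{4}\|\nabla^2\zeta\|^2$ piece absorbed into the left. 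The same trace trick is also used once more on the boundary term involving $\zeta_i\,\p_{x_i}(1/k)\,\dv\zeta$. Your source bound on $\|G\|^2$ already produces the needed $C\|\nabla\zeta\|^2$, so the final estimate does close as you claim, but the mechanism controlling $\p_1\zeta_1|_{x_1=0}$ is interpolation (at the cost of absorbing part of $\|\nabla^2\zeta\|^2$), not the boundary condition.
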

\begin{proof}
    We first note the following two facts:
\begin{align*}
\begin{aligned}
& \mu \int|\Delta \zeta|^{2} d x 
= \mu \int\left(\left|\p_{1}^{2} \zeta\right|^{2}+\left|\partial_{x_{2}}^{2} \zeta\right|^{2}+\left|\partial_{x_{3}}^{2} \zeta\right|^{2}\right) d x  +2 \mu \int\left(\partial_{x_{2}}^{2} \zeta \cdot \partial_{x_{3}}^{2} \zeta+\p_{1}^{2} \zeta \cdot \partial_{x_{2}}^{2} \zeta+\p_{1}^{2} \zeta \cdot \partial_{x_{3}}^{2} \zeta\right) d x \\
= & \mu \int\left(\left|\p_{1}^{2} \zeta\right|^{2}+\left|\partial_{x_{2}}^{2} \zeta\right|^{2}+\left|\partial_{x_{3}}^{2} \zeta\right|^{2}+2\left|\partial_{x_{2}} \partial_{x_{3}} \zeta\right|^{2}\right) d x  -2 \mu \int\left(\p_{1}^{2} \partial_{x_{2}} \zeta \cdot \partial_{x_{2}} \zeta+\p_{1}^{2} \partial_{x_{3}} \zeta \cdot \partial_{x_{3}} \zeta\right) d x \\
= & \mu\left\|\nabla^{2} \zeta\right\|^{2}+\left.2 \mu \int_{\mathbb{T}^{2}}\left(\p_{1} \partial_{x_{2}} \zeta \cdot \partial_{x_{2}} \zeta+\p_{1} \partial_{x_{3}} \zeta \cdot \partial_{x_{3}} \zeta\right)\right|_{x_{1}=0} d x_{2} d x_{3} \\
= & \mu\left\|\nabla^{2} \zeta\right\|^{2}+\left.2 \mu \int_{\mathbb{T}^{2}} \frac{1}{k\left(x^{\prime}\right)}\left|\partial_{x^{\prime}} \zeta^{\prime}\right|^{2}\right|_{x_{1}=0} d x_{2} d x_{3}\\
&+\left.2 \mu \sum_{i=2}^{3} \int_{\mathbb{T}^{2}}\left[\partial_{x_{2}}\left(\frac{1}{k\left(x^{\prime}\right)}\right) \partial_{x_{2}} \zeta_{i} \zeta_{i}+\partial_{x_{3}}\left(\frac{1}{k\left(x^{\prime}\right)}\right) \partial_{x_{3}} \zeta_{i} \zeta_{i}\right]\right|_{x_{1}=0} d x_{2} d x_{3},
\end{aligned}
\end{align*}
\begin{align*}
\begin{aligned}
& (\mu+\lambda) \int \Delta \zeta \cdot \nabla \operatorname{div} \zeta d x 
=  (\mu+\lambda) \int \operatorname{div}(\Delta \zeta \operatorname{div} \zeta-\nabla \operatorname{div} \zeta \operatorname{div} \zeta) d x+(\mu+\lambda)\|\nabla \operatorname{div} \zeta\|^{2} \\
= & (\mu+\lambda)\|\nabla \operatorname{div} \zeta\|^{2}-\left.(\mu+\lambda) \int_{\mathbb{T}^{2}}\left(\Delta \zeta_{1}-\p_{1} \operatorname{div} \zeta\right) \operatorname{div} \zeta\right|_{x_{1}=0} d x_{2} d x_{3} \\
= & (\mu+\lambda)\|\nabla \operatorname{div} \zeta\|^{2}+\left.(\mu+\lambda) \int_{\mathbb{T}^{2}}\left(\partial_{x_{2}} \p_{1} \zeta_{2}+\partial_{x_{3}} \p_{1} \zeta_{3}\right) \operatorname{div} \zeta\right|_{x_{1}=0} d x_{2} d x_{3} \\
= & (\mu+\lambda)\|\nabla \operatorname{div} \zeta\|^{2}+\left.(\mu+\lambda) \int_{\mathbb{T}^{2}}\left(\partial_{x_{2}}\left(\frac{\zeta_{2}}{k\left(x^{\prime}\right)}\right)+\partial_{x_{3}}\left(\frac{\zeta_{3}}{k\left(x^{\prime}\right)}\right)\right) \operatorname{div} \zeta\right|_{x_{1}=0} d x_{2} d x_{3} \\
= & (\mu+\lambda)\|\nabla \operatorname{div} \zeta\|^{2}+\left.(\mu+\lambda) \int_{\mathbb{T}^{2}} \frac{1}{k\left(x^{\prime}\right)}\left|\nabla^{\prime} \cdot \zeta^{\prime}\right|^{2}\right|_{x_{1}=0} d x_{2} d x_{3} \\
& +\left.(\mu+\lambda) \int_{\mathbb{T}^{2}} \frac{1}{k\left(x^{\prime}\right)} \nabla^{\prime} \cdot \zeta^{\prime} \p_{1} \zeta_{1}\right|_{x_{1}=0} d x_{2} d x_{3}  +\left.(\mu+\lambda) \sum_{i=2}^{3} \int_{\mathbb{T}^{2}} \zeta_{i} \partial_{x_{i}}\left(\frac{1}{k\left(x^{\prime}\right)}\right) \operatorname{div} \zeta\right|_{x_{1}=0} d x_{2} d x_{3}.
\end{aligned}
\end{align*}
Then multiplying $\cref{eq12}_{2}$ by $-\Delta \zeta$ and then integrating the resulting equation with respect to $x$ over $\Omega$, it yields
\begin{align}\label{eqj112}
\begin{aligned}
 &\mu\left\|\nabla^{2} \zeta\right\|^{2}+(\mu+\lambda)\|\nabla \operatorname{div} \zeta\|^{2}+\left.2 \mu \int_{\mathbb{T}^{2}} \frac{1}{k\left(x^{\prime}\right)}\left|\partial_{x^{\prime}} \zeta^{\prime}\right|^{2}\right|_{x_{1}=0} d x_{2} d x_{3} \\
& +\left.(\mu+\lambda) \int_{\mathbb{T}^{2}} \frac{1}{k\left(x^{\prime}\right)}\left|\nabla^{\prime} \cdot \zeta^{\prime}\right|^{2}\right|_{x_{1}=0} d x_{2} d x_{3}
=  -\left.(\mu+\lambda) \int_{\mathbb{T}^{2}} \frac{1}{k\left(x^{\prime}\right)} \nabla^{\prime} \cdot \zeta^{\prime} \p_{1} \zeta_{1}\right|_{x_{1}=0} d x_{2} d x_{3} \\
&\qquad \qquad\qquad\qquad -\left.2 \mu \sum_{i=2}^{3} \int_{\mathbb{T}^{2}}\left[\partial_{x_{2}}\left(\frac{1}{k\left(x^{\prime}\right)}\right) \partial_{x_{2}} \zeta_{i} \zeta_{i}+\partial_{x_{3}}\left(\frac{1}{k\left(x^{\prime}\right)}\right) \partial_{x_{3}} \zeta_{i} \zeta_{i}\right]\right|_{x_{1}=0} d x_{2} d x_{3} \\
& \qquad \qquad\qquad\qquad-\left.(\mu+\lambda) \sum_{i=2}^{3} \int_{\mathbb{T}^{2}} \zeta_{i} \partial_{x_{i}}\left(\frac{1}{k\left(x^{\prime}\right)}\right) \operatorname{div} \zeta\right|_{x_{1}=0} d x_{2} d x_{3} \\
&\qquad \qquad\qquad\qquad+\int\left(p^{\prime}(\rho) \nabla \phi+\rho \partial_{t} \zeta+\rho {\bf u} \cdot \nabla \zeta-g\right) \cdot \Delta \zeta d x .
\end{aligned}
\end{align}
{According to} Cauchy's inequality and Sobolev's inequality, one has 
\begin{align}\label{eqj108}
\begin{aligned}
&-\left.(\mu+\lambda) \int_{\mathbb{T}^{2}} \frac{1}{k\left(x^{\prime}\right)} \nabla^{\prime} \cdot \zeta^{\prime} \p_{1} \zeta_{1}\right|_{x_{1}=0} d x_{2} d x_{3}
\leq  C \int_{\mathbb{T}^{2}}\left\|\nabla^{\prime} \cdot \zeta^{\prime}\right\|_{L^{\infty}(\mathbb R^+)}\left\|\p_{1} \zeta_{1}\right\|_{L^{\infty}(\mathbb R^+)} d x_{2} d x_{3} \\
&\qquad\qquad\qquad\qquad\leq  C \int_{\mathbb{T}^{2}}\left\|\nabla^{\prime} \cdot \zeta^{\prime}\right\|_{L^{2}(\mathbb R^+)}^{\frac{1}{2}}\left\|\p_{1} \nabla^{\prime} \cdot \zeta^{\prime}\right\|_{L^{2}(\mathbb R^+)}^{\frac{1}{2}}\left\|\p_{1} \zeta_{1}\right\|_{L^{2}(\mathbb R^+)}^{\frac{1}{2}}\left\|\p_{1}^{2} \zeta_{1}\right\|_{L^{2}(\mathbb R^+)}^{\frac{1}{2}} d x_{2} d x_{3}\\
&\qquad\qquad\qquad\qquad\leq C\left\|\nabla^{\prime} \cdot \zeta^{\prime}\right\|^{\frac{1}{2}}\left\|\p_{1} \nabla^{\prime} \cdot \zeta^{\prime}\right\|^{\frac{1}{2}}\left\|\p_{1} \zeta_{1}\right\|^{\frac{1}{2}}\left\|\p_{1}^{2} \zeta_{1}\right\|^{\frac{1}{2}} \leq \frac{\mu}{4}\left\|\nabla^{2} \zeta\right\|^{2}+C\|\nabla \zeta\|^{2}.
\end{aligned}
\end{align}
Consequently,
\begin{align}\label{eqj109}
\begin{aligned}
& -\left.2 \mu \sum_{i=2}^{3} \int_{\mathbb{T}^{2}}\left[\partial_{x_{2}}\left(\frac{1}{k\left(x^{\prime}\right)}\right) \partial_{x_{2}} \zeta_{i} \zeta_{i}+\partial_{x_{3}}\left(\frac{1}{k\left(x^{\prime}\right)}\right) \partial_{x_{3}} \zeta_{i} \zeta_{i}\right]\right|_{x_{1}=0} d x_{2} d x_{3} \\
\leq & \left.\frac{\mu}{2} \int_{\mathbb{T}^{2}} \frac{1}{k\left(x^{\prime}\right)}\left|\partial_{x^{\prime}} \zeta^{\prime}\right|^{2}\right|_{x_{1}=0} d x_{2} d x_{3}+\left.C \int_{\mathbb{T}^{2}}\left|\zeta^{\prime}\right|^{2}\right|_{x_{1}=0} d x_{2} d x_{3},
\end{aligned}
\end{align}
and
\begin{align}\label{eqj110}
\begin{aligned}
& -\left.(\mu+\lambda) \sum_{i=2}^{3} \int_{\mathbb{T}^{2}} \zeta_{i} \partial_{x_{i}}\left(\frac{1}{k\left(x^{\prime}\right)}\right) \operatorname{div} \zeta\right|_{x_{1}=0} d x_{2} d x_{3} \\
\leq & \left.\frac{\mu}{2} \int_{\mathbb{T}^{2}} \frac{1}{k\left(x^{\prime}\right)}\left|\partial_{x^{\prime}} \zeta^{\prime}\right|^{2}\right|_{x_{1}=0} d x_{2} d x_{3}+\left.C \int_{\mathbb{T}^{2}}\left|\zeta^{\prime}\right|^{2}\right|_{x_{1}=0} d x_{2} d x_{3} \\
& +C \int_{\mathbb{T}^{2}}\left|\zeta^{\prime}\right|_{x_{1}=0} \mid\left\|\p_{1} \zeta_{1}\right\|_{L^{\infty}(\mathbb R^+)} d x_{2} d x_{3} \\
\leq & \left.\frac{\mu}{2} \int_{\mathbb{T}^{2}} \frac{1}{k\left(x^{\prime}\right)}\left|\partial_{x^{\prime}} \zeta^{\prime}\right|^{2}\right|_{x_{1}=0} d x_{2} d x_{3}+\left.C \int_{\mathbb{T}^{2}}\left|\zeta^{\prime}\right|^{2}\right|_{x_{1}=0} d x_{2} d x_{3} \\
& +C \int_{\mathbb{T}^{2}}\left|\zeta^{\prime}\right|_{x_{1}=0} \mid\left\|\p_{1} \zeta_{1}\right\|_{L^{2}(\mathbb R^+)}^{\frac{1}{2}}\left\|\p_{1}^{2} \zeta_{1}\right\|_{L^{2}(\mathbb R^+)}^{\frac{1}{2}} d x_{2} d x_{3} \\
\leq & \left.\frac{\mu}{2} \int_{\mathbb{T}^{2}} \frac{1}{k\left(x^{\prime}\right)}\left|\partial_{x^{\prime}} \zeta^{\prime}\right|^{2}\right|_{x_{1}=0} d x_{2} d x_{3}+\frac{\mu}{4}\left\|\nabla^{2} \zeta\right\|^{2}+C\left(\left\|\left.\zeta^{\prime}\right|_{x_{1}=0}\right\|_{L^{2}\left(\mathbb{T}^{2}\right)}^{2}+\left\|\p_{1} \zeta_{1}\right\|^{2}\right) .
\end{aligned}
\end{align}
Then one gets
\begin{align}\label{eqj111}
\begin{aligned}
& \int\left(p^{\prime}(\rho) \nabla \phi+\rho \partial_{t} \zeta+\rho {\bf u} \cdot \nabla \zeta-g\right) \cdot \Delta \zeta d x \\
\leq & \frac{\mu}{4}\left\|\nabla^{2} \zeta\right\|^{2}+C\left(\|\nabla \phi\|^{2}+\|\nabla \zeta\|^{2}+\left\|\partial_{t} \zeta\right\|^{2}\right)+C \delta\left(\|\phi\|^{2}+\left\|\zeta_{1}\right\|^{2}\right)+
{C\delta}.
\end{aligned}
\end{align}
Substitution \eqref{eqj108}-\eqref{eqj111} into \eqref{eqj112} gives \eqref{eqj77}, and the proof of \cref{Lem7} is completed.
\end{proof}

Now, we derive the second order normal derivative estimates of $\phi$.

\begin{Lem}\label{Lem8}
Let $T>0$ be a constant and $(\phi, \zeta) \in X(0, T)$ be the solution of \eqref{eq26} satisfying a priori assumption \eqref{C4} with suitably small $\chi+\delta$, it holds that for $t \in[0, T]$,
\begin{align}\label{eqj11}
\begin{aligned}
& \left\|\partial_{{1}}^{2} \phi(t)\right\|^{2}+\int_{0}^{t}\left(\left\|\partial_{{1}}^{2} \phi\right\|^{2}+\left\|\partial_{{1}}^{3} \zeta_{1}\right\|^{2}\right) d \tau \\
\leq & C
\int_{0}^{t}\left(\left\|\partial_{\tau} \partial_{{1}} \zeta_{1}\right\|^{2}+\left\|\nabla \partial_{{1}} \zeta_{1}\right\|^{2}+\left\|\nabla \partial_{{1}} \partial_{x^{\prime}} \zeta\right\|^{2}\right) d \tau
{{+C\left(E^{2}(0)+M^{2}(t)+\delta\right)}}, \\
\end{aligned}
\end{align}
\begin{align}\label{eqj122}
\begin{aligned}
 \left\|\partial_{{1}} \partial_{x^{\prime}} \phi(t)\right\|^{2}+\int_{0}^{t}\left(\left\|\partial_{{1}} \partial_{x^{\prime}} \phi\right\|^{2}+\left\|\partial_{{1}}^{2} \partial_{x^{\prime}} \zeta_{1}\right\|^{2}\right) d \tau \leq C
 \int_0^t\|\nabla\zeta\|_{H^2}^2d\tau
 {{+C\left(E^{2}(0)+M^{2}(t)+\delta\right).}}
\end{aligned}
\end{align}
\end{Lem}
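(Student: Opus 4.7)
The plan is to replicate the hyperbolic–parabolic argument of \cref{Lem4} one differentiation order higher, exploiting that the rewritten momentum equation $\cref{eqj1}_2$ couples $\partial_1^2\zeta_1$ (with coefficient $-(2\mu+\lambda)$) to $\partial_1\phi$ (with coefficient $p'(\rho)$). For \cref{eqj11}, I would differentiate the continuity equation $\cref{eqj1}_1$ once more in $x_1$ to produce a term $\rho\,\partial_1^3\zeta_1$, which is matched exactly by applying $\partial_1$ to $\cref{eqj1}_2$. Multiplying the resulting continuity equation by $\frac{1}{\rho}\partial_1^2\phi$, the differentiated momentum equation by $\frac{1}{2\mu+\lambda}\partial_1^2\phi$, and adding causes the $\partial_1^3\zeta_1$ contributions to cancel while producing the dissipative term $\frac{p'(\rho)}{2\mu+\lambda}(\partial_1^2\phi)^2$, in complete parallel with \cref{eqj3}.

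Integrating this combined identity over $[0,t]\times\Omega$ generates no surviving boundary contribution: the only integration by parts is on $\mathbf{u}\cdot\nabla\partial_1^2\phi$ multiplied by $\frac{1}{\rho}\partial_1^2\phi$, whose boundary term is proportional to $u_1|_{x_1=0}=0$ by \cref{eq28}. The commutators $[\partial_1^2,\mathbf{u}\cdot\nabla]\phi$ and $[\partial_1^2,\rho]\dv\zeta$, the $\partial_1$-derivative of the transport and coupling terms in the momentum equation, and the sources $\partial_1^2 f$, $\partial_1 g_1$ (with $f,g_1$ given in \cref{eqj101}) are absorbed into the right-hand side of \cref{eqj11} via Cauchy's inequality, Sobolev embedding, \cref{Lem-GN}, and the smallness of $\chi+\delta$. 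The highest order linear pieces $\mu\Delta'\partial_1\zeta_1\cdot\frac{\partial_1^2\phi}{2\mu+\lambda}$ and $(\mu+\lambda)\partial_1^2\nabla'\!\cdot\zeta'\cdot\frac{\partial_1^2\phi}{2\mu+\lambda}$ yield $\eta\,\|\partial_1^2\phi\|^2+C_\eta\,\|\nabla\partial_1\partial_{x'}\zeta\|^2$ after Cauchy, so that choosing $\eta$ small and then solving for $\partial_1^3\zeta_1$ from the $\partial_1$-differentiated momentum equation (exactly as in \cref{eqj9}) delivers the bound \cref{eqj11}.

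The estimate \cref{eqj122} is obtained in the same manner, but with $\partial_1\partial_{x'}$ applied to continuity and $\partial_{x'}$ applied to the rewritten momentum equation, yielding the dissipation $\frac{p'(\rho)}{2\mu+\lambda}(\partial_1\partial_{x'}\phi)^2$ after multiplication and addition. The boundary term from the transport part again vanishes because $u_1|_{x_1=0}=0$; any contribution from $\partial_{x'}\zeta_1|_{x_1=0}=-A(t)\,\partial_{x'}[\rho(t,0,x')^{-1}]$ is reduced to a quantity of size $C\delta$ using $|A(t)|\le C\delta$ and a Sobolev trace estimate, in the same manner as $I_9$ in \cref{Lem2} and $J_7$ in \cref{Lem6} were controlled. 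All remaining cross and commutator terms go into $C\!\int_0^t\|\nabla\zeta\|_{H^2}^2\,d\tau+CM^2(t)+C\delta$, and $\|\partial_1^2\partial_{x'}\zeta_1\|^2$ is recovered from the $\partial_{x'}$-differentiated momentum equation as in \cref{eqj38}.

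The main obstacle is the careful bookkeeping of the commutator and lower-order boundary terms at this derivative order. For example, $[\partial_1^2,\rho]\dv\zeta=\partial_1^2\phi\,\dv\zeta+\partial_1^2\bar\rho\,\dv\zeta+2\,\partial_1(\phi+\bar\rho)\,\partial_1\dv\zeta$ must be split so that the profile piece uses the decay $|\partial_1^k\bar\rho|\le C\delta|(\bar u_1)'|$ from \cref{lem1}, which is integrable in time against $\delta$, while the quadratic-in-perturbation pieces are absorbed into $CM^2(t)$ through Gagliardo–Nirenberg under the smallness of $\chi+\delta$. Once this balancing is carried out, the two bounds follow in the stated form.
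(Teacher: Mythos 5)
Your proposal is correct and follows the same route as the paper: apply $\p_1^2$ to the continuity equation and $\p_1$ to the rewritten momentum equation, multiply by $\frac{1}{\rho}\p_1^2\phi$ and $\frac{1}{2\mu+\lambda}\p_1^2\phi$ respectively, exploit cancellation of the $\p_1^3\zeta_1$ terms, observe that the only boundary term (from the transport convection) vanishes since $u_1|_{x_1=0}=0$, estimate the commutators via the split into $\bar\rho$-profile pieces and perturbation pieces, and finally recover $\|\p_1^3\zeta_1\|^2$ from the differentiated momentum equation; the same scheme at mixed order $\p_1\p_{x'}$ gives \cref{eqj122}. One small over-caution: since the viscous terms are multiplied by the multiplier without integration by parts (unlike in \cref{Lem5}), the \cref{eqj122} identity actually generates no boundary integral involving $\partial_{x'}\zeta_1|_{x_1=0}$ at all, so the trace estimate you invoke there is unnecessary, though it does no harm.
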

\begin{proof}
 Applying $\partial_{{1}}^{2}$ to \eqref{eq12}$_1$ and $\partial_{{1}}$ to \eqref{eq12}$_2$, one has
\begin{align}\label{eqj12}
\left\{\begin{aligned}
&\partial_{t} \partial_{{1}}^{2} \phi+{\bf u} \cdot \nabla \partial_{{1}}^{2} \phi+\rho \partial_{{1}}^{3} \zeta_{1}+\rho \partial_{{1}}^{2} \nabla^{\prime} \cdot \zeta^{\prime}=\partial_{{1}}^{2} f-\left[\partial_{{1}}^{2}, {\bf u} \cdot \nabla\right] \phi-\left[\partial_{{1}}^{2}, \rho\right] \operatorname{div} \zeta, \\
&\rho\left(\partial_{t} \partial_{{1}} \zeta_{1}+{\bf u} \cdot \nabla \partial_{{1}} \zeta_{1}\right)+p^{\prime}(\rho) \partial_{{1}}^{2} \phi-(2 \mu+\lambda) \partial_{{1}}^{3} \zeta_{1}-\mu \partial_{{1}} \Delta^{\prime} \zeta_{1}-(\mu+\lambda) \partial_{{1}}^{2} \nabla^{\prime} \cdot \zeta^{\prime}\\
&\qquad=\partial_{{1}} g_{1}-\partial_{{1}} \rho \partial_{t} \zeta_{1}-\partial_{{1}}(\rho {\bf u}) \cdot \nabla \zeta_{1} -p^{\prime \prime}(\rho) \partial_{{1}} \rho \partial_{{1}} \phi.
\end{aligned}\right.
\end{align}
Similar to lemma \ref{Lem4}, multiplying \eqref{eqj12}$_{1}$ by $\frac{1}{\rho} \partial_{{1}}^{2} \phi$ and \eqref{eqj12}$_2$ by $\frac{1}{2 \mu+\lambda} \partial_{{1}}^{2} \phi$, adding the resulted equations together and then integrating it with respect to $t, x$ over $(0, t) \times \Omega$, it yields
\begin{align}\label{eqj13}
\begin{aligned}
& \left.\int \frac{\left(\partial_{{1}}^{2} \phi\right)^{2}}{2\rho} d x\right|_{\tau=0} ^{\tau=t}+\int_{0}^{t} \int \frac{p^{\prime}(\rho)}{2 \mu+\lambda}\left(\partial_{{1}}^{2} \phi\right)^{2} d x d \tau \\
= & \int_{0}^{t} \int\left(\mu\left(\partial_{{1}} \Delta^{\prime} \zeta_{1}-\partial_{{1}}^{2} \nabla^{\prime} \cdot \zeta^{\prime}\right)-\rho\left(\partial_{\tau} \partial_{{1}} \zeta_{1}+{\bf u} \cdot \nabla \partial_{{1}} \zeta_{1}\right)\right) \frac{\partial_{{1}}^{2} \phi}{2 \mu+\lambda} d x d \tau \\
& +\int_{0}^{t} \int\left(\frac{\operatorname{div} {\bf u}}{\rho}\left(\partial_{{1}}^{2} \phi\right)^{2}-\frac{1}{\rho} \partial_{{1}}^{2} \phi\left[\partial_{{1}}^{2}, {\bf u} \cdot \nabla\right] \phi-\frac{1}{\rho} \partial_{{1}}^{2} \phi\left[\partial_{{1}}^{2}, \rho\right] \operatorname{div} \zeta\right) d x d \tau \\
& -\int_{0}^{t} \int\left[\partial_{{1}} \rho \partial_{\tau} \zeta_{1}+\partial_{{1}}(\rho {\bf u}) \cdot \nabla \zeta_{1}+p^{\prime \prime}(\rho) \partial_{{1}} \rho \partial_{{1}} \phi\right] \frac{\partial_{{1}}^{2} \phi}{2 \mu+\lambda} d x d \tau \\
& +\int_{0}^{t} \int\left(\frac{1}{\rho} \partial_{{1}}^{2} f \partial_{{1}}^{2} \phi+\frac{1}{2 \mu+\lambda} \partial_{{1}} g_{1} \partial_{{1}}^{2} \phi\right) d x d \tau:=\sum_{i=8}^{11} J_{i} .
\end{aligned}
\end{align}
By Cauchy's inequality, one has
\begin{align}\label{eqj14}
\begin{aligned}
J_{8} \leq & \frac{1}{16(2 \mu+\lambda)} \int_{0}^{t}\left\|\sqrt{p^{\prime}(\rho)} \partial_{{1}}^{2} \phi\right\|^{2} d \tau +C \int_{0}^{t}\left(\left\|\nabla \partial_{{1}} \partial_{x^{\prime}} \zeta\right\|^{2}+\left\|\partial_{\tau} \partial_{{1}} \zeta_{1}\right\|^{2}+\left\|\nabla \partial_{{1}} \zeta_{1}\right\|^{2}\right) d \tau.
\end{aligned}
\end{align}
Note that
\begin{align}\label{eqj15}
\begin{gathered}
{\left[\partial_{{1}}^{2}, {\bf u} \cdot \nabla\right] \phi=\partial_{{1}}^{2}({\bf u} \cdot \nabla \phi)-{\bf u} \cdot \nabla \partial_{{1}}^{2} \phi=\partial_{{1}}^{2} {\bf u} \cdot \nabla \phi+2\partial_{{1}} {\bf u} \cdot \nabla \partial_{{1}} \phi}, \\
{\left[\partial_{{1}}^{2}, \rho\right] \operatorname{div} \zeta=\partial_{{1}}^{2}(\rho \operatorname{div} \zeta)-\rho \operatorname{div} \partial_{{1}}^{2} \zeta=\partial_{{1}}^{2} \rho \operatorname{div} \zeta+2\partial_{{1}} \rho \operatorname{div} \partial_{{1}} \zeta}.
\end{gathered}
\end{align}
Then it holds
\begin{align}\label{eqj17}
\begin{aligned}
& \frac{\operatorname{div} {\bf u}}{\rho}\left(\partial_{{1}}^{2} \phi\right)^{2}-\frac{1}{\rho} \partial_{{1}}^{2} \phi\left[\partial_{{1}}^{2}, {\bf u} \cdot \nabla\right] \phi-\frac{1}{\rho} \partial_{{1}}^{2} \phi\left[\partial_{{1}}^{2}, \rho\right] \operatorname{div} \zeta \\
= & \frac{1}{\rho} \partial_{{1}}^{2} \phi\left(\operatorname{div} {\bf u} \partial_{{1}}^{2} \phi-\partial_{{1}}^{2} {\bf u} \cdot \nabla \phi-2\partial_{{1}} {\bf u} \cdot \nabla \partial_{{1}} \phi-\partial_{{1}}^{2} \rho \operatorname{div} \zeta-2\partial_{{1}} \rho \operatorname{div} \partial_{{1}} \zeta\right) \\
= & -\frac{1}{\rho} \partial_{{1}}^{2} \phi\left(2\partial_{{1}} \zeta \cdot \nabla \partial_{{1}} \phi+\partial_{{1}}^{2} \zeta \cdot \nabla \phi+2\partial_{{1}} \phi \operatorname{div} \partial_{{1}} \zeta+\p_1^2\phi\dv\zeta-\dv\zeta
\p_1^2\phi\right) \\
& -\frac{1}{\rho} \partial_{{1}}^{2} \phi\left(\partial_{{1}} \bar{\rho} \operatorname{div} \partial_{{1}} \zeta+\partial_{{1}}^{2} \bar{u}_{1} \partial_{{1}}
\phi+\partial_{{1}}^{2} \bar{\rho} \operatorname{div} \zeta+
\p_1\bar u_1\p_1^2
\phi
\right).
\end{aligned}
\end{align}
By Cauchy's inequality and assumption \eqref{C4}, similar to \eqref{eqj71}, it yields
\begin{align}\label{eqj18}
\begin{aligned}
J_{9} \leq & C\int_{0}^{t}\left[\left\|\partial_{{1}} \zeta\right\|_{L^{\infty}}\left\|\nabla \partial_{{1}} \phi\right\|^{2}+\left\|\partial_{{1}}^{2} \phi\right\|\left\|\nabla^{2} \zeta\right\|_{L^{6}}\|\nabla \phi\|_{L^{3}}+\delta\left\|\partial_{{1}}^{2} \phi\right\|\left(\left\|\nabla^{2} \zeta\right\|+\|\nabla(\phi, \zeta)\|\right)\right] d \tau \\
\leq & C(\chi+\delta) \int_{0}^{t}\left(\left\|\nabla \partial_{{1}} \phi\right\|^{2}+\left\|\nabla^{2} \zeta\right\|_{H^{1}}^{2}+\|\nabla(\phi, \zeta)\|^{2}\right) d \tau \\
\leq &C(\chi+\delta) \int_{0}^{t}\left(\left\|\nabla \phi\right\|_{H^1}^{2}+\left\|\nabla \zeta\right\|_{H^{2}}^{2}\right) d \tau.
\end{aligned}
\end{align}
Similar to \eqref{eqj18}, 
from Cauchy's inequality and assumption \eqref{C4}, we know that
\begin{align}\label{eqj19}
J_{10} \leq C(\chi+\delta) \int_{0}^{t}\left(\left\|\partial_{{1}} \phi\right\|_{H^{1}}^{2}+\left\|\partial_{\tau} \zeta\right\|_{H^{1}}^{2}+\|\nabla \zeta\|_{H^{1}}^{2}\right) d \tau. 
\end{align}
By Cauchy's inequality and Lemma \ref{lem1}, one has
\begin{align}\label{eqj20}
J_{11} \leq C \delta \int_{0}^{t}\left(\left\|\partial_{{1}}^{2}\left(\phi, \zeta_{1}\right)\right\|^{2}+\left\|\partial_{{1}}\left(\phi, \zeta_{1}\right)\right\|^{2}\right) d \tau+{ C\delta}.
\end{align}

Therefore, substituting \eqref{eqj14}, \eqref{eqj18}-\eqref{eqj20} into \eqref{eqj13} and choosing $\chi+\delta$ suitably small, one obtains
\begin{align}\label{eqj21}
\begin{aligned}
& \left\|\partial_{{1}}^{2} \phi(t)\right\|^{2}+\int_{0}^{t}\left\|\partial_{{1}}^{2} \phi\right\|^{2} d \tau \\
&\leq  C\|\p_1^2\phi_0\|^2+C \int_{0}^{t}\left(\left\|\partial_{\tau} \partial_{{1}} \zeta_{1}\right\|^{2}+\left\|\nabla \partial_{{1}} \zeta_{1}\right\|^{2}+\left\|\nabla \partial_{{1}} \partial_{x^{\prime}} \zeta\right\|^{2}\right) d \tau\\
&\qquad+C(\chi+\delta)\int_0^t\left(\|\nabla\zeta\|^2_{H^2}+\|\p_\tau\zeta\|_{H^1}^2\right)d\tau+C\delta\int_0^t\left(\|\p_1^2(\phi,\zeta_1)\|^2+\|\p_1(\phi,\zeta_1)\|^2\right)d\tau\\
&\le C\|\p_1\phi_0\|^2_{H^1}+C \int_{0}^{t}\left(\left\|\partial_{\tau} \partial_{{1}} \zeta_{1}\right\|^{2}+\left\|\nabla \partial_{{1}} \zeta_{1}\right\|^{2}+\left\|\nabla \partial_{{1}} \partial_{x^{\prime}} \zeta\right\|^{2}\right) d \tau\\
&\qquad+C(\chi+\delta)\int_0^t\left(\|\nabla\zeta\|^2_{H^2}+\|\nabla\phi\|^2+\|\p_\tau\zeta\|_{H^1}^2\right)d\tau+C\delta\int_0^t
\|(\phi,\p_1\Phi,\p_{x_2}\Psi)\|^2d\tau .
\end{aligned}
\end{align}
It follows from \eqref{eqj12}$_{2}$ that
\begin{align}\label{eqj22}
\begin{aligned}
 \int_{0}^{t}\left\|\partial_{{1}}^{3} \zeta_{1}\right\|^{2}
\leq  C \int_{0}^{t}\left(\left\|\partial_{{1}}^{2} \phi\right\|^{2}+\left\|\partial_{\tau} \partial_{{1}} \zeta\right\|^{2}+\left\|\nabla \partial_{{1}} \zeta\right\|^{2}+\left\|\nabla \partial_{{1}} \partial_{x^{\prime}} \zeta\right\|^{2}\right) d \tau+C\delta.
\end{aligned}
\end{align}
Then Multiplying \eqref{eqj21} by a large constant $C$ and combining with \eqref{eqj22}, we have \eqref{eqj11}.

Next, applying $\partial_{x^{\prime}}$ to \eqref{eqj1}, it yields
\begin{align}\label{eqj23}
\left\{\begin{aligned}
&\partial_{t} \partial_{{1}} \partial_{x^{\prime}} \phi+{\bf u} \cdot \nabla \partial_{{1}} \partial_{x^{\prime}} \phi+\rho \partial_{{1}}^{2} \partial_{x^{\prime}} \zeta_{1}+\rho \partial_{{1}} \partial_{x^{\prime}} \nabla^{\prime} \cdot \zeta^{\prime} =\partial_{{1}} \partial_{x^{\prime}} f-\left[\partial_{{1}} \partial_{x^{\prime}}, {\bf u} \cdot \nabla\right] \phi-\left[\partial_{{1}} \partial_{x^{\prime}}, \rho\right] \operatorname{div} \zeta, \\
&\rho\left(\partial_{t} \partial_{x^{\prime}} \zeta_{1}+{\bf u} \cdot \nabla \partial_{x^{\prime}} \zeta_{1}\right)+p^{\prime}(\rho) \partial_{{1}} \partial_{x^{\prime}} \phi-(2 \mu+\lambda) \partial_{{1}}^{2} \partial_{x^{\prime}} \zeta_{1}-\mu \partial_{x^{\prime}} \Delta^{\prime} \zeta_{1}-(\mu+\lambda) \partial_{{1}} \partial_{x^{\prime}} \nabla^{\prime} \cdot \zeta^{\prime}  \\
&\quad=\partial_{x^{\prime}} g_{1}-\partial_{x^{\prime}} \rho \partial_{t} \zeta_{1}-\partial_{x^{\prime}}(\rho {\bf u}) \cdot \nabla \zeta_{1}-p^{\prime \prime}(\rho) \partial_{x^{\prime}} \rho \partial_{{1}} \phi.
\end{aligned}\right.
\end{align}
Multiplying \eqref{eqj23}$_{1}$ by $\frac{1}{\rho} \partial_{{1}} \partial_{x^{\prime}} \phi$ and \eqref{eqj23}$_2$ by $\frac{1}{2 \mu+\lambda} \partial_{{1}} \partial_{x^{\prime}} \phi$, then adding the resulted equations together and integrating it with respect to $t, x$ over $(0, t) \times \Omega$, similar to \eqref{eqj21}, it yields
\begin{align}\label{eqj24}
\begin{aligned}
& \left\|\partial_{{1}} \partial_{x^{\prime}} \phi(t)\right\|^{2}+\int_{0}^{t}\left\|\partial_{{1}} \partial_{x^{\prime}} \phi\right\|^{2} d \tau \\
\leq & C\left(\left\|\partial_{{1}} \partial_{x^{\prime}} \phi_0\right\|^{2}+\|\p_1\phi_0\|^2\right)+C \int_{0}^{t}\left(\left\|\partial_{\tau} \partial_{x^{\prime}} \zeta_{1}\right\|^{2}+\left\|\nabla \partial_{x^{\prime}} \zeta_{1}\right\|^{2}+\left\|\nabla \partial_{x^{\prime}}^{2} \zeta\right\|^{2}\right) d \tau\\
&\qquad+C(\chi+\delta)\int_0^t\left(\|\nabla\zeta\|^2_{H^2}+\|\nabla\phi\|^2+\|\p_\tau\zeta\|_{H^1}^2\right)d\tau+C\delta\int_0^t
\|(\phi,\p_1\Phi,\p_{x_2}\Psi)\|^2d\tau. 
\end{aligned}
\end{align}
From \eqref{eqj23}$_{2}$, it is direct to know that
\begin{align}\label{eqj25}
\begin{aligned}
 \int_{0}^{t}\left\|\partial_{{1}}^{2} \partial_{x^{\prime}} \zeta_{1}\right\|^{2} d \tau
\leq  C \int_{0}^{t}\left(\left\|\partial_{{1}} \partial_{x^{\prime}} \phi\right\|^{2}+\left\|\partial_{\tau} \partial_{x^{\prime}} \zeta_{1}\right\|^{2}+\left\|\nabla \partial_{x^{\prime}} \zeta_{1}\right\|^{2}+\left\|\nabla \partial_{x^{\prime}}^{2} \zeta\right\|^{2}\right) d \tau+C\delta.
\end{aligned}
\end{align}

Multiplying \eqref{eqj24} by a large positive constant $C$, combining with \eqref{eqj25}, and 
by {Lemma \ref{Lem2} and Lemma \ref{Lem6}}, we have \eqref{eqj122}. Hence the proof of \cref{Lem8} is completed.
\end{proof}

To close the a priori assumption \eqref{C4}, we need to derive the higher order tangential derivative estimates of $\phi$.

\begin{Lem}\label{Lem9}
 Let $T>0$ be a constant and $(\phi, \zeta) \in X(0, T)$ be the solution of \eqref{eq26} satisfying a priori assumption \eqref{C4} with suitably small $\chi+\delta$, it holds that for $t \in[0, T]$,
\begin{align}\label{eqj40}
\begin{aligned}
& \int_{0}^{t}\left(\left\|\partial_{x^{\prime}}^{2} \phi\right\|^{2}+\left\|\partial_{{1}}^{2} \partial_{x^{\prime}} \zeta^{\prime}\right\|^{2}+\left\|\partial_{{1}}^{3} \zeta^{\prime}\right\|^{2}\right) d \tau \\
\leq & \int_{0}^{t}\left(\left\|\partial_{x^{\prime}} \phi\right\|^{2}+\left\|\nabla \partial_{\tau} \zeta^{\prime}\right\|^{2}+\left\|\nabla^{2} \zeta^{\prime}\right\|^{2}\right) d \tau{+C\left(E^{2}(0)+M^{2}(t)+\delta
\right)}.
\end{aligned}
\end{align}
\end{Lem}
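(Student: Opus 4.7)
The plan is to adapt the proof of \cref{Lem5} one tangential derivative higher. Writing $\p_{x'}$ for either $\p_{x_2}$ or $\p_{x_3}$, I apply $\p_{x'}$ to the tangential momentum equations in \cref{eq12}, which gives, for $i=2,3$,
\begin{align*}
\rho\bigl(\p_t\p_{x'}\zeta_i+{\bf u}\cdot\nabla\p_{x'}\zeta_i\bigr)+p'(\rho)\p_{x_i}\p_{x'}\phi-\mu\p_1^2\p_{x'}\zeta_i-\mu\Delta'\p_{x'}\zeta_i-(\mu+\lambda)\p_{x_i}\p_{x'}\dv\zeta=\mathcal R_i,
\end{align*}
where $\mathcal R_i$ collects the commutators $[\p_{x'},\rho\,{\bf u}\cdot\nabla]\zeta_i$, $[\p_{x'},p'(\rho)]\p_{x_i}\phi$, and the derivative $\p_{x'}g_i$ of the inhomogeneous shock term. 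Multiplying by $\p_{x_i}\p_{x'}\phi$, integrating over $[0,t]\times\Omega$, and summing over $i=2,3$ and over the two admissible choices of $\p_{x'}$, the pressure contribution reproduces $\int_0^t\!\int p'(\rho)|\p_{x'}^2\phi|^2\,dxd\tau$ and hence the dissipative quantity $\int_0^t\|\p_{x'}^2\phi\|^2 d\tau$ on the left-hand side.

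The critical step, following \eqref{eqj32}--\eqref{eqj33}, is the treatment of $-\mu\int_0^t\!\int\p_1^2\p_{x'}\zeta_i\,\p_{x_i}\p_{x'}\phi\,dxd\tau$. Integration by parts in the tangential direction $x_i$ (no boundary, since $x'\in\mathbb T^2$) recasts this as $\mu\int_0^t\!\int\p_1^2\p_{x_i}\p_{x'}\zeta_i\,\p_{x'}\phi\,dxd\tau$, and one further integration by parts in $x_1$ yields the boundary integral $-\mu\int_0^t\!\int_{\mathbb T^2}\p_1\p_{x_i}\p_{x'}\zeta_i\,\p_{x'}\phi\big|_{x_1=0}\,dx_2dx_3d\tau$ plus the bulk term $\mu\int_0^t\!\int\p_1\p_{x_i}\p_{x'}\zeta_i\,\p_1\p_{x'}\phi\,dxd\tau$. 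The boundary piece is handled via the Navier condition \cref{eq28}: the identity $\p_1\zeta_i|_{x_1=0}=\zeta_i/k(x')$ converts $\p_1\p_{x_i}\p_{x'}\zeta_i|_{x_1=0}$ into $\p_{x_i}\p_{x'}(\zeta_i/k(x'))|_{x_1=0}$, whose $L^2(\mathbb T^2)$ norm is dominated by $\|\p_{x'}\zeta^{\prime}|_{x_1=0}\|_{L^2(\mathbb T^2)}$ already estimated in \cref{Lem6}; combined with the one-dimensional trace inequality $\|\p_{x'}\phi\|_{L^\infty(\mathbb R^+)}\le\|\p_{x'}\phi\|_{L^2(\mathbb R^+)}^{1/2}\|\p_1\p_{x'}\phi\|_{L^2(\mathbb R^+)}^{1/2}$, as in \eqref{eqj33}, this contribution is bounded by a small multiple of $\int_0^t\|\sqrt{p'(\rho)}\p_{x'}^2\phi\|^2 d\tau$ plus $\int_0^t\|\p_{x'}\phi\|^2 d\tau$ and $\int_0^t\|\p_1\p_{x'}\phi\|^2 d\tau$. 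The bulk term is absorbed by Cauchy's inequality into $\eta\int_0^t\|\p_1\p_{x'}\phi\|^2 d\tau+C_\eta\int_0^t\|\nabla\p_{x'}^2\zeta\|^2 d\tau$; the first piece is controlled via \cref{Lem8} \eqref{eqj122} and ultimately sits inside the $M^2(t)$ factor, while the second lies inside the estimate of \cref{Lem6} \eqref{eqj74} and similarly inside $M^2(t)$.

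The remaining terms are routine. The tangential viscous contributions $\mu\Delta'\p_{x'}\zeta_i$ and $(\mu+\lambda)\p_{x_i}\p_{x'}\dv\zeta$ pair with $\p_{x_i}\p_{x'}\phi$ and produce $\|\nabla\p_{x'}^2\zeta\|^2$ after tangential integration by parts; the material derivative $\rho(\p_t\p_{x'}\zeta_i+{\bf u}\cdot\nabla\p_{x'}\zeta_i)\cdot\p_{x_i}\p_{x'}\phi$ yields $\|\nabla\p_\tau\zeta^{\prime}\|^2$ and lower-order tangential pieces already present on the right of \eqref{eqj40}; and the residuals $\mathcal R_i$ give only $M^2(t)+\delta$ via Sobolev embedding, the a priori assumption \cref{C4}, and $|\p_1\bar u_1|+|\p_1\bar\rho|\le C\delta$ from \cref{lem1}, mirroring the estimates of $I_6$--$I_8$ in \cref{Lem2}. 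To recover $\int_0^t\|\p_1^2\p_{x'}\zeta^{\prime}\|^2 d\tau$ and $\int_0^t\|\p_1^3\zeta^{\prime}\|^2 d\tau$, I then solve the $\p_{x'}$- and $\p_1$-differentiated tangential momentum equations algebraically for $\mu\p_1^2\p_{x'}\zeta_i$ and $\mu\p_1^3\zeta_i$, square, and integrate; the right-hand sides involve only $\p_{x'}^2\phi$ (just obtained), $\p_1\p_{x'}\phi$ (from \cref{Lem8}), $\nabla\p_\tau\zeta^{\prime}$, $\nabla^2\zeta^{\prime}$, and errors absorbable into $M^2(t)+\delta$, exactly matching the right-hand side of \eqref{eqj40}.

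The main obstacle, as throughout this section, is the boundary integral at $x_1=0$ arising from integration by parts in $x_1$: without the Navier condition, the normal-derivative trace $\p_1\zeta^{\prime}|_{x_1=0}$ would be out of control at this order. The identity $\p_1\zeta_i|_{x_1=0}=\zeta_i/k(x')$ supplied by \cref{eq28} is precisely what allows the normal trace of $\zeta^{\prime}$ to be replaced by a purely tangential trace, which \cref{Lem6} has already estimated, thereby closing the argument.
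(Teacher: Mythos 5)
Your proof follows the same route as the paper's: apply $\partial_{x'}$ to the tangential momentum equations \eqref{eqj37}, multiply by $\partial_{x_i}\partial_{x'}\phi$, integrate by parts twice (tangentially and then in $x_1$) to expose the boundary term, control it via the Navier condition and the one-dimensional trace inequality, then recover $\|\partial_1^2\partial_{x'}\zeta'\|^2$ and $\|\partial_1^3\zeta'\|^2$ algebraically from the $\partial_{x'}$- and $\partial_1$-differentiated equations. One small slip: after applying $\partial_{x_i}\partial_{x'}$ to the identity $\partial_1\zeta_i|_{x_1=0}=\zeta_i/k(x')$, the resulting trace $\partial_{x_i}\partial_{x'}(\zeta_i/k(x'))|_{x_1=0}$ is dominated in $L^2(\mathbb T^2)$ by the \emph{second}-order tangential trace $\|\partial_{x'}^2\zeta'|_{x_1=0}\|_{L^2(\mathbb T^2)}$ (plus lower-order traces), not by $\|\partial_{x'}\zeta'|_{x_1=0}\|_{L^2(\mathbb T^2)}$ as written; the cited \cref{Lem6} indeed controls this second-order trace via \eqref{eqj74}, so the argument closes, but the order of the trace should be corrected.
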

\begin{proof}
Applying $\partial_{x^{\prime}}$ to \eqref{eqj37} yields
\begin{align}\label{eqj41}
\begin{aligned}
& \rho\left(\partial_{t} \partial_{x^{\prime}} \zeta_{2}+{\bf u} \cdot \nabla \partial_{x^{\prime}} \zeta_{2}\right)+p^{\prime}(\rho) \partial_{x_{2}} \partial_{x^{\prime}} \phi+\partial_{x^{\prime}} \rho \partial_{t} \zeta_{2}+\partial_{x^{\prime}}(\rho {\bf u}) \cdot \nabla \zeta_{2} \\
& \quad+p^{\prime \prime}(\rho) \partial_{x^{\prime}} \rho \partial_{x_{2}} \phi-\mu \partial_{{1}}^{2} \partial_{x^{\prime}} \zeta_{2}-\mu \Delta^{\prime} \partial_{x^{\prime}} \zeta_{2}-(\mu+\lambda) \partial_{x_{2}} \partial_{x^{\prime}} \operatorname{div} \zeta=0.
\end{aligned}
\end{align}
Multiplying the above equation by $\partial_{x_{2}} \partial_{x^{\prime}} \phi$ and  integrating the resulted equation with respect to $t, x$ over $(0, t) \times \Omega$, one has
\begin{align}\label{eqj42}
\begin{aligned}
 \int_{0}^{t} \int p^{\prime}(\rho)&\left(\partial_{x_{2}} \partial_{x^{\prime}} \phi\right)^{2} d x d \tau
=  \int_{0}^{t} \int \mu \partial_{{1}}^{2} \partial_{x^{\prime}} \zeta_{2} \partial_{x_{2}} \partial_{x^{\prime}} \phi d x d \tau \\
& +\int_{0}^{t} \int\left(\mu \Delta^{\prime} \partial_{x^{\prime}} \zeta_{2}+(\mu+\lambda) \partial_{x_{2}} \partial_{x^{\prime}} \operatorname{div} \zeta-\rho\left(\partial_{\tau} \partial_{x^{\prime}} \zeta_{2}+{\bf u} \cdot \nabla \partial_{x^{\prime}} \zeta_{2}\right)\right) \partial_{x_{2}} \partial_{x^{\prime}} \phi d x d \tau \\
& -\int_{0}^{t} \int\left(\partial_{x^{\prime}} \rho \partial_{\tau} \zeta_{2}+\partial_{x^{\prime}}(\rho {\bf u}) \cdot \nabla \zeta_{2}+p^{\prime \prime}(\rho) \partial_{x^{\prime}} \rho \partial_{x_{2}} \phi\right) \partial_{x_{2}} \partial_{x^{\prime}} \phi d x d \tau:=\sum_{i=3}^{5}\hat I_i .
\end{aligned}
\end{align}
Integration by parts under the boundary conditions \eqref{eq28}, it yields 
\begin{align}\label{eqj43}
\begin{aligned}
\hat I_3
=&  -\mu \int_{0}^{t} \int \partial_{{1}}^{2} \partial_{x_{2}} \partial_{x^{\prime}} \zeta_{2} \partial_{x^{\prime}} \phi d x d \tau \\
= & \left.\mu \int_{0}^{t} \int_{\mathbb{T}^{2}} \partial_{{1}} \partial_{x_{2}} \partial_{x^{\prime}} \zeta_{2} \partial_{x^{\prime}} \phi\right|_{{x_1}=0} d x_{2} d x_{3} d \tau+\mu \int_{0}^{t} \int \partial_{{1}} \partial_{x_{2}} \partial_{x^{\prime}} \zeta_{2} \p_{1} \partial_{x^{\prime}} \phi d x d \tau \\
= & \left.\mu \int_{0}^{t} \int_{\mathbb{T}^{2}} \partial_{x_{2}} \partial_{x^{\prime}}\left(\frac{\zeta_{2}}{k\left(x^{\prime}\right)}\right) \partial_{x^{\prime}} \phi\right|_{x_{1}=0} d x_{2} d x_{3} d \tau+\mu \int_{0}^{t} \int \partial_{{1}} \partial_{x_{2}} \partial_{x^{\prime}} \zeta_{2} \partial_{{1}} \partial_{x^{\prime}} \phi d x d \tau,
\end{aligned}
\end{align}
where 
\begin{align}\label{eqj44}
\begin{aligned}
& \left.\quad \mu \int_{0}^{t} \int_{\mathbb{T}^{2}} \partial_{x_{2}} \partial_{x^{\prime}}\left(\frac{\zeta_{2}}{k\left(x^{\prime}\right)}\right) \partial_{x^{\prime}} \phi\right|_{x_{1}=0} d x_{2} d x_{3} d \tau \\
& \leq C \int_{0}^{t} \int_{\mathbb{T}^{2}}\left[\left.\left.\left|\partial_{x_{2}} \partial_{x^{\prime}} \zeta_{2}\right|_{x_{1}=0}|+| \partial_{x^{\prime}} \zeta_{2}\right|_{x_{1}=0}|+| \zeta_{2}\right|_{x_{1}=0} \mid\right]\left\|\partial_{x^{\prime}} \phi\right\|_{L^{\infty}(\mathbb R^+)} d x_{2} d x_{3} d \tau \\
& \leq C \int_{0}^{t} \int_{\mathbb{T}^{2}}\left[\left.\left.\left|\partial_{x_{2}} \partial_{x^{\prime}} \zeta_{2}\right|_{x_{1}=0}|+| \partial_{x^{\prime}} \zeta_{2}\right|_{x_{1}=0}|+| \zeta_{2}\right|_{x_{1}=0} \mid\right] \\
&\qquad\cdot\left\|\partial_{x^{\prime}} \phi\right\|_{L^{2}(\mathbb R^+)}^{\frac{1}{2}}|| \partial_{{1}} \partial_{x^{\prime}} \phi \|_{L^{2}(\mathbb R^+)}^{\frac{1}{2}} d x_{2} d x_{3} d \tau \\
& \leq C \int_{0}^{t}\left[\left\|\left.\partial_{x_{2}} \partial_{x^{\prime}} \zeta_{2}\right|_{x_{1}=0}\right\|_{L^{2}\left(\mathbb{T}^{2}\right)}+\left\|\left.\partial_{x^{\prime}} \zeta_{2}\right|_{x_{1}=0}\right\|_{L^{2}\left(\mathbb{T}^{2}\right)}+\left\|\left.\zeta_{2}\right|_{x_{1}=0}\right\|_{L^{2}\left(\mathbb{T}^{2}\right)}\right] \\
&\qquad\cdot\left\|\partial_{x^{\prime}} \phi\right\|_{L^{2}(\Omega)}^{\frac{1}{2}}\left\|\partial_{{1}} \partial_{x^{\prime}} \phi\right\|_{L^{2}(\Omega)}^{\frac{1}{2}} d \tau\\
&\leq C \int_{0}^{t}\left[\left\|\partial_{x^{\prime}} \phi\right\|^{2}+\left\|\partial_{{1}} \partial_{x^{\prime}} \phi\right\|^{2}+\left\|\left.\partial_{x_{2}} \partial_{x^{\prime}} \zeta_{2}\right|_{x_{1}=0}\right\|_{L^{2}\left(\mathbb{T}^{2}\right)}^{2}+\left\|\left.\partial_{x^{\prime}} \zeta_{2}\right|_{x_{1}=0}\right\|_{L^{2}\left(\mathbb{T}^{2}\right)}^{2}\right.\\
&\left.\qquad+\left\|\left.\zeta_{2}\right|_{x_{1}=0}\right\|_{L^{2}\left(\mathbb{T}^{2}\right)}^{2}\right] d \tau.
\end{aligned}
\end{align}
substituting the above inequality into \eqref{eqj43} 
and combining with Cauchy's inequality, one can obtain
\begin{align}\label{eqj45}
\begin{aligned}
\hat I_3\leq & C \int_{0}^{t}\left(\left\|\partial_{x^{\prime}} \phi\right\|^{2}+\left\|\partial_{{1}} \partial_{x^{\prime}} \phi\right\|^{2}+\left\|\partial_{{1}} \partial_{x_{2}} \partial_{x^{\prime}} \zeta_{2}\right\|^{2}+\left\|\left.\partial_{x_{2}} \partial_{x^{\prime}} \zeta_{2}\right|_{x_{1}=0}\right\|_{L^{2}\left(\mathbb{T}^{2}\right)}^{2}\right. \\
& \left.+\left\|\left.\partial_{x^{\prime}} \zeta_{2}\right|_{x_{1}=0}\right\|_{L^{2}\left(\mathbb{T}^{2}\right)}^{2}+\left\|\left.\zeta_{2}\right|_{x_{1}=0}\right\|_{L^{2}\left(\mathbb{T}^{2}\right)}^{2}\right) d \tau .
\end{aligned}
\end{align}
By Cauchy's inequality, it yields
\begin{align}\label{eqj46}
\begin{aligned}
\hat I_4
\leq & \frac{1}{16} \int_{0}^{t}\left\|\sqrt{p^{\prime}(\rho)} \partial_{x_{2}} \partial_{x^{\prime}} \phi\right\|^{2} d \tau+C \int_{0}^{t}\left(\left\|\partial_{\tau} \partial_{x^{\prime}} \zeta_{2}\right\|^{2}+\left\|\nabla \partial_{x^{\prime}} \zeta_{2}\right\|^{2}+\left\|\nabla \partial_{x^{\prime}}^{2} \zeta\right\|^{2}\right) d \tau.
\end{aligned}
\end{align}
Similar to \eqref{eqj14} and \eqref{eqj18}, 
from Sobolev's inequality and the assumption \eqref{C4}, we know 
\begin{align}\label{eqj47}
\begin{aligned}
\hat I_5
\leq & \frac{1}{16} \int_{0}^{t}\left\|\sqrt{p^{\prime}(\rho)} \partial_{x_{2}} \partial_{x^{\prime}} \phi\right\|^{2} d \tau+C \chi \int_{0}^{t}\left(\left\|\partial_{\tau} \zeta_{2}\right\|_{H^{1}}^{2}+\left\|\nabla \zeta_{2}\right\|_{H^{1}}^{2}+\left\|\partial_{x_{2}} \phi\right\|_{H^{1}}^{2}\right) d \tau. 
\end{aligned}
\end{align}
Substituting \eqref{eqj45}-\eqref{eqj47} into \eqref{eqj42}, one has 
\begin{align}\label{eqj48}
\begin{aligned}
 \int_{0}^{t}\left\|\partial_{x_{2}} \partial_{x^{\prime}} \phi\right\|^{2} &d \tau
\leq  C \int_{0}^{t}\left(\left\|\partial_{\tau} \partial_{x^{\prime}} \zeta_{2}\right\|^{2}+\left\|\nabla \partial_{x^{\prime}} \zeta_{2}\right\|^{2}+\left\|\nabla \partial_{x^{\prime}}^{2} \zeta\right\|^{2}\right) d \tau\\
&+C \int_{0}^{t}\left(\left\|\left.\partial_{x_{2}} \partial_{x^{\prime}} \zeta_{2}\right|_{x_{1}=0}\right\|_{L^{2}\left(\mathbb{T}^{2}\right)}^{2}
+\left\|\left.\partial_{x^{\prime}} \zeta_{2}\right|_{x_{1}=0}\right\|_{L^{2}\left(\mathbb{T}^{2}\right)}^{2}+\left\|\left.\zeta_{2}\right|_{x_{1}=0}\right\|_{L^{2}\left(\mathbb{T}^{2}\right)}^{2}\right) d \tau\\
&+C \int_{0}^{t}\left(\left\|\p_{1} \partial_{x^{\prime}} \phi\right\|^{2}+\left\|\partial_{x^{\prime}} \phi\right\|^{2}\right) d \tau  {{+C \chi \int_{0}^{t}\left(\left\|\partial_{\tau} \zeta_{2}\right\|_{H^{1}}^{2}+\left\|\nabla \zeta_{2}\right\|_{H^{1}}^{2}+\left\|\partial_{x_{2}} \phi\right\|_{H^{1}}^{2}\right) d \tau.}}
\end{aligned}
\end{align}
It follows from \eqref{eqj41} that
\begin{align}\label{eqj49}
\begin{aligned}
 \int_{0}^{t}\left\|\partial_{{1}}^{2} \partial_{x^{\prime}} \zeta_{2}\right\|^{2} d \tau
\leq & C \int_{0}^{t}\left(\left\|\partial_{x_{2}} \partial_{x^{\prime}} \phi\right\|^{2}+\left\|\partial_{\tau} \partial_{x^{\prime}} \zeta_{2}\right\|^{2}+\left\|\nabla \partial_{x^{\prime}} \zeta_{2}\right\|^{2}+\left\|\nabla \partial_{x^{\prime}}^{2} \zeta\right\|^{2}\right) d \tau\\
&+C \chi \int_{0}^{t}\left(\left\|\partial_{\tau} \zeta_{2}\right\|^{2}+\left\|\nabla \zeta_{2}\right\|^{2}+\left\|\partial_{x_{2}} \phi\right\|^{2}\right) d \tau.
\end{aligned}
\end{align}
Then multiplying \eqref{eqj48} by a large constant $C$ and combining 
with \eqref{eqj49}, it implies
\begin{align}\label{eqj50}
\begin{aligned}
\int_{0}^{t}&\left(\left\|\partial_{x_{2}} \partial_{x^{\prime}} \phi\right\|^{2}+\left\|\partial_{{1}}^{2} \partial_{x^{\prime}} \zeta_{2}\right\|^{2}\right) d \tau
\leq  C \int_{0}^{t}\left(\left\|\partial_{{1}} \partial_{x^{\prime}} \phi\right\|^{2}+\left\|\partial_{x^{\prime}} \phi\right\|^{2}\right) d \tau\\
&\qquad\qquad+C \chi \int_{0}^{t}\left(\left\|\partial_{\tau} \zeta_{2}\right\|_{H^{1}}^{2}+\left\|\nabla \zeta_{2}\right\|_{H^{1}}^{2}+\left\|\partial_{x_{2}} \phi\right\|_{H^{1}}^{2}\right) d \tau
+C \int_{0}^{t}\left[\left\|\partial_{\tau} \partial_{x^{\prime}} \zeta_{2}\right\|^{2}+\left\|\nabla \partial_{x^{\prime}} \zeta_{2}\right\|^{2}\right.\\
&\qquad\qquad\left.+\left\|\nabla \partial_{x^{\prime}}^{2} \zeta\right\|^{2}+\left\|\left.\partial_{x_{2}} \partial_{x^{\prime}} \zeta_{2}\right|_{x_{1}=0}\right\|_{L^{2}\left(\mathbb{T}^{2}\right)}^{2}+\left\|\left.\partial_{x^{\prime}} \zeta_{2}\right|_{x_{1}=0}\right\|_{L^{2}\left(\mathbb{T}^{2}\right)}^{2}+\left\|\left.\zeta_{2}\right|_{x_{1}=0}\right\|_{L^{2}\left(\mathbb{T}^{2}\right)}^{2}\right] d \tau .
\end{aligned}
\end{align}
Applying $\partial_{{1}}$ to \eqref{eqj37} yields
\begin{align}\label{eqj51}
\begin{aligned}
& \rho\left(\partial_{t} \partial_{{1}} \zeta_{2}+{\bf u} \cdot \nabla \partial_{{1}} \zeta_{2}\right)+p^{\prime}(\rho) \partial_{{1}} \partial_{x_{2}} \phi+\partial_{{1}} \rho \partial_{t} \zeta_{2}+\partial_{{1}}(\rho {\bf u}) \cdot \nabla \zeta_{2}+p^{\prime \prime}(\rho) \partial_{{1}} \rho \partial_{x_{2}} \phi \\
& \quad-\mu \partial_{{1}}^{3} \zeta_{2}-\mu \Delta^{\prime} \partial_{{1}} \zeta_{2}-(\mu+\lambda) \partial_{{1}}^{2} \partial_{x_{2}} \zeta_{1}-(\mu+\lambda) \partial_{{1}} \partial_{x_{2}} \nabla^{\prime} \cdot \zeta^{\prime}=0.
\end{aligned}
\end{align}
By Cauchy's inequality, one has
\begin{align}\label{eqj52}
\begin{aligned}
\int_{0}^{t}\left\|\partial_{{1}}^{3} \zeta_{2}\right\|^{2} d \tau \leq & C \int_{0}^{t}\left(\left\|\partial_{{1}} \partial_{x_{2}} \phi\right\|^{2}+\left\|\partial_{{1}}^{2} \partial_{x_{2}} \zeta_{1}\right\|^{2}+\left\|\partial_{{1}} \partial_{x^{\prime}}^{2} \zeta\right\|^{2}\right) d \tau \\
& +C \int_{0}^{t}\left(\left\|\partial_{\tau} \partial_{{1}} \zeta_{2}\right\|^{2}+\left\|\nabla \partial_{{1}} \zeta_{2}\right\|^{2}\right) d \tau+C \chi \int_{0}^{t}\left(\left\|\partial_{\tau} \zeta_{2}\right\|^{2}+\left\|\nabla \zeta_{2}\right\|^{2}+\left\|\partial_{x_{2}} \phi\right\|^{2}\right) d \tau,
\end{aligned}
\end{align}
which together with \eqref{eqj50} leads to
\begin{align}\label{eqj53}
\begin{aligned}
& \int_{0}^{t}\left(\left\|\partial_{x_{2}} \partial_{x^{\prime}} \phi\right\|^{2}+\left\|\p_{1}^{2} \partial_{x^{\prime}} \zeta_{2}\right\|^{2}+\left\|\p_{1}^{3} \zeta_{2}\right\|^{2}\right) d \tau \\
\leq & C \int_{0}^{t}\left(\left\|\p_{1} \partial_{x^{\prime}} \phi\right\|^{2}+\left\|\p_{1}^{2} \partial_{x_{2}} \zeta_{1}\right\|^{2}+\left\|\partial_{x^{\prime}} \phi\right\|^{2}\right) d \tau +C \chi \int_{0}^{t}\left(\left\|\partial_{\tau} \zeta_{2}\right\|_{H^{1}}^{2}+\left\|\nabla \zeta_{2}\right\|_{H^{1}}^{2}+\left\|\partial_{x_{2}} \phi\right\|_{H^{1}}^{2}\right) d \tau
 \\
& +C \int_{0}^{t}\left(\left\|\nabla \partial_{\tau} \zeta_{2}\right\|^{2}+\left\|\nabla^{2} \zeta_{2}\right\|^{2}+\left\|\nabla \partial_{x^{\prime}}^{2} \zeta\right\|^{2}+\left\|\left.\partial_{x_{2}} \partial_{x^{\prime}} \zeta_{2}\right|_{x_{1}=0}\right\|_{L^{2}\left(\mathbb{T}^{2}\right)}^{2}\right. \\
& \left.+\left\|\left.\partial_{x^{\prime}} \zeta_{2}\right|_{x_{1}=0}\right\|_{L^{2}\left(\mathbb{T}^{2}\right)}^{2}+\left\|\left.\zeta_{2}\right|_{x_{1}=0}\right\|_{L^{2}\left(\mathbb{T}^{2}\right)}^{2}\right) d \tau \\
\leq & C \int_{0}^{t}\left(\left\|\partial_{x^{\prime}} \phi\right\|^{2}+\left\|\nabla \partial_{\tau} \zeta_{2}\right\|^{2}+\left\|\nabla^{2} \zeta_{2}\right\|^{2}\right) d \tau{+C\left(E^{2}(0)+M^{2}(t)+\delta
\right)},
\end{aligned}
\end{align}
where the last inequality be obtained by using {\eqref{C27}, \eqref{lem2-1}, \eqref{eqj74} and \eqref{eqj122}}. Note that the estimates of $\int_{0}^{t}\left(\left\|\partial_{x_{3}} \partial_{x^{\prime}} \phi\right\|^{2}+\left\|\partial_{{1}}^{2} \partial_{x^{\prime}} \zeta_{3}\right\|^{2}+\left\|\partial_{{1}}^{3} \zeta_{3}\right\|^{2}\right) d \tau$ can be got similarly as \eqref{eqj53}. Therefore, we can derive \eqref{eqj40} and complete the proof of \cref{Lem9}.
\end{proof}

Now we prove Proposition \ref{Cproposition} by combining with the above all lemmas.

Proof of Proposition \ref{Cproposition}: Combining \eqref{eqj0} and \eqref{eqj30} together, and choosing $\eta$ suitably small, one can obtain
$$
\begin{aligned}
& \left\|\p_{1} \phi(t)\right\|^{2}+\int_{0}^{t}\left(\|\nabla \phi\|^{2}+\left\|\p_{1}^{2} \zeta\right\|^{2}\right) d \tau \\
& \leq C\left(M^{2}(t)+E^{2}(0)+\delta\right) +C \int_{0}^{t}\left(\left\|\partial_{\tau} \zeta\right\|^{2}+\|\nabla \zeta\|^{2}+\left\|\nabla \partial_{x^{\prime}} \zeta\right\|^{2}+\left\|\p_{1} \partial_{x^{\prime}} \phi\right\|^{2}+\left\|\left.\zeta^{\prime}\right|_{x_{1}=0}\right\|_{L^{2}\left(\mathbb{T}^{2}\right)}^{2}\right) d \tau,
\end{aligned}
$$
which together with \eqref{lem2-1} and \eqref{lem2-2} leads to 
$$
\begin{aligned}
& \left\|\partial_{t}(\phi, \zeta)(t)\right\|^{2}+\|\nabla \phi(t)\|^{2}+\int_{0}^{t}\left(\left\|\nabla \partial_{\tau} \zeta\right\|^{2}+\|\nabla \phi\|^{2}+\left\|\nabla^{2} \zeta\right\|^{2}\right) d \tau \\
& \quad+\int_{0}^{t}\left(\left\|\left.\partial_{\tau} \zeta^{\prime}\right|_{x_{1}=0}\right\|_{L^{2}\left(\mathbb{T}^{2}\right)}^{2}+\left\|\left.\partial_{x^{\prime}} \zeta^{\prime}\right|_{x_{1}=0}\right\|_{L^{2}\left(\mathbb{T}^{2}\right)}^{2}\right) d \tau \\
& \leq C\left(E^{2}(0)+M^{2}(t)+\delta\right)+C \int_{0}^{t}\left(\left\|\partial_{\tau} \zeta\right\|^{2}+\|\nabla \zeta\|^{2}+\left\|\p_{1} \partial_{x^{\prime}} \phi\right\|^{2}+\left\|\left.\zeta^{\prime}\right|_{x_{1}=0}\right\|_{L^{2}\left(\mathbb{T}^{2}\right)}^{2}\right) d \tau .
\end{aligned}
$$
Then multiplying \eqref{eqj70} by a large positive constant $C$,  combining it with the above inequality 
and choosing $\eta$ suitably small, one has
\begin{align}\label{eqj80}
\begin{aligned}
& \quad\left\|\partial_{t}(\phi, \zeta)(t)\right\|^{2}+\|\nabla(\phi, \zeta)(t)\|^{2}+\int_{0}^{t}\left(\left\|\partial_{\tau} \zeta\right\|_{H^{1}}^{2}+\|\nabla \phi\|^{2}+\left\|\nabla^{2} \zeta\right\|^{2}\right) d \tau \\
& \quad+\left\|\left.\zeta^{\prime}(t)\right|_{x_{1}=0}\right\|_{L^{2}\left(\mathbb{T}^{2}\right)}^{2}+\int_{0}^{t}\left(\left\|\left.\partial_{\tau} \zeta^{\prime}\right|_{x_{1}=0}\right\|_{L^{2}\left(\mathbb{T}^{2}\right)}^{2}+\left\|\left.\partial_{x^{\prime}} \zeta^{\prime}\right|_{x_{1}=0}\right\|_{L^{2}\left(\mathbb{T}^{2}\right)}^{2}\right) d \tau \\
& \leq C\|\phi(t)\|^{2} +C \int_{0}^{t}\left(\left\|\p_{1} \partial_{x^{\prime}} \phi\right\|^{2}+\|\nabla \zeta\|^{2}+\left\|\left.\zeta^{\prime}\right|_{x_{1}=0}\right\|_{L^{2}\left(\mathbb{T}^{2}\right)}^{2}\right) d \tau+C\left(E^{2}(0)+M^{2}(t)+\delta\right) .
\end{aligned}
\end{align}
By \eqref{eq26}, one has
\begin{align}\label{eqj81}
\int_{0}^{t}\left\|\partial_{\tau} \phi\right\|^{2} d \tau \leq C \int_{0}^{t}\|\nabla(\phi, \zeta)\|^{2} d \tau+C \delta \int_{0}^{t}\left\| \left(\phi, \zeta_{1}\right)\right\|^{2} d \tau.
\end{align}
Hence, multiplying \eqref{eqj80} by a large constant $C$ and adding \eqref{eqj81} together, one has
\begin{align}\label{eqj84}
\begin{aligned}
& \left\|\partial_{t}(\phi, \zeta)(t)\right\|^{2}+\|\nabla(\phi, \zeta)(t)\|^{2}+\int_{0}^{t}\left(\left\|\partial_{\tau} \zeta\right\|_{H^{1}}^{2}+\left\|\partial_{\tau} \phi\right\|^{2}+\|\nabla \phi\|^{2}+\left\|\nabla^{2} \zeta\right\|^{2}\right) d \tau \\
& \quad+\left\|\left.\zeta^{\prime}(t)\right|_{x_{1}=0}\right\|_{L^{2}\left(\mathbb{T}^{2}\right)}^{2}+\int_{0}^{t}\left(\left\|\left.\partial_{\tau} \zeta^{\prime}\right|_{x_{1}=0}\right\|_{L^{2}\left(\mathbb{T}^{2}\right)}^{2}+\left\|\left.\partial_{x^{\prime}} \zeta^{\prime}\right|_{x_{1}=0}\right\|_{L^{2}\left(\mathbb{T}^{2}\right)}^{2}\right) d \tau \\
& \leq C\|\phi(t)\|^{2}+C \int_{0}^{t}\left(\left\|\p_{1} \partial_{x^{\prime}} \phi\right\|^{2}+\|\nabla \zeta\|^{2}+\left\|\left.\zeta^{\prime}\right|_{x_{1}=0}\right\|_{L^{2}\left(\mathbb{T}^{2}\right)}^{2}\right) d \tau+C\left(E^{2}(0)+M^{2}(t)+\delta\right) \\
& \leq C\|\phi(t)\|^{2}+C \int_{0}^{t}\left(\|\nabla \zeta\|^{2}+\left\|\left.\zeta^{\prime}\right|_{x_{1}=0}\right\|_{L^{2}\left(\mathbb{T}^{2}\right)}^{2}\right) d \tau+C\left(E^{2}(0)+M^{2}(t)+\delta\right), 
\end{aligned}
\end{align}
where \eqref{eqj122} be used in the last inequality. 

Next, we deal with the higher order derivative estimates. The combination of \eqref{eqj74}, \eqref{eqj11} and \eqref{eqj122} yields

$$
\begin{aligned}
& \left\|\nabla^{2} \phi(t)\right\|^{2}+\int_{0}^{t}\left(\left\|\nabla \p_{1} \phi\right\|^{2}+\left\|\p_{1}^{3} \zeta_{1}\right\|^{2}+\left\|\p_{1}^{2} \partial_{x^{\prime}} \zeta_{1}\right\|^{2}
+\left\|\nabla \partial_{x^{\prime}}^{2} \zeta\right\|^{2}+\left\|\left.\partial_{x^{\prime}}^{2} \zeta^{\prime}\right|_{x_{1}=0}\right\|_{L^{2}\left(\mathbb{T}^{2}\right)}^{2}\right) d \tau \\
& \leq C \int_{0}^{t}\left(\left\|\partial_{\tau} \p_{1} \zeta_{1}\right\|^{2}+\left\|\nabla \p_{1} \zeta_{1}\right\|^{2}+\left\|\p_{1}^{2} \partial_{x^{\prime}} \zeta^{\prime}\right\|^{2}\right) d \tau+C\left(E^{2}(0)+M^{2}(t)+\delta\right).
\end{aligned}
$$
Multiplying \eqref{eqj40} by a large constant $C$ and adding the resulted inequality with the above inequality, one has 
\begin{align}\label{eqj82}
\begin{aligned}
& \left\|\nabla^{2} \phi(t)\right\|^{2}+\int_{0}^{t}\left(\left\|\nabla^{2} \phi\right\|^{2}+\left\|\nabla^{3} \zeta\right\|^{2}+\left\|\left.\partial_{x^{\prime}}^{2} \zeta^{\prime}\right|_{x_{1}=0}\right\|_{L^{2}\left(\mathbb{T}^{2}\right)}^{2}\right) d \tau \\
\leq & \int_{0}^{t}\left(\left\|\nabla \partial_{\tau} \zeta\right\|^{2}+\left\|\nabla^{2} \zeta\right\|^{2}+\left\|\partial_{x^{\prime}} \phi\right\|^{2}\right) d \tau+C\left(E^{2}(0)+M^{2}(t)+\delta \right) .
\end{aligned}
\end{align}
The combination \eqref{eqj77} and \eqref{eqj82} yields
\begin{align}\label{eqj83}
\begin{aligned}
& \left\|\nabla^{2}(\phi, \zeta)(t)\right\|^{2}+\left\|\left.\partial_{x^{\prime}} \zeta^{\prime}\right|_{x_{1}=0}\right\|_{L^{2}\left(\mathbb{T}^{2}\right)}^{2}+\int_{0}^{t}\left(\left\|\nabla^{2} \phi\right\|^{2}+\left\|\nabla^{3} \zeta\right\|^{2}+\left\|\left.\partial_{x^{\prime}}^{2} \zeta^{\prime}\right|_{x_{1}=0}\right\|_{L^{2}\left(\mathbb{T}^{2}\right)}^{2}\right) d \tau \\
\leq & C\left(\left\|\partial_{t} \zeta(t)\right\|^{2}+\|\nabla(\phi, \zeta)(t)\|^{2}+\left\|\left.\zeta^{\prime}\right|_{x_{1}=0}(t)\right\|_{L^{2}\left(\mathbb{T}^{2}\right)}^{2}\right)\\
&\quad+C \int_{0}^{t}\left(\left\|\nabla \partial_{\tau} \zeta\right\|^{2}+\left\|\nabla^{2} \zeta\right\|^{2}+\left\|\partial_{x^{\prime}} \phi\right\|^{2}\right) d \tau+C\left(E^{2}(0)+M^{2}(t)+\delta \right) .
\end{aligned}
\end{align}
By \eqref{eq42}$_{1}$ and \eqref{eqj1}$_{1}$, one has
$$
\int_{0}^{t}\left\|\nabla \partial_{\tau} \phi\right\|^{2} d \tau \leq C \int_{0}^{t}\left\|\nabla^{2}(\phi, \zeta)\right\|^{2} d \tau+C(\chi+\delta ) \int_{0}^{t}\|\nabla(\phi, \zeta)\|_{H^{1}}^{2} d \tau+C \delta \int_{0}^{t}\left\| \left(\phi, \zeta_{1}\right)\right\|^{2} d \tau.
$$
Multiplying \eqref{eqj83} by a large constant $C$, then combining with the above inequality, one has 
\begin{align}\label{eqj85}
\begin{aligned}
& \quad\left\|\nabla^{2}(\phi, \zeta)(t)\right\|^{2}+\left\|\left.\partial_{x^{\prime}} \zeta^{\prime}\right|_{x_{1}=0}(t)\right\|_{L^{2}\left(\mathbb{T}^{2}\right)}^{2}+\int_{0}^{t}\left(\left\|\nabla \partial_{\tau} \phi\right\|^{2}+\left\|\nabla^{2} \phi\right\|^{2}+\left\|\nabla^{3} \zeta\right\|^{2}\right) d \tau \\
& \qquad\qquad+\int_{0}^{t}\left\|\left.\partial_{x^{\prime}}^{2} \zeta^{\prime}\right|_{x_{1}=0}\right\|_{L^{2}\left(\mathbb{T}^{2}\right)}^{2} d \tau \\
& \leq C\left(\left\|\partial_{t} \zeta(t)\right\|^{2}+\|\nabla(\phi, \zeta)(t)\|^{2}+\left\|\left.\zeta^{\prime}\right|_{x_{1}=0}(t)\right\|_{L^{2}\left(\mathbb{T}^{2}\right)}^{2}\right) \\
& \quad+C \int_{0}^{t}\left(\left\|\nabla \partial_{\tau} \zeta\right\|^{2}+\left\|\nabla^{2} \zeta\right\|^{2}+\left\|\partial_{x^{\prime}} \phi\right\|^{2}\right) d \tau+C\left(E^{2}(0)+M^{2}(t)+\delta \right) .
\end{aligned}
\end{align}
Multiplying \eqref{eqj84} by a large constant $C$, and using \eqref{eqj85}, one can obtain
\begin{align}\label{eqj86}
\begin{aligned}
& \left\|\partial_{t}(\phi, \zeta)(t)\right\|^{2}+\|\nabla(\phi, \zeta)(t)\|_{H^{1}}^{2}+\left\|\left.\zeta^{\prime}\right|_{x_{1}=0}(t)\right\|_{L^{2}\left(\mathbb{T}^{2}\right)}^{2}+\left\|\left.\partial_{x^{\prime}} \zeta^{\prime}\right|_{x_{1}=0}(t)\right\|_{L^{2}\left(\mathbb{T}^{2}\right)}^{2} \\
& \quad+\int_{0}^{t}\left(\left\|\left.\partial_{\tau} \zeta^{\prime}\right|_{x_{1}=0}\right\|_{L^{2}\left(\mathbb{T}^{2}\right)}^{2}+\left\|\left.\partial_{x^{\prime}} \zeta^{\prime}\right|_{x_{1}=0}\right\|_{L^{2}\left(\mathbb{T}^{2}\right)}^{2}+\left\|\left.\partial_{x^{\prime}}^{2} \zeta^{\prime}\right|_{x_{1}=0}\right\|_{L^{2}\left(\mathbb{T}^{2}\right)}^{2}\right) d \tau \\
& \quad+\int_{0}^{t}\left(\left\|\partial_{\tau}(\phi, \zeta)\right\|_{H^{1}}^{2}+\|\nabla \phi\|_{H^{1}}^{2}+\left\|\nabla^{2} \zeta\right\|_{H^{1}}^{2}\right) d \tau \\
& \leq C\|\phi(t)\|^{2}+C \int_{0}^{t}\left(\|\nabla \zeta\|^{2}+\left\|\left.\zeta^{\prime}\right|_{x_{1}=0}\right\|_{L^{2}\left(\mathbb{T}^{2}\right)}^{2}\right) d \tau+C\left(E^{2}(0)+M^{2}(t)+\delta\right) .
\end{aligned}
\end{align}
Then multiplying \eqref{C27} by a large constant $C$, then adding it with \eqref{eqj86}, one has
\begin{align}\label{eqj92}
\begin{aligned}
& \left\|\partial_{t}(\phi, \zeta)(t)\right\|^{2}+\|(\phi, \zeta)(t)\|_{H^{2}}^{2}+\left\|\left.\zeta^{\prime}\right|_{x_{1}=0}(t)\right\|_{L^{2}\left(\mathbb{T}^{2}\right)}^{2}+\left\|\left.\partial_{x^{\prime}} \zeta^{\prime}\right|_{x_{1}=0}(t)\right\|_{L^{2}\left(\mathbb{T}^{2}\right)}^{2} \\
& \quad+\int_{0}^{t}\left(
\left\|\left.\zeta^{\prime}\right|_{x_{1}=0}\right\|_{L^{2}\left(\mathbb{T}^{2}\right)}^{2}+\left\|\left.\partial_{\tau} \zeta^{\prime}\right|_{x_{1}=0}\right\|_{L^{2}\left(\mathbb{T}^{2}\right)}^{2}+\left\|\left.\partial_{x^{\prime}} \zeta^{\prime}\right|_{x_{1}=0}\right\|_{L^{2}\left(\mathbb{T}^{2}\right)}^{2}+\left\|\left.\partial_{x^{\prime}}^{2} \zeta^{\prime}\right|_{x_{1}=0}\right\|_{L^{2}\left(\mathbb{T}^{2}\right)}^{2}\right) d \tau\\
&\quad+\int_{0}^{t}\left(\left\|\partial_{\tau}(\phi, \zeta)\right\|_{H^{1}}^{2}+\|( \phi, \nabla\zeta)\|_{H^{2}}^{2}\right) d \tau 
\le C\left(E^{2}(0)+M^{2}(t)+\delta\right).
\end{aligned}
\end{align}
 Finally, combining the above inequality with \eqref{C30} and choosing $\chi+\delta$ suitably small, we can obtain \eqref{eqj90} and complete the proof of 
  Proposition \ref{Cproposition}.

\section{Proof of Theorem \ref{thm1}}\label{sc4}
Now we finish the proof of the main result in Theorem \ref{thm1}.
The global existence result follows directly from Proposition \ref{Cproposition} (A priori estimates) and local existence which can be obtained similarly as in \cite{solo}. To finish the proof of Theorem \ref{thm1}, we only need to justify the time-asymptotic behavior \eqref{eq9}. In fact, according to the estimates \eqref{eqj90}, it yields
$$
\int_{0}^{\infty}\left(\|\nabla(\phi, \zeta)\|^{2}+\left|\frac{d}{d \tau}\|\nabla(\phi, \zeta)\|^{2}\right|\right) d \tau<\infty,
$$
which implies
\begin{align}\label{eqj93}
\lim _{t \rightarrow +\infty}\|\nabla(\phi, \zeta)(t)\|^{2}=0.
\end{align}

By Lemma \ref{Lem-GN}, it holds 
$$
\|(\phi, \zeta)(t)\|_{L^{\infty}}^{2} \leq C\|(\phi, \zeta)(t)\|\left\|\nabla(\phi, \zeta)(t)\right\|+C\|\nabla(\phi, \zeta)(t)\|\left\|\nabla^{2}(\phi, \zeta)(t)\right\|,
$$
which together with \eqref{eqj90} and \eqref{eqj93} yields
\begin{align}\label{eqj94}
\lim _{t \rightarrow +\infty}\|(\phi, \zeta)(t)\|_{L^{\infty}}=0.
\end{align}
Hence, by \eqref{eqj94} and $(iii)$ in Lemma \ref{lem1}, one can obtain 
 \eqref{eq9} and proof of Theorem \ref{thm1} is completed.


\

\textbf{Competing interests} \
The authors do not have any other competing interests to declare.

\

\textbf{Data availability} \
This article does not have any external supporting data.

\

\textbf{Publisher's Note} \ 
Springer Nature remains neutral with regard to jurisdictional claims in published maps and institutional affiliations.

		\vspace{1.5cm}
		
\end{document}